\titlespacing{\section}{0cm}{3.5pc}{1.5pc}
\def\@citex[#1]#2{\if@filesw\immediate\write\@auxout{\string\citation{#2}}\fi
  \def\@citea{}\@cite{\@for\@citeb:=#2\do
    {\@citea\def\@citea{\@citesep}\@ifundefined
       {b@\@citeb}{{\bf ?}\@warning
       {Citation `\@citeb' on page \thepage \space undefined}}%
{\csname b@\@citeb\endcsname}}}{#1}}
\def\@citesep{; }
\newtheoremstyle{Kang}{}{}{\itshape}{}{\bf}{}{.5em}{}
\theoremstyle{Kang}
\newtheorem{theorem}{Theorem}[section]
\newtheorem{lemma}[theorem]{Lemma}
\newtheoremstyle{Kremark}{}{}{}{}{\bf}{}{.5em}{}
\theoremstyle{Kremark}
\newtheorem{defn}[theorem]{Definition}
\newtheorem{example}[theorem]{Example}
\newtheorem{other}{}
\newenvironment{idef}[1]{\begin{other}}{\end{other}}
\newenvironment{Case}[1]{\medskip {\it Case #1.}}{}
\def\fn#1{\operatorname{#1}} % function work like \sin
\def\bm#1{\mathbbm{#1}}
\title{Frobenius Groups and Retract Rationality}
\author{Ming-chang Kang \\[3mm]
Department of Mathematics and \\ Taida Institute of Mathematical Sciences,\\
National Taiwan University \\ Taipei, Taiwan \\
E-mail: kang@math.ntu.edu.tw}
\date{}
\begin{document}

\maketitle

\footnote{\textit{\!\!\!$2010$ Mathematics Subject
Classification}. Primary 13A50, 14E08, 12F12, 20J06.}
\footnote{\textit{\!\!\!Keywords and phrases}. Noether's problem,
the inverse Galois problem, retract rationality, Frobenius
groups.} \footnote{\textit{\!\!\!}Partially supported by National
Center for Theoretic Sciences (Taipei Office).}

\begin{abstract}
{\noindent\bf Abstract.} Let $k$ be any field, $G$ be a finite
group acting on the rational function field $k(x_g:g\in G)$ by
$h\cdot x_g=x_{hg}$ for any $h,g\in G$. Define $k(G)=k(x_g:g\in
G)^G$. Noether's problem asks whether $k(G)$ is rational (= purely
transcendental) over $k$. A weaker notion, retract rationality
introduced by Saltman, is also very useful for the study of
Noether's problem. We prove that, if $G$ is a Frobenius group with
abelian Frobenius kernel, then $k(G)$ is retract $k$-rational for
any field $k$ satisfying some mild conditions. As an application,
we show that, for any algebraic number field $k$, for any
Frobenius group $G$ with Frobenius complement isomorphic to
$SL_2(\bm{F}_5)$, there is a Galois extension field $K$ over $k$
whose Galois group is isomorphic to $G$, i.e. the inverse Galois
problem is valid for the pair $(G,k)$. The same result is true for
any non-solvable Frobenius group if $k(\zeta_8)$ is a cyclic
extension of $k$.

\end{abstract}

\newpage
%-----------------------------------S1
\section{Introduction}

Let $k$ be any field and $G$ be a finite group acting on the
rational function field $k(x_g:g\in G)$ by $k$-automorphisms so
that $g\cdot x_h=x_{gh}$ for any $g,h\in G$. Denote by $k(G)$ the
fixed field $k(x_g:g\in G)^G$. Noether's problem asks whether
$k(G)$ is rational (= purely transcendental) over $k$. It is
related to the inverse Galois problem, to the existence of generic
$G$-Galois extensions over $k$, and to the existence of versal
$G$-torsors over $k$-rational field extensions \cite[33.1,
p.~86]{Sw,Sa1,GMS}. Noether's problem for abelian groups was
studied by Swan, Voskresenskii, Endo, Miyata and Lenstra, etc. The
reader is referred to Swan's paper for a survey of this problem
\cite{Sw}.

On the other hand, just a handful of results about Noether's
problem for non-abelian groups are obtained.

Before stating the main results of this paper,
we recall a notion introduced by Saltman,
retract rationality, which is weaker than rationality, but is still very useful.

%-------------------d1.1
\begin{defn} \label{d1.1}
Let $k\subset K$ be an extension of fields. $K$ is rational over
$k$ (for short, $k$-rational) if $K$ is purely transcendental over
$k$. $K$ is stably $k$-rational if $K(y_1,\ldots,y_m)$ is rational
over $k$ for some $y_1,\ldots,y_m$ such that $y_1,\ldots,y_m$ are
algebraically independent over $K$.

When $k$ is an infinite field, $K$ is said to be retract
$k$-rational if there is a $k$-algebra $A$ contained in $K$ such
that (i) $K$ is the quotient field of $A$, (ii) there exist a
non-zero polynomial $f\in k[X_1,\ldots,X_n]$ (where
$k[X_1,\ldots,X_n]$ is the polynomial ring) and $k$-algebra
morphisms $\varphi\colon A\to k[X_1,\ldots,X_n][1/f]$ and
$\psi\colon k[X_1,\ldots,X_n][1/f]\to A$ satisfying
$\psi\circ\varphi =1_A$. See \cite{Sa2,Ka4} for details.

It is not difficult to see that ``$k$-rational" $\Rightarrow$
``stably $k$-rational" $\Rightarrow$ ``retract $k$-rational". Note
that $k(G)$ is retract $k$-rational is equivalent to the existence
of a generic $G$-Galois extension over $k$ \cite[Theorem
1.2]{Sa1,Sa2,Ka4}.
\end{defn}

The following result ensures that $k(G)$ is retract $k$-rational
is as good as Noether's problem for solving the inverse Galois
problem. The reader is referred to Serre's monograph \cite{Se} for
Hilbert's irreducibility theorem and the inverse Galois problem.

\medskip
\begin{theorem} \label{t1.12}
Let $G$ be a finite group, and $k$ be an infinite field such that
Hilbert's irreducibility theorem is valid $($e.g. $k$ is an
algebraic number field$)$. If $k(G)$ is retract $k$-rational, then
there is a Galois extension field $K$ over $k$ whose Galois group
is isomorphic to $G$.
\end{theorem}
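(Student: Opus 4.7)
The plan is to combine the equivalence ``$k(G)$ retract $k$-rational $\Leftrightarrow$ existence of a generic $G$-Galois extension'' (cited in Definition \ref{d1.1}) with Hilbert's irreducibility theorem; the two ingredients together are tailor-made to produce an honest $G$-Galois field extension of $k$.

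First I would invoke the hypothesis together with \cite{Sa1,Sa2,Ka4} to produce a generic $G$-Galois extension $R/S$, where $S = k[X_1,\ldots,X_n][1/f]$ for some non-zero $f\in k[X_1,\ldots,X_n]$ and $R$ is a $G$-Galois \'etale $S$-algebra. By inspecting the construction in that reference, $R$ may be taken to be a domain: it is built as a $G$-stable subring of the standard $G$-Galois field extension $k(x_g:g\in G)/k(G)$, so it sits inside a field.

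Next, I would pass to fraction fields and look at the generic fibre $R\otimes_S F$, where $F:=\mathrm{Frac}(S) = k(X_1,\ldots,X_n)$. Because $R$ is a $G$-Galois $S$-algebra which is moreover a domain, $R\otimes_S F$ is a $G$-Galois \emph{field} extension of $F$; that is, $E:=\mathrm{Frac}(R)$ is Galois over $F$ with Galois group equal to the full group $G$. I would then apply Hilbert's irreducibility theorem to the Galois extension $E/F$: since $k$ is Hilbertian, there is a specialization $(X_1,\ldots,X_n)\mapsto(a_1,\ldots,a_n)\in k^n$ with $f(a_1,\ldots,a_n)\neq 0$ for which the residue algebra $R\otimes_S k$ (formed along the $k$-point $X_i\mapsto a_i$ of $\mathrm{Spec}\,S$) remains a field and is Galois over $k$ with group $G$. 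This residue extension is the desired $K$.

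The delicate step is guaranteeing that the generic fibre of $R/S$ is a field with Galois group genuinely $G$, rather than a product of fields on which $G$ merely permutes the factors (which would only produce a Galois extension with some proper subgroup of $G$). This rests on the connectedness of $R$, which, as noted, comes for free from the concrete realization inside $k(x_g:g\in G)$. Once this is in place, the remainder is a routine application of Hilbert's theorem, so the only real work is identifying the generic $G$-Galois extension supplied by Saltman's theorem with an extension to which Hilbert irreducibility can be applied.
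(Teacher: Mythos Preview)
Your argument is correct, but it is packaged differently from the paper's. The paper does not invoke the full equivalence ``$k(G)$ retract $k$-rational $\Leftrightarrow$ existence of a generic $G$-Galois extension'' as a black box. Instead it works directly with the retract rationality data: starting from the tautological $G$-Galois field extension $E=k(x_g:g\in G)$ over $F=k(G)$, it picks the affine model $A\subset F$ together with the maps $\varphi:A\to k[X_1,\ldots,X_n][1/f]$ and $\psi$ with $\psi\circ\varphi=1_A$, takes the integral closure $B$ of $A$ in $E$ (so $B/A$ is $G$-Galois after localization), writes $E=F(\theta)$ with minimal polynomial $g(T)\in A[T]$, and pushes $g$ through $\varphi$ to obtain $h(T)\in k(X_1,\ldots,X_n)[T]$. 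The retraction $\psi$ is exactly what forces $h$ to stay irreducible, and Hilbert's theorem is then applied to the single polynomial $h$. In effect the paper re-derives in situ the small piece of Saltman's theorem that is needed, whereas you quote the whole theorem and then specialise. Your route is shorter on the page but leans on a heavier citation; the paper's route is more self-contained.

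One comment on the connectedness step: your justification ``by inspecting the construction'' is acceptable if you really do look at Saltman's proof, but you can also argue abstractly without opening the reference. Since $G$-Galois \emph{field} extensions of some overfield of $k$ certainly exist (e.g.\ $k(x_g:g\in G)/k(G)$ itself), genericity forces some specialisation of $R/S$ to be a field; but over a connected base $S$, a finite \'etale algebra with one connected fibre is already connected. That removes the appeal to the internal details of \cite{Sa1}.
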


\begin{proof}
Write $E=k(x_g:g\in G)$ and $F=E^G=k(G)$. Since $k(G)$ is retract
$k$-rational, there is $k$-algebra $A$ in $F$ such that the
quotient field of $A$ is $F$, and the $k$-algebra morphisms
$\varphi\colon A\to k[X_1,\ldots,X_n][1/f]$ and $\psi\colon
k[X_1,\ldots,X_n][1/f]\to A$ satisfying $\psi\circ\varphi =1_A$.

Let $B$ be the integral closure of $A$ in $E$. Write $E=F(\theta)$
for some primitive element $\theta$. We may assume that $\theta
\in B$ and $B$ is a Galois extension of $A$ with group $G$ (see
\cite[Section 3]{Sw} for details).

Since $A[\theta]$ and $B$ have the same quotient field, there
exist elements $s, t \in A$ such that $A[\theta][1/s]=B[1/t]$ by
\cite[Lemma 8]{Sw1}. Further localization will ensure that
$A[\theta]=B$. In short, we have a Galois extension $B$ over $A$
and the $k$-algebra morphisms $\varphi\colon A\to
k[X_1,\ldots,X_n][1/f]$ and $\psi\colon k[X_1,\ldots,X_n][1/f]\to
A$ such that $B=A[\theta]$.

Let $g(T) \in A[T]$ be the minimum polynomial of $\theta$ over $F$
where $T$ is a variable which is algebraic independent over
$k(X_1,\ldots,X_n)$ (and over $F$ via the injective morphism
$\varphi$). Write $g(T)=T^m+a_1T^{m-1}+ \cdots +a_m$ for some
elements $a_1, \cdots, a_m \in A$ with $m=|G|$. Note that $g(T)$
is an irreducible polynomial in $F[T]$. Define
$h(T)=T^m+\varphi(a_1)T^{m-1}+ \cdots +\varphi(a_m)$. We claim
that $h(T) \in k(X_1,\ldots,X_n)[T]$ is an irreducible polynomial
over $k(X_1,\ldots,X_n)$ ; otherwise, applying the morphism $\psi$
would lead to a contradiction.

Now we may apply Hilbert's irreducibility theorem. There is a
$k$-specialization $\lambda: k[X_1,\ldots,X_n][1/f] \to k$ such
that $T^m+\lambda(\varphi(a_1))T^{m-1}+ \cdots
+\lambda(\varphi(a_m))$ is irreducible in $k[T]$. Thus $\lambda
\circ \varphi: A \to k$ is the $k$-specialization we need. The
remaining proof is similar to \cite[Section 3]{Sw}.
\end{proof}

Now we turn to Frobenius groups.

%---------------------d1.5
\begin{defn}[{\cite[pages 181--182]{Is}}] \label{d1.5}
A finite group $G$ is called a Frobenius group if $G=N\rtimes G_0$
where $N$ and $G_0$ are non-trivial subgroups of $G$ satisfying
(i) $N$ is a normal subgroup of $G$, and (ii) the action of $G_0$
on $N$ is fixed point free, i.e.\ for any $x\in N\backslash
\{1\}$, any $g\in G_0\backslash \{1\}$, $gx g^{-1}\ne x$.

In other words, a group $G$ is a Frobenius group if and only if
the celebrated Frobenius Theorem in representation theory is valid
for $G$ \cite[Theorem 8.5.5, page 243]{Ro}. In this situation, the
normal subgroup $N$ is called the Frobenius kernel of $G$, and the
subgroup $G_0$ (or any of its conjugates in $G$) is called a
Frobenius complement of $G$.

A group is called a Frobenius complement (resp. a Frobenius
kernel) if it is a Frobenius complement (resp. a Frobenius kernel)
of some Frobenius group.
\end{defn}

%----------------------t1.6
\begin{theorem}[{\cite[page 299; Is, Chapter 6]{Ro}}] \label{t1.6}
Let $G=N\rtimes G_0$ be a Frobenius group with kernel $N$ and
complement $G_0$.

{\rm (1)} $(John \, Thompson)$ $N$ is a nilpotent group.

{\rm (2)} $(Burnside)$ The $p$-Sylow subgroups of $G_0$ are cyclic
if $p\ge 3$, while the $2$-Sylow subgroups of $G_0$ are cyclic or
generalized quaternion.
\end{theorem}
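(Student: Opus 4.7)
The plan is to treat the two classical assertions independently. For part (1), the key reduction is that conjugation by any element $g\in G_0$ restricts to an automorphism $\sigma_g$ of $N$, and hypothesis (ii) of Definition \ref{d1.5} says precisely that $\sigma_g$ fixes no non-trivial element of $N$ whenever $g\ne 1$. Choosing $g\in G_0$ of prime order yields a fixed-point-free automorphism of $N$ of prime order, and I would then invoke Thompson's 1959 thesis result: any finite group admitting such an automorphism is nilpotent. Applied to $\sigma_g$, this gives $N$ nilpotent. The main work here is Thompson's theorem itself, which rests on delicate $p$-local and character-theoretic machinery; I would defer to the standard treatment, e.g.\ in Isaacs, Chapter 6.

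For part (2), my plan is a two-step reduction. First, I would show that every abelian subgroup $A\le G_0$ is cyclic; the conclusion then follows from the classical structure theorem that a finite $p$-group in which all abelian subgroups are cyclic is itself cyclic (for $p$ odd) or cyclic or generalized quaternion (for $p=2$), applied to each Sylow subgroup of $G_0$. To carry out the first step, I use part (1): $N$ is nilpotent, hence $Z(N)\ne 1$. Fix a prime $p$ dividing $|Z(N)|$ and let $M\le Z(N)$ be the subgroup of elements of order dividing $p$. Then $M$ is an elementary abelian $p$-group, characteristic in $N$, and the conjugation action of $G_0$ on the $\bm{F}_p$-vector space $M$ is fixed-point-free on $M\setminus\{0\}$, and in particular faithful. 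Extending scalars to $\overline{\bm{F}}_p$ and decomposing $M\otimes\overline{\bm{F}}_p$ into $A$-eigenlines, pick an eigenline $L$; the character $\chi\colon A\to\overline{\bm{F}}_p^\times$ through which $A$ acts on $L$ must be injective, since any $a\in\ker\chi$ has eigenvalue $1$ on $L$ and hence on $M$ itself (as $1\in\bm{F}_p$), giving a non-trivial fixed vector in $M$ unless $a=1$. Thus $A$ embeds into the roots of unity $\overline{\bm{F}}_p^\times$, so $A$ is cyclic.

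The main obstacle throughout is Thompson's theorem in (1); the argument I have sketched for (2) is essentially elementary representation theory once (1) is in hand, combined with the standard classification of $p$-groups whose abelian subgroups are all cyclic. Since the theorem is cited from \cite{Ro,Is}, my intention in the write-up would simply be to point to these references rather than reproduce either the Thompson nilpotence proof or the $p$-group classification.
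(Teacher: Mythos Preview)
The paper supplies no proof of Theorem~\ref{t1.6}; it merely records the statement with the attributions to Thompson and Burnside and the citations to \cite{Ro} and \cite{Is}, precisely as your last sentence proposes to do. Your accompanying sketch is also correct and follows the standard line (the phrase ``decomposing $M\otimes\overline{\bm{F}}_p$ into $A$-eigenlines'' slightly overstates what is needed---a single common eigenvector for the commuting family $A$ suffices and always exists over an algebraically closed field---but this does not affect the argument).
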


Frobenius groups and Frobenius complements are ubiquitous in
various mathematical areas (see Section 2). We will coin some
names for these groups.

%------------------d2.2
\begin{defn} \label{d2.2}
Following the terminology used by Suzuki \cite{Su}, a finite group
$G$ is called a $Z$-group if all of its Sylow subgroups are
cyclic. Imitating this usage, we called a finite $G$ a $GZ$-group
if the $p$-Sylow subgroups of $G$ are cyclic for $p\ge 3$ and the
2-Sylow subgroups of $G$ are cyclic or generalized quaternion.

From Theorem \ref{t1.6}, we find that the Frobenius complements
are $GZ$-groups.
\end{defn}

The purpose of this paper is to study the rationality problems of
Frobenius groups and Frobenius complements. But let us discuss an
example first.

%-----------------------ex1.12
\begin{example} \label{ex1.12}
Let $p$, $q$ be distinct odd prime numbers. Define $G=\langle
\sigma,\tau: \sigma^p=\tau^q=1,~
\tau\sigma\tau^{-1}=\sigma^r\rangle$ where $2\le r\le p \, - \,1$
and $r^q\equiv 1$ (mod $p$). Then $G$ is a Frobenius group and
$G\simeq C_p \rtimes C_q$. By Theorem \ref{t3.4}, if
$\bm{Z}[\zeta_q]$ is a unique factorization domain, then
$\bm{C}(C_p\rtimes C_q)$ is rational over $\bm{C}$. However, when
$\bm{Z}[\zeta_q]$ is not a unique factorization domain, it is
still unknown whether $\bm{C}(C_p\rtimes C_q)$ is rational or not.
\end{example}

Because of the above example, we will study the retract
rationality of $k(G)$ where $k$ is a field satisfying mild
assumptions, and $G$ is a $GZ$-group, a Frobenius complement or a
Frobenius group with abelian kernel. We will not aim at the
rationality problem for these groups in this paper. The
rationality problem of $\bm{Q}(G)$ where
$G=(\bm{Z}/p\bm{Z})\rtimes (\bm{Z}/p\bm{Z})^{\times}$ was indeed
explored by Samson Breuer \cite{Bre} (when $p$ is a prime number
with $p\le 19$); we will study the rationality problem for these
groups in a separate paper.

The retract rationality for a Frobenius group with abelian kernel
is a consequence of that for $GZ$-groups (for the details about
the results for $GZ$-groups, see Section 4). We list our results
for Frobenius groups as follows.

%----------------------t1.9
\begin{theorem} \label{t1.9}
Let $G$ be a solvable Frobenius group of exponent $e=2^u3^ln$
$($where $u,l\ge 0$, $2\nmid n$, $3\nmid n)$, and $k$ be an
infinite field with $\fn{char}k\ne 2,3$, and $\zeta_{2^{u'}},
\zeta_{3^l} \in k$ where $u'=\max\{u,3\}$. If the Frobenius kernel
of $G$ is an abelian group, then $k(G)$ is retract $k$-rational.
\end{theorem}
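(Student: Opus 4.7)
My plan is to reduce the statement to the retract rationality of $k(G_0)$, where $G_0$ denotes a Frobenius complement of $G$. Since $G$ is solvable, so is $G_0$, and by Theorem \ref{t1.6}(2) $G_0$ is a $GZ$-group. Its exponent divides $e = 2^u 3^l n$, so the hypothesis that $\zeta_{2^{u'}}$ and $\zeta_{3^l}$ lie in $k$ supplies exactly the roots of unity required to apply the $GZ$-group result developed in Section 4 of the paper, giving retract $k$-rationality of $k(G_0)$.

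The main work is then the reduction from $k(G)$ to $k(G_0)$. Writing $V = \bigoplus_{g \in G} k \cdot x_g$, the bijection $G \leftrightarrow N \times G_0$ induces a $k$-linear identification $V \cong k[N] \otimes_k k[G_0]$ under which $N$ acts by left regular action on the first factor and trivially on the second, while $G_0$ acts by conjugation on the first factor and by left regular action on the second. I would first compute $k(V)^N$: since $V|_N$ is $|G_0|$ copies of the regular representation of the abelian group $N$, the no-name lemma (in its multiplicative form, using dual characters of $N$) should produce a $G_0$-equivariant isomorphism of $k(V)^N$ with a purely transcendental extension of $k(G_0)$ on which $G_0$ acts monomially via its induced action on $\hat{N}$.

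Next I would take $G_0$-invariants. The Frobenius condition implies that $G_0$ acts freely on $\hat{N} \setminus \{1\}$, so the monomial action of $G_0$ on the transcendence generators decomposes into free orbits of size $|G_0|$. Using an appropriate form of Saltman's retract-rationality reductions for semidirect products together with the no-name lemma, one deduces that $k(G) = (k(V)^N)^{G_0}$ is retract $k$-rational over $k(G_0)$. Combined with the retract $k$-rationality of $k(G_0)$ from Section 4, this yields retract $k$-rationality of $k(G)$.

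The principal obstacle is the bookkeeping of roots of unity. The hypothesis provides only $\zeta_{2^{u'}}$ and $\zeta_{3^l}$, whereas diagonalizing the $N$-action naively would call for $\zeta_{|N|}$. The resolution must exploit the free $G_0$-orbit structure on $\hat{N} \setminus \{1\}$ to absorb the needed character values into the $G_0$-module structure, so that the characters comprising each orbit are jointly defined over $k$ without requiring individual primitive roots of unity of order dividing $|N|$. Verifying that the cocycles appearing in this descent are either trivial or land in a piece that is itself retract-rational is, in my view, the technical crux of the argument.
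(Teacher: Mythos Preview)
Your reduction strategy is correct in outline, but you are making the reduction step much harder than necessary, and the ``principal obstacle'' you flag does not actually arise in the paper's argument.

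The paper simply invokes Saltman's black-box result, Theorem~\ref{t1.11}(2): if $G=N\rtimes G_0$ with $N$ abelian normal, $\gcd\{|N|,|G_0|\}=1$, and both $k(N)$ and $k(G_0)$ are retract $k$-rational, then so is $k(G)$. Coprimality is supplied by Theorem~\ref{t2.1}. Retract rationality of $k(G_0)$ comes from Theorem~\ref{t4.8}, exactly as you say. The only remaining input is retract rationality of $k(N)$, and this is immediate from Theorem~\ref{t3.1}: for an abelian group $N$ of exponent $2^r s$ with $s$ odd, $k(N)$ is retract $k$-rational as soon as $k(\zeta_{2^r})$ is cyclic over $k$. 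No odd-order roots of unity are required at all. Since $\zeta_{2^{u'}}\in k$ with $u'\ge u\ge r$, this hypothesis is automatically satisfied.

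Your alternative route --- diagonalizing the $N$-action on $V$ explicitly via characters and then tracking the induced $G_0$-action on $\hat N$ --- would indeed require $\zeta_{\exp(N)}\in k$ to carry out as written, and the proposed workaround (using the free $G_0$-orbits on $\hat N\setminus\{1\}$ to descend the character values jointly) is vague and not obviously correct; you have, in effect, identified a real gap in your own argument. But this entire detour is unnecessary: Theorems~\ref{t3.1} and~\ref{t1.11} already encapsulate whatever descent is needed, and the hypotheses of Theorem~\ref{t1.9} are tailored precisely so that both apply. Note also that the fixed-point-free condition itself is not used beyond obtaining $\gcd\{|N|,|G_0|\}=1$; the free orbit structure on $\hat N\setminus\{1\}$ plays no role.
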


%-----------------------t1.8
\begin{theorem} \label{t1.8}
Let $G$ be a non-solvable Frobenius group. If $k$ is an infinite
field with $\fn{char}k=2$ or $\fn{char}k=0$ such that $k(\zeta_8)$
is a cyclic extension of $k$, then $k(G)$ is retract $k$-rational.
\end{theorem}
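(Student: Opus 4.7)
The plan is to reduce Theorem \ref{t1.8} to the abelian-kernel setting by exploiting the very restrictive structure of non-solvable Frobenius groups, and then to run a variant of the argument behind Theorem \ref{t1.9} under the present, weaker hypotheses on $k$.

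The first step is Zassenhaus's classification of non-solvable Frobenius complements: such a complement $G_0$ is isomorphic to $SL_2(\bm{F}_5) \times M$, where $M$ is a solvable $Z$-group of order coprime to $30 = 2\cdot 3\cdot 5$. In particular the central involution of $SL_2(\bm{F}_5)$ embeds as a central involution $z \in G_0$ acting fixed-point-freely on the Frobenius kernel $N$. Since any fixed-point-free involutory automorphism of a finite group inverts every element, $N$ is abelian. Thus every non-solvable Frobenius group automatically has abelian Frobenius kernel, so Theorem \ref{t1.8} is an assertion about the same kind of group as Theorem \ref{t1.9}, but over fields with fundamentally different arithmetic at the prime $2$ (and possibly at the prime $3$).

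The second step is to mimic the proof of Theorem \ref{t1.9}. Decompose $N = \bigoplus_p N_p$ into its $G_0$-invariant Sylow subgroups, and use the standard ``no-name'' reduction to present $k(G)$ as stably $k$-isomorphic to $k(G_0)(V)^{G_0}$ for a suitable faithful $\bm{Z}[G_0]$-lattice $V$ assembled from the $N_p$. Retract $k$-rationality of $k(G)$ is then reduced to two inputs: (a) retract $k$-rationality of $k(G_0)$, supplied by the $GZ$-group results of Section 4 under the hypothesis that $k(\zeta_8)/k$ is cyclic; and (b) a module-theoretic criterion (of Saltman/Endo--Miyata type) ensuring that $k(G_0)(V)^{G_0}$ is retract $k(G_0)$-rational. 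For every odd prime $p$ dividing $|G_0|$, the $p$-Sylow of $G_0$ is cyclic by Theorem \ref{t1.6}, the corresponding cyclotomic extension of $k$ is automatically cyclic, and the contribution of $N_p$ is handled exactly as in Theorem \ref{t1.9}.

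The decisive new point, and the main obstacle, is the $2$-primary piece. The Sylow $2$-subgroup of $SL_2(\bm{F}_5)$ is the quaternion group $Q_8$, which is generalized quaternion but non-cyclic, and it is precisely for such Sylow $2$-subgroups that the hypothesis $\zeta_{2^u}\in k$ appearing in Theorem \ref{t1.9} cannot simply be dropped. The hypothesis that $k(\zeta_8)/k$ be cyclic plays the role of a Saltman/Grunwald--Wang condition at the prime $2$, killing the relevant $H^2$-obstruction and allowing the $Q_8$-contribution to $V$ to satisfy criterion (b). In characteristic $2$ the Kummer-theoretic ingredients are replaced by their Artin--Schreier analogues, but the structure of the argument is unchanged; once the $2$-primary obstacle is overcome, the odd-primary pieces and retract $k$-rationality of $k(G_0)$ combine via the no-name reduction to give retract $k$-rationality of $k(G)$.
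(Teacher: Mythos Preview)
Your proposal has two genuine gaps.

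First, your classification of non-solvable Frobenius complements is incomplete. It is not true that every such $G_0$ is of the form $SL_2(\bm{F}_5)\times M$ with $M$ a $Z$-group of order prime to $30$; that is only type (I) of the Suzuki--Zassenhaus list (Theorem \ref{t2.8} with $p=5$, via Theorem \ref{t2.9}). There is a second family, type (II), in which an extra element $\lambda$ of order $4$ acts on $L\simeq SL_2(\bm{F}_5)$ by an outer automorphism, and the subgroup $\langle \lambda,L\rangle$ is isomorphic to $\widehat{S}_5$ (Lemma \ref{l4.10}). Your argument as written does not touch these complements, and it is precisely for $\widehat{S}_5$ that the hypothesis ``$k(\zeta_8)/k$ cyclic'' is indispensable (Example \ref{ex4.13}); the paper handles this case separately through Theorem \ref{t4.12}.

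Second, the lattice/no-name machinery you set up for step (b), and the accompanying discussion of a ``$Q_8$-contribution to $V$'', rests on a confusion about where the $2$-primary obstruction lives. Since $|G_0|$ is even and $\gcd\{|N|,|G_0|\}=1$ (Theorem \ref{t2.1}), the kernel $N$ has \emph{odd} order; there is no $2$-primary piece of $N$ and hence no $Q_8$-piece of $V$. By Theorem \ref{t3.1}, $k(N)$ is then retract $k$-rational over any infinite field with no condition on $\zeta_8$ whatsoever, and Saltman's criterion (Theorem \ref{t1.11}) reduces everything to the single input that $k(G_0)$ be retract $k$-rational. The hypothesis on $\zeta_8$ enters only in proving $k(G_0)$ retract $k$-rational in the type (II) case (via the rationality of $k(\widehat{S}_5)$), not through any Endo--Miyata style obstruction coming from the kernel.
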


Note that, in Theorem \ref{t1.8}, the assumption that the
Frobenius kernel is abelian is unnecessary because of Theorem
\ref{t2.1} and Theorem \ref{t2.9}. A criterion of retract
rationality when the Frobenius complement is a $Z$-group and the
Frobenius kernel is abelian is given in Theorem \ref{c4.2}.

\begin{example} \label{ex1.16}
Let $p$ be a prime number with $p \equiv 1$ (mod $8$). Choose an
integer $r$ such that $2\le r\le p \, - \,1$ and the order of $r$
in $(\bm{Z}/p\bm{Z})^{\times}$ is $8$. Define $G=\langle
\sigma,\tau: \sigma^p=\tau^8=1,~
\tau\sigma\tau^{-1}=\sigma^r\rangle$. Then $G$ is a Frobenius
group and $G\simeq C_p \rtimes C_8$. We claim that $\bm{Q}(G)$ is
not retract $\bm{Q}$-rational. Otherwise, $\bm{Q}(C_8)$ would be
retract $\bm{Q}$-rational by Theorem \ref{t1.11}. But this is
impossible by Theorem \ref{t3.1}.

The above example illustrates that, in Theorem \ref{t1.9}, the
assumption that $\zeta_8 \in k$ is crucial. A similar situation
happens to Theorem \ref{t1.8}; see Example \ref{ex4.13}.
\end{example}

Applying Theorem \ref{t1.9} and Theorem \ref{t1.8} together with
Theorem \ref{t1.12}, we deduce results of the inverse Galois
problem. We record only results for non-solvable Frobenius groups.

\medskip
\begin{theorem} \label{t1.13}
Let $G$ a non-solvable Frobenius group. If $k$ is an algebraic
number field such that $k(\zeta_8)$ is a cyclic extension of $k$,
then there is a Galois extension field $K$ over $k$ whose Galois
group is isomorphic to $G$.
\end{theorem}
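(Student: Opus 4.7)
The plan is to deduce this theorem as an immediate consequence of the two main tools already established in the excerpt, namely Theorem \ref{t1.8} and Theorem \ref{t1.12}. Both theorems take the field $k$ as input with compatible hypotheses, and the chain ``retract rationality $\Rightarrow$ inverse Galois problem'' is precisely the content of Theorem \ref{t1.12}, so the entire argument is essentially a matter of checking that the hypotheses line up.

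First I would verify that Theorem \ref{t1.8} applies. An algebraic number field $k$ is infinite and has characteristic $0$, so the requirement ``$\fn{char}k=2$ or $\fn{char}k=0$'' is automatically satisfied. The remaining hypothesis, that $k(\zeta_8)$ is a cyclic extension of $k$, is exactly the assumption of the present theorem. Hence Theorem \ref{t1.8} gives that $k(G)$ is retract $k$-rational.

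Next I would invoke Theorem \ref{t1.12}. Since $k$ is an algebraic number field, Hilbert's irreducibility theorem holds for $k$ (this is the classical form explicitly mentioned as an admissible example in the statement of Theorem \ref{t1.12}, with Serre's monograph \cite{Se} as reference). Combined with the retract $k$-rationality of $k(G)$ established in the previous step, Theorem \ref{t1.12} immediately furnishes a Galois extension $K/k$ with $\fn{Gal}(K/k)\simeq G$, which is the desired conclusion.

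In short, the proof is a two-line assembly of already proved statements, and there is no genuine obstacle at this stage: all the serious work, in particular the rationality-theoretic input needed to handle a non-solvable Frobenius group and the role of $\zeta_8$, has been absorbed into Theorem \ref{t1.8}, while the passage from retract rationality to the inverse Galois problem via Hilbert's irreducibility theorem has been packaged into Theorem \ref{t1.12}. The only care needed is to note that the hypotheses of the two theorems are simultaneously satisfied by any algebraic number field $k$ for which $k(\zeta_8)/k$ is cyclic.
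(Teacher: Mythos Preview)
Your proposal is correct and matches the paper's own approach: the paper explicitly states (just before Theorem \ref{t1.13}) that the result is obtained by applying Theorem \ref{t1.8} together with Theorem \ref{t1.12}, which is precisely the two-step deduction you outline.
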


According to Suzuki's classification (see Theorem \ref{t2.8}),
there are two types of non-solvable Frobenius groups. The result
in Theorem \ref{t1.8} for the type (I) group in Theorem \ref{t2.8}
may be sharpened as follows.

\medskip
%--------------------------t4.14
\begin{theorem} \label{t1.14}
Let $k$ be a field with $\fn{char}k=0$, $G$ be a Frobenius group
whose Frobenius complement is isomorphic to $G_1\times G_2$ where
$G_1$ is a $Z$-group and $G_2\simeq SL_2(\bm{F}_5)$. Then $k(G)$
is retract $k$-rational.
\end{theorem}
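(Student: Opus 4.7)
The plan is to run the same two-step reduction used in Theorems \ref{t1.9} and \ref{t1.8}: first peel off the Frobenius kernel $N$, and then analyze the Frobenius complement $H = G_1 \times G_2$ factor by factor.

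For the first step, Theorem \ref{t1.6}(1) gives that $N$ is nilpotent; combining this with the classification results for Frobenius kernels invoked in the remark immediately after Theorem \ref{t1.8}, it suffices to treat the case where $N$ is abelian. Since $H$ acts fixed-point-freely on $N \setminus \{1\}$, a no-name-lemma argument applied to the rational function field attached to $N$ reduces retract $k$-rationality of $k(G)$ to retract $k$-rationality of $k(W)^H$ for a suitable faithful $H$-module $W$.

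For the second step, I would split the action of $H = G_1 \times G_2$ into external factors and argue, again by the no-name lemma, that it suffices to establish retract $k$-rationality separately for a faithful invariant field of $G_1$ and of $G_2$. The $Z$-group factor $G_1$ is covered by the results for $Z$-groups developed in Section 4, which in characteristic zero require no condition on $k$ (the Sylow subgroups are all cyclic, so no \emph{generalized quaternion} pathology appears).

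The heart of the matter, and what I expect to be the main obstacle, is proving that $k(SL_2(\bm{F}_5))$ is retract $k$-rational for every characteristic-zero field $k$ with no assumption on roots of unity. The approach is to exploit the faithful two-dimensional representation of the binary icosahedral group $SL_2(\bm{F}_5)$, which is defined over $k(\sqrt{5})$, and descend it to a four-dimensional $k$-representation via Galois descent for $\mathrm{Gal}(k(\sqrt{5})/k)$. The crucial point is that the restriction of this representation to the $2$-Sylow $Q_8 \subset SL_2(\bm{F}_5)$ has a $\bm{Q}$-form through the Hamilton quaternion algebra, which is precisely why no $\zeta_8$ hypothesis intrudes --- a generalized quaternion of order $\ge 16$ would not admit such a rational form, which is what forces the $\zeta_8$ assumption in Theorem \ref{t1.8}. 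Executing the descent at the level of the full group $SL_2(\bm{F}_5)$, and verifying that the resulting invariant field is not merely unirational but actually retract $k$-rational, is the technical core: I expect this to follow from an explicit construction of a versal $SL_2(\bm{F}_5)$-torsor combined with the classical invariant theory of the binary icosahedral group, rather than from any soft cohomological argument.
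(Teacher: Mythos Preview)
Your reduction skeleton matches the paper's: strip off the kernel $N$, then handle the complement $G_1\times G_2$ factor by factor. But there are two concrete problems.

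\medskip
\textbf{The $Z$-group step is wrong as stated.} You claim that the Section~4 results for $Z$-groups ``in characteristic zero require no condition on $k$ (the Sylow subgroups are all cyclic, so no generalized quaternion pathology appears).'' This is false: Lemma~\ref{t4.1} needs $k(\zeta_{2^r})$ to be cyclic over $k$, and the obstruction comes from \emph{cyclic} $2$-groups, not generalized quaternion ones. For instance $C_8$ is a $Z$-group and $\bm{Q}(C_8)$ is not retract $\bm{Q}$-rational by Theorem~\ref{t3.1}. The paper's argument works for a different reason you did not identify: since $G_1\times G_2$ is a $GZ$-group with $|G_2|=120$, one has $\gcd\{|G_1|,|G_2|\}=1$, so $|G_1|$ is \emph{odd} and the $2$-part condition in Lemma~\ref{t4.1} is vacuous. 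Similarly, $N$ is abelian of odd order by Theorem~\ref{t2.1}, so Theorem~\ref{t3.1} applies to $k(N)$ unconditionally.

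\medskip
\textbf{The $SL_2(\bm{F}_5)$ step is overbuilt and incomplete.} You propose Galois descent of the two-dimensional icosahedral representation from $k(\sqrt{5})$ to $k$, followed by a versal-torsor construction and classical invariant theory, with the key verification left as ``I expect this to follow.'' The paper bypasses all of this: Theorem~\ref{t4.11} simply invokes Plans's result \cite[Theorem~14]{Pl} that $\bm{Q}(\widetilde{A}_5)$ is $\bm{Q}$-rational, and rationality over $\bm{Q}$ propagates to any characteristic-zero $k$. Your descent program might ultimately succeed, but it is speculative as written, and it reproves a known theorem rather than citing it.

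\medskip
Finally, your reduction glue is phrased as ``no-name lemma'' arguments, whereas the paper uses Saltman's Theorem~\ref{t1.11} (for $N\rtimes G_0$ with $N$ abelian and coprime orders) and Theorem~\ref{t3.2}(1) (for direct products). These are the tools that actually deliver \emph{retract} rationality in this setting; the no-name lemma gives rationality of one field over another and does not by itself yield the retract statement you need.
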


A corollary of the above theorem is the following.

%--------------------------t4.14
\begin{theorem} \label{t1.15}
Let $k$ be an algebraic number field, $G$ be a Frobenius group
whose Frobenius complement is isomorphic to $G_1\times G_2$ where
$G_1$ is a $Z$-group and $G_2\simeq SL_2(\bm{F}_5)$. Then there is
a Galois extension field $K$ over $k$ whose Galois group is
isomorphic to $G$.
\end{theorem}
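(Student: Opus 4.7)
The plan is to deduce Theorem \ref{t1.15} as an immediate corollary of the two earlier results already established in the paper, namely Theorem \ref{t1.14} and Theorem \ref{t1.12}. The statement is set up precisely so that no new argument is required: we only need to verify that the hypotheses of the two theorems are met in the number field setting.

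First I would observe that an algebraic number field $k$ has characteristic $0$, so the hypothesis on $k$ in Theorem \ref{t1.14} is satisfied. Applied to the given Frobenius group $G$ (whose complement has the form $G_1 \times G_2$ with $G_1$ a $Z$-group and $G_2 \simeq SL_2(\mathbf{F}_5)$), Theorem \ref{t1.14} yields that the fixed field $k(G)$ is retract $k$-rational.

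Next I would invoke Theorem \ref{t1.12}. Here we need $k$ to be an infinite field satisfying Hilbert's irreducibility theorem. Both conditions hold for any algebraic number field: they are infinite and, by a classical theorem of Hilbert (see Serre's monograph cited as \cite{Se}), they satisfy Hilbert's irreducibility property. The retract $k$-rationality of $k(G)$ established in the previous step, together with Theorem \ref{t1.12}, then produces a Galois extension $K/k$ with $\fn{Gal}(K/k) \simeq G$, which is exactly the conclusion of Theorem \ref{t1.15}.

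There is no real obstacle here: the theorem is structured as a packaging statement that combines the hard retract-rationality input (Theorem \ref{t1.14}, whose proof requires genuine work with the special structure of $SL_2(\mathbf{F}_5)$) with the general inverse Galois mechanism (Theorem \ref{t1.12}). The entire proof therefore consists of verifying the two short hypothesis checks above and citing the two prior theorems; the argument should fit in two or three sentences.
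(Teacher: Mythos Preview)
Your proposal is correct and matches the paper's intended approach: the paper introduces Theorem \ref{t1.15} explicitly as ``A corollary of the above theorem,'' and the surrounding text (before Theorem \ref{t1.13}) makes clear that the mechanism is precisely to combine the retract-rationality result with Theorem \ref{t1.12}. No separate proof is written out in the paper, so your two hypothesis checks (char $k=0$ for Theorem \ref{t1.14}; number fields are infinite and Hilbertian for Theorem \ref{t1.12}) constitute exactly the argument the author had in mind.
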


We note that it is possible to solve the inverse Galois problem in
Theorem \ref{t1.15} by other methods using \cite[page 55, Theorem
3.12]{Fe,Me,ILF} or \cite[page 326, Theorem 8.1]{MM}.

\bigskip
The proof of Theorem \ref{t1.9}, Theorem \ref{t1.8} and Theorem
\ref{t1.14} will be given at the end of Section 4. We use
Saltman's result (Theorem \ref{t1.11}) and the structure theorems
of Frobenius groups as a method of reduction process. The
structure theorems of Frobenius groups were investigated by
Zassenhaus, Suzuki, Wolf and other people \cite{Za1,Su,Wo}. We
summarize these results in Section 2. However, to prove the
retract rationality is rather tricky. Sometimes we should prove
the stronger result of rationality in order to prove the retract
rationality; thus we are reduced to solving Noether's problem for
the corresponding groups. For example, Theorem \ref{t4.5} proves
that $k(G_1)$ and $k(G_2)$ are $k$-rational where $G_1 \simeq Q_8
\rtimes C_{3^l}$, $G_2$ is defined by the group extension $1 \to
G_1 \to G_2 \to C_2 \to 1$ ($Q_8$ is the quaternion group of order
$8$) and $k$ is a field containing $\zeta_e$ with exp$(G_2)=e$.
Also see the proof of Theorem \ref{t4.11} and Theorem \ref{t4.12}.
In Section 5, some remarks about the unramified Brauer groups for
Frobenius groups will be given; here we don't assume the Frobenius
kernels are abelian.

\bigskip
Acknowledgments. I should like to thank I.\ M.\ Isaacs, E.\
Khukhro and Victor Mazurov for providing many helpful messages
about Frobenius groups.

\begin{idef}{Standing notations.}
In discussing retract rationality, we always assume that the
ground field is infinite (see Definition \ref{d1.1}). For
emphasis, recall $k(G)=k(x_g:g\in G)^G$, which is defined in the
first paragraph of this section.

We denote by $\zeta_n$ a primitive $n$-th root of unity in some extension field of the ground field $k$.
When we write $\zeta_n\in k$ or $\fn{char}k\nmid n$,
it is understood that either $\fn{char}k=0$ or $\fn{char}k=p>0$ with $p\nmid n$.

All the groups in this paper are finite groups. $C_n$ denotes the
cyclic group of order $n$. The exponent of a group $G$, $\exp
(G)$, is defined as $\exp(G)=\fn{lcm}\{\fn{ord}(g):g\in G\}$. We
denote $\bm{F}_p$ the finite field with $p$ elements.
\end{idef}

%-----------------------------------S2
\section{Structure theorems of Frobenius groups}

We recall some standard results of Frobenius groups in this section.

%--------------------t2.1
\begin{theorem}[{\cite[Lemma 6.1 and Theorem 6.3]{Is}}] \label{t2.1}
Let $G=N\rtimes G_0$ be a Frobenius group with kernel $N$ and
complement $G_0$. Then

{\rm (1)} $\gcd\{|N|,|G_0|\}=1$, and

{\rm (2)} if $|G_0|$ is even, then $N$ is abelian.
\end{theorem}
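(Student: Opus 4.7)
The plan is to prove (1) by contradiction, using the fixed-point-free action together with an orbit argument for $p$-groups. First I would suppose some prime $p$ divides both $|N|$ and $|G_0|$, and pick a Sylow $p$-subgroup $Q \le G_0$. Because $Q$ acts by conjugation on the set of Sylow $p$-subgroups of $N$, and this set has cardinality $\equiv 1 \pmod p$, the $p$-group $Q$ must fix some Sylow $p$-subgroup $P \le N$. Now $Q$ acts nontrivially on the nontrivial $p$-group $P$ by conjugation inside $G$, and the class equation for this action (orbits of non-identity elements have $p$-power size $\ge p$, so the fixed-point set has size $\equiv |P| \equiv 0 \pmod p$) forces a nontrivial fixed point $x \in P \setminus \{1\}$ centralized by some $g \in Q \setminus \{1\}$, contradicting the Frobenius hypothesis $gxg^{-1} \neq x$.

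For part (2), I would fix an involution $\tau \in G_0$ and aim to show $\tau$ inverts every element of $N$; once this is known, abelianness of $N$ follows instantly from $(xy)^{-1} = \tau(xy)\tau^{-1} = (\tau x \tau^{-1})(\tau y \tau^{-1}) = x^{-1}y^{-1}$, i.e.\ $y^{-1}x^{-1} = x^{-1}y^{-1}$. To show $\tau$ inverts $N$, I would introduce the map $\varphi\colon N \to N$ defined by $\varphi(x) = x^{-1}\,\tau x \tau^{-1}$. The key step is to prove $\varphi$ is injective: if $\varphi(x)=\varphi(y)$, a short manipulation yields $yx^{-1} \in C_G(\tau) \cap N$, and the fixed-point-free hypothesis on $\tau$ gives $yx^{-1} = 1$. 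Finiteness of $N$ then upgrades injectivity to surjectivity.

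Given surjectivity, every $z \in N$ can be written as $z = x^{-1}(\tau x \tau^{-1})$ for some $x$, and a direct computation using $\tau^2 = 1$ gives
\[
\tau z \tau^{-1} = (\tau x^{-1} \tau^{-1})(\tau^2 x \tau^{-2}) = (\tau x \tau^{-1})^{-1} x = z^{-1},
\]
as desired.

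The main obstacle I anticipate is purely expository: making sure the orbit-counting step in (1) is stated cleanly, and verifying the injectivity of $\varphi$ in (2) without unnecessary computation. Both arguments are classical and compact, so apart from careful bookkeeping there is no real difficulty; indeed one could simply quote \cite[Lemma 6.1 and Theorem 6.3]{Is}, but the above sketch recovers the results from first principles.
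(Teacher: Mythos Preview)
Your argument is correct and is the classical proof of both facts. Note, however, that the paper itself does \emph{not} prove Theorem~\ref{t2.1}: it is recorded as a known result with a citation to Isaacs \cite[Lemma 6.1 and Theorem 6.3]{Is}, and no proof is given. So there is nothing in the paper to compare against, and---as you yourself observe at the end of your proposal---a bare citation would already suffice here.

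One minor expository slip in part (1): the parenthetical ``orbits of non-identity elements have $p$-power size $\ge p$'' is not literally true, since non-identity elements of $P$ could themselves be $Q$-fixed. What you mean is that every \emph{non-singleton} orbit has size divisible by $p$, whence $|\mathrm{Fix}_Q(P)| \equiv |P| \equiv 0 \pmod{p}$; since $1 \in \mathrm{Fix}_Q(P)$ this forces $|\mathrm{Fix}_Q(P)| \ge p$, giving the required nontrivial fixed point. This is clearly the intended argument, and the conclusion is unaffected. The word ``nontrivially'' in ``$Q$ acts nontrivially on $P$'' is also superfluous: the orbit count works regardless, and indeed if $Q$ acted trivially the contradiction would be immediate.
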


%-----------------d2.3
\begin{defn}[{\cite[page 160]{Wo}}] \label{d2.3}
Let $p$ and $q$ be prime numbers (the situation $p=q$ is not excluded).
We say that a finite group $G$ satisfies the $pq$-condition if every subgroup of order $pq$ in $G$ is cyclic.
\end{defn}

%------------------t2.4
\begin{theorem}[{\cite[Theorem 5.3.2, page 161; CE, Theorem 11.6, page 262]{Wo}}] \label{t2.4}
Let $G$ be a finite group.
Then the following conditions are equivalent,
\begin{enumerate}
\item[{\rm (i)}] $G$ satisfies the $p^2$-condition for all prime
numbers $p$; \item[{\rm (ii)}] $G$ is a $GZ$-group; \item[{\rm
(iii)}] every abelian subgroup of $G$ is cyclic; \item[{\rm (iv)}]
$G$ has periodic cohomology, i.e.\ there is some integer $d\ne 0$,
some element $u\in \widehat{H}^d (G,\bm{Z})$ such that the cup
product map $u\cup -:\widehat{H}^n (G,\bm{Z}) \to
\widehat{H}^{n+d} (G,\bm{Z})$ is an isomorphism for all integers
$n$ $($where $\widehat{H}^n (G,\bm{Z})$ is the Tate cohomology$)$.
\end{enumerate}
\end{theorem}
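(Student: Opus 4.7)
The plan is to establish the equivalence as a loop $(\text{i})\Leftrightarrow (\text{ii})\Leftrightarrow (\text{iii})\Leftrightarrow (\text{iv})$, where the first two equivalences are elementary and reduce to a classical structure theorem for $p$-groups, while the last equivalence is the Cartan-Eilenberg criterion for periodic cohomology.

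For $(\text{i})\Leftrightarrow(\text{ii})$, I would first observe that the $p^2$-condition is local: any subgroup of order $p^2$ is contained in a Sylow $p$-subgroup $P$, so (i) holds in $G$ if and only if it holds in every Sylow $p$-subgroup. Saying that $P$ contains no non-cyclic subgroup of order $p^2$ is equivalent to saying that $P$ contains no copy of $C_p\times C_p$, i.e.\ $P$ has a unique subgroup of order $p$. At this point I invoke the classical theorem (due to Burnside; see \cite[Ch.~5]{Wo}) that a $p$-group with a unique subgroup of order $p$ is cyclic when $p$ is odd, and is cyclic or generalized quaternion when $p=2$. This gives the equivalence with the $GZ$-property exactly.

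For $(\text{ii})\Leftrightarrow(\text{iii})$, the easy direction is $(\text{ii})\Rightarrow(\text{iii})$: given an abelian subgroup $A\le G$, each Sylow $p$-subgroup $A_p$ of $A$ embeds into a Sylow $p$-subgroup of $G$; being abelian, it cannot sit inside a generalized quaternion group except as a cyclic subgroup, so every $A_p$ is cyclic and hence $A$ itself is cyclic. For the converse, if every abelian subgroup is cyclic then in particular every abelian subgroup of a Sylow $p$-subgroup $P$ is cyclic, so $P$ contains no $C_p\times C_p$, and the same classical theorem used above forces $P$ to be cyclic (for odd $p$) or cyclic/generalized quaternion (for $p=2$).

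For $(\text{iii})\Leftrightarrow(\text{iv})$, I would simply refer to the Cartan-Eilenberg theorem \cite[Ch.~XII, Thm.~11.6]{Wo}. The idea behind the direction $(\text{iv})\Rightarrow(\text{iii})$ is that if $G$ contains $C_p\times C_p$, then $\widehat{H}^*(C_p\times C_p,\bm{Z})$ is not periodic (its Poincar\'e series grows polynomially), and a transfer/restriction argument transports the obstruction back to $G$. For $(\text{iii})\Rightarrow(\text{iv})$, one constructs a periodicity generator by assembling compatible classes on cyclic or generalized quaternion Sylow subgroups (both of which have periodic cohomology, of periods $2$ and $4$ respectively) and gluing via the isomorphism $\widehat{H}^*(G,\bm{Z})_{(p)}\cong \widehat{H}^*(G_p,\bm{Z})^G$; taking a common period $d$ gives the required cup-product isomorphism.

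The only non-routine step is the classical $p$-group structure theorem and the Cartan-Eilenberg construction of a periodicity class — both of which are well-documented and, consistent with the paper's style of citing \cite{Wo,CE}, I would invoke rather than reprove.
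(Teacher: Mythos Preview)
The paper does not supply its own proof of this theorem; it is stated as a background result with citations to \cite{Wo} and \cite{CE}, together with the remark that the equivalence of (ii), (iii), and (iv) goes back to Artin and Tate. Your outline is a correct sketch of the standard argument found in those references --- the reduction to Sylow subgroups, the classical classification of $p$-groups with a unique subgroup of order $p$, and the Cartan--Eilenberg periodicity criterion --- and in that sense you have written more than the paper itself does. One small slip: your citation ``\cite[Ch.~XII, Thm.~11.6]{Wo}'' should point to Cartan--Eilenberg, not Wolf.
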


Note that the equivalence of (ii), (iii) and (iv) of the above
theorem was due to Artin and Tate \cite[p.232; Mi, p.627; Su,
p.688]{CE}. The $GZ$-groups arise naturally in algebraic topology
and differential geometry (\cite[pages 357--358; Wo; Jo]{CE}). For
example, if $G$ is a finite group acting on a homology sphere
without fixed points, then (i) P.\ A.\ Smith shows that all
abelian groups of $G$ are cyclic (and therefore $G$ is a
$GZ$-group by Theorem \ref{t2.4}), and (ii) Milnor shows that
there is at most one element of order 2, which is necessary to lie
in the center of $G$ \cite{Mi}. The complete determination of such
groups was solved by Madsen, Thomas and Wall in 1976 (see
\cite[page 158]{Br}).

%-------------------------t3.3
\begin{theorem}[{Burnside \cite[page 281, 10.1.10]{Ro}}] \label{t3.3}
Let $G$ be a finite group. Then $G$ is a $Z$-group if and only if
it is a split meta-cyclic group, i.e. $G=\langle
\sigma,\tau:\sigma^m=\tau^n,~\tau\sigma\tau^{-1}=\sigma^r\rangle$
where $m,n\ge 1$, $r^n\equiv 1 \pmod{m}$, $\gcd\{m,$ $n(r-1)\}=1$.
\end{theorem}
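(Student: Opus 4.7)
The plan is to prove the two implications separately: the converse by a direct order computation, and the forward direction by first establishing solvability and then extracting the presentation. For the converse, $\gcd\{m,n(r-1)\}=1$ forces $\gcd\{m,n\}=1$, so $|G|=mn$ with $\langle\sigma\rangle$ normal cyclic of order $m$ and $G/\langle\sigma\rangle\cong\langle\tau\rangle$ cyclic of order $n$. Any prime $p\mid|G|$ divides exactly one of $m$ or $n$, and the Sylow $p$-subgroup either embeds in the cyclic group $\langle\sigma\rangle$ (if $p\mid m$) or projects isomorphically onto its cyclic image in $G/\langle\sigma\rangle$ (if $p\mid n$), hence is cyclic in either case.

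For the forward direction, solvability follows from Burnside's normal $p$-complement theorem. Let $p$ be the smallest prime dividing $|G|$ and $P$ a Sylow $p$-subgroup, cyclic of order $p^a$. Since $P$ is abelian, $P\subseteq C_G(P)$, so $N_G(P)/C_G(P)$ embeds in $\fn{Aut}(P)\cong(\bm{Z}/p^a\bm{Z})^{\times}$, a group of order $p^{a-1}(p-1)$. As $P$ remains a Sylow $p$-subgroup of $N_G(P)$, this quotient has order coprime to $p$ and hence divides $p-1$; but any prime dividing its order also divides $|G|$ and is therefore $\geq p$, forcing $N_G(P)=C_G(P)$. Burnside's transfer theorem then yields a normal $p$-complement $N$, and induction on $|G|$ makes both $N$ and $G/N\cong P$ solvable $Z$-groups, so $G$ is solvable.

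I then extract the presentation by showing that $G'$ and $G/G'$ are both cyclic with coprime orders. Granted this, Schur--Zassenhaus splits $G=G'\rtimes\langle\tau\rangle$; writing $G'=\langle\sigma\rangle$ of order $m$, $\langle\tau\rangle$ of order $n$, and $\tau\sigma\tau^{-1}=\sigma^r$, the equality $G'=\langle\sigma\rangle$ (equivalently, $\sigma^{1-r}$ generates $\langle\sigma\rangle$) becomes $\gcd\{m,r-1\}=1$, and together with $\gcd\{m,n\}=1$ this gives the desired presentation. The cyclicity claim is proved by induction on $|G|$: a minimal normal subgroup $M$ is elementary abelian of some prime exponent $p$ by solvability and lies in a cyclic Sylow $p$-subgroup, so $M\cong C_p$; one applies the inductive hypothesis to $G/M$ and lifts through the exact sequence $1\to M\to G\to G/M\to 1$. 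The principal obstacle is this lifting, specifically showing that $G'$ itself is cyclic rather than merely an extension of a cyclic group by $C_p$; this uses the coprimality part of the inductive hypothesis for $G/M$, together with Theorem \ref{t2.4}(iii) (every abelian subgroup of $G$ is cyclic), to rule out $G'$ acquiring a non-cyclic abelian subgroup.
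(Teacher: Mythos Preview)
The paper does not prove Theorem~\ref{t3.3}: it is quoted as a classical result of Burnside, with a reference to Robinson \cite[10.1.10]{Ro}, and is used throughout as a black box. There is therefore no proof in the paper to compare your argument against.

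That said, your plan is the standard textbook route and is correct in outline. Two remarks on the forward direction. First, the lifting step you single out as the ``principal obstacle'' can be handled without the coprimality part of the inductive hypothesis: since $\operatorname{Aut}(M)\cong C_{p-1}$ is abelian, $G/C_G(M)$ is abelian, so $G'\le C_G(M)$ and hence $M\le Z(G')$; then $G'/Z(G')$ is a quotient of the cyclic group $(G/M)'=G'/M$, so $G'$ is abelian and therefore cyclic by Theorem~\ref{t2.4}(iii). Second, one can bypass the induction entirely using the Fitting subgroup: in the solvable group $G$ one has $C_G(F(G))\le F(G)$, and $F(G)$, being nilpotent with cyclic Sylow subgroups, is cyclic; since $\operatorname{Aut}(F(G))$ is abelian this gives $G'\le C_G(F(G))\le F(G)$ cyclic in one stroke. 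Once $G'=\langle\sigma\rangle$ is cyclic, the condition $[G,G]=\langle\sigma\rangle$ already forces $\gcd(m,r-1)=1$, which in turn forces $\tau^n=1$ (so Schur--Zassenhaus is not actually needed for the splitting); the coprimality $\gcd(m,n)=1$ then follows because a common prime $p$ would make $\sigma^{m/p}$ and $\tau^{n/p}$ commute and generate a copy of $C_p\times C_p$, contradicting Theorem~\ref{t2.4}(iii).
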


In the above theorem, note that cyclic groups arise when $m=1$.

%--------------------t2.5
\begin{theorem}[{Burnside \cite[Theorem 6.9, page 188]{Is}}] \label{t2.5}
If $G$ is a Frobenius complement, then $G$ satisfies the
$pq$-condition for all prime numbers $p$ and $q$ $($the
possibility $p=q$ is allowed$)$.
\end{theorem}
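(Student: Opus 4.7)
The plan is to take any subgroup $H \le G$ with $|H|=pq$ and prove that $H$ is cyclic. Since any subgroup of a Frobenius complement still acts fixed-point-freely on the ambient Frobenius kernel $N$, $H$ is itself a Frobenius complement, so Theorems~\ref{t1.6} and~\ref{t2.1} apply directly to $H$.

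When $p=q$, $H$ is a $p$-group of order $p^2$, hence coincides with its own Sylow $p$-subgroup. By Theorem~\ref{t1.6}(2), $H$ is cyclic for $p$ odd, and cyclic or generalized quaternion for $p=2$. A generalized quaternion group only exists at order $\ge 8$, so $|H|=4$ forces $H \cong C_4$. In all cases $H$ is cyclic.

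When $p \ne q$, say $p>q$, Sylow's theorems make the $p$-Sylow $P = \langle a\rangle$ normal, so $H = P \rtimes Q$ with $Q = \langle b\rangle$ of order $q$, both cyclic by Theorem~\ref{t1.6}(2). If $Q$ acts trivially on $P$, then $H$ is abelian with coprime factors, hence cyclic. Otherwise $bab^{-1}=a^r$ with $r$ of multiplicative order $q$ modulo $p$, giving the non-abelian group of order $pq$ (and forcing $q \mid p-1$). Ruling out this possibility is the heart of the argument, and the main obstacle.

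To rule it out, I would first reduce to the case where $N$ is an elementary abelian $\ell$-group for some prime $\ell \nmid pq$: $N$ is nilpotent by Theorem~\ref{t1.6}(1), so $Z(N) \ne 1$ is characteristic and $H$-stable, and fixed-point-freeness descends; replacing $N$ by $Z(N)$ and then by its $\ell$-torsion for any prime $\ell \mid |N|$ yields $N = \bm{F}_\ell^n$, and $\ell \nmid pq$ by Theorem~\ref{t2.1}(1). Working over $\overline{\bm{F}}_\ell$, decompose $N \otimes \overline{\bm{F}}_\ell = \bigoplus_{i=0}^{p-1} V_i$ into $a$-eigenspaces ($av = \zeta_p^i v$ on $V_i$); fixed-point-freeness of $a$ gives $V_0 = 0$, while $N \ne 0$ forces some $V_i \ne 0$ with $i \ne 0$. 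The relation $bab^{-1}=a^r$ translates into $b(V_i) = V_{ir^{-1}}$, and since $r$ has order $q$ modulo $p$ the non-zero eigenspaces break into $b$-orbits of length exactly $q$. On the span $W = V_i \oplus V_{ir^{-1}} \oplus \cdots \oplus V_{ir^{-(q-1)}}$ of one such orbit, $b$ cyclically permutes the summands, and the constraint $b^q=1$ lets one normalize the block isomorphisms so that $b|_W$ becomes a cyclic permutation matrix tensored with identities, whose eigenvalues are exactly the $q$-th roots of unity. In particular $1$ appears; descending the resulting non-zero $b$-fixed vector from $\overline{\bm{F}}_\ell$ to $\bm{F}_\ell$ by Galois gives a non-trivial fixed vector of $b$ in $N$, contradicting fixed-point-freeness. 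The crucial (and only non-routine) ingredient is this last eigenvalue computation: the relation $b^q=1$ forces the product of block isomorphisms around the $q$-cycle to be trivial, which is exactly what puts $1$ into the eigenvalue spectrum and yields the forbidden fixed vector.
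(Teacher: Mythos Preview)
The paper does not give a proof of Theorem~\ref{t2.5}; it is quoted as a classical result of Burnside with a reference to Isaacs. So there is no ``paper's own proof'' to compare against. That said, your argument is correct and is essentially the classical one.

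A couple of small comments. In the $p=q$ case you invoke Theorem~\ref{t1.6}(2), which is legitimate within the paper's logic since both Theorems~\ref{t1.6} and~\ref{t2.5} are imported as independent cited facts; in a self-contained development one would instead observe directly that a Frobenius complement cannot contain $C_p\times C_p$ (the same eigenvalue argument you give for $b$ works for either factor), and then deduce the Sylow structure. For the descent step in the $p\ne q$ case, you can avoid the phrase ``by Galois'' and simply note that $\ker(b-1)$ computed over $\bm{F}_\ell$ and over $\overline{\bm{F}}_\ell$ have the same dimension (kernel commutes with flat base change), so a nonzero $b$-fixed vector over $\overline{\bm{F}}_\ell$ forces one over $\bm{F}_\ell$. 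Finally, the reduction to an elementary abelian $\ell$-group can be streamlined: since $N$ is nilpotent its $\ell$-Sylow $N_\ell$ is characteristic, and $\Omega_1(Z(N_\ell))$ is a nontrivial characteristic elementary abelian $\ell$-subgroup on which $H$ still acts fixed-point-freely; this bypasses the intermediate passage through $Z(N)$.
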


%--------------------t2.6
\begin{theorem}[{Zassenhaus \cite[Theorem 6.3.1, page 195]{Za1,Za2,Maz,Me,Wo}}] \label{t2.6}
Let $G$ be a Frobenius complement.
If $G$ is a perfect group $($i.e.\ $G$ satisfies the condition $G=[G,G])$,
then $G$ is isomorphic to $SL_2(\bm{F}_5)$.
\end{theorem}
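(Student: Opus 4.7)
The plan is to combine an elementary derived-series argument with Zassenhaus's structural classification of Frobenius complements.

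First, I would observe that $G$ must be non-solvable: any non-trivial solvable group has non-trivial abelianization (the last non-trivial term of its derived series is abelian and surjects onto a non-trivial abelian quotient), so a perfect solvable group is trivial, contradicting the non-triviality of a Frobenius complement. A parallel observation via Sylow theory, confirming how restrictive the hypotheses already are: by Theorem \ref{t1.6}(2) the Sylow $2$-subgroup of $G$ is cyclic or generalized quaternion, and if it were cyclic, Burnside's normal $p$-complement theorem would furnish a normal odd-order subgroup $N \triangleleft G$ with $G/N$ a $2$-group; perfectness forces $G = N$, whence $|G|$ is odd and $G$ is solvable by Feit--Thompson, a contradiction. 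So the Sylow $2$-subgroup of $G$ is a non-trivial generalized quaternion group.

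Next, I would invoke the Zassenhaus classification cited in the statement (Zassenhaus's Theorem 6.3.1, with refinements by Suzuki, Wolf, and others). The consequence I want is that every non-solvable Frobenius complement $G$ contains a normal subgroup $H \cong SL_2(\bm{F}_5)$ such that $G/H$ is a $Z$-group of order coprime to $30$. Granting this, since $G$ is perfect so is the quotient $G/H$; but by Theorem \ref{t3.3} a $Z$-group is metacyclic and in particular solvable, so $G/H$ being simultaneously perfect and solvable forces $G/H = 1$, and hence $G \cong SL_2(\bm{F}_5)$.

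The main obstacle is the Zassenhaus classification itself. A self-contained proof would have to produce a central involution $z \in G$ (via the fact that any involution in a Frobenius complement inverts the kernel element-wise, so $G$ has at most one involution and it must be central), analyze the quotient $G/\langle z\rangle$ whose Sylow $2$-subgroup is dihedral or Klein four, apply a Gorenstein--Walter style classification to identify $G/\langle z\rangle \cong A_5$, and finally lift to $SL_2(\bm{F}_5)$ using the fact that the Schur multiplier of $A_5$ has order $2$ together with the observation that the split extension $A_5 \times \bm{Z}/2\bm{Z}$ is not perfect. Given the cited references, however, one simply quotes the classification and concludes in a single line from the perfect hypothesis.
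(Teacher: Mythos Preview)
The paper does not prove Theorem~\ref{t2.6}; it is stated with references to Zassenhaus, Wolf, et al., and then used as a black box (notably in the proof of Theorem~\ref{t2.9}). So there is no paper proof to compare against---your proposal is attempting to supply what the paper merely cites.

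Your main reduction, however, is circular within the paper's logical structure. The classification you invoke---that a non-solvable Frobenius complement contains a normal copy of $SL_2(\bm{F}_5)$ with $Z$-group quotient of order coprime to $30$---is precisely the content of Theorem~\ref{t2.9}, and the paper proves Theorem~\ref{t2.9} \emph{using} Theorem~\ref{t2.6}: one applies Suzuki's Theorem~\ref{t2.8} to locate a subgroup $L\cong SL_2(\bm{F}_p)$ for some prime $p\ge 5$, notes that $L$ is itself a perfect Frobenius complement, and then invokes Theorem~\ref{t2.6} to force $p=5$. You therefore cannot quote Theorem~\ref{t2.9} (or its conclusion) to establish Theorem~\ref{t2.6}. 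Note also that the reference ``Theorem 6.3.1'' you lift from the statement \emph{is} the present theorem in Wolf's book, not a separate structural result you can appeal to.

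Your closing paragraph is the honest route, and the outline is broadly correct: a Frobenius complement has at most one (hence central) involution $z$; the quotient $G/\langle z\rangle$ has dihedral or Klein-four Sylow $2$-subgroup; one identifies $G/\langle z\rangle\cong A_5$; and then the non-split double cover is $\widetilde{A}_5\cong SL_2(\bm{F}_5)$. Two caveats. First, invoking Gorenstein--Walter is anachronistic and heavy---Zassenhaus's 1936 argument predates it and proceeds directly from the $pq$-conditions (Theorem~\ref{t2.5}); the modern streamlined proofs in \cite{Maz,Me} also avoid such machinery. Second, the identification of $G/\langle z\rangle$ with $A_5$ is where essentially all the work lies, so as written this is a pointer to the literature rather than a proof.
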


%---------------------t2.7
\begin{theorem}[{Zassenhaus \cite[page 179, Theorem 6.1.11; Jo, pages 164--166]{Za1,Wo}}] \label{t2.7}
Let $G$ be a finite solvable group.
Then $G$ is a $GZ$-group if and only if $G$ is isomorphic to one of the following groups.
\begin{enumerate}
\item[{\rm (I)}] $G=\langle
\sigma,\tau:\sigma^m=\tau^n,~\tau\sigma\tau^{-1}=\sigma^r\rangle$
where $m,n\ge 1$, $r^n\equiv 1 \pmod{m}$, $\gcd\{m,$ $n(r-1)\}=1$;
\item[{\rm (II)}] $G=\langle
\sigma,\tau,\lambda:\sigma^m=\tau^n=1,~\lambda^2=\tau^{n/2},~\tau\sigma\tau^{-1}=\sigma^r,~
\lambda\sigma\lambda^{-1}=\sigma^l,~\lambda\tau\lambda^{-1}=\tau^k\rangle$
where $m,n\ge 1$, $n=2^uv$ with $u\ge 2$, $2\nmid v$, $r^n\equiv 1
\pmod{m}$, $\gcd\{m,n(r-1)\}=1$, $l^2\equiv r^{k-1}\equiv
1\pmod{m}$, $k\equiv -1 \pmod{2^u}$, $k^2\equiv 1 \pmod{n}$;
\item[{\rm (III)}] $G=\langle
\sigma,\tau,\lambda,\rho:\sigma^m=\tau^n=\lambda^4=1,~\lambda^2=\rho^2=(\lambda\rho)^2,~\tau\sigma\tau^{-1}=\sigma^r,~
\lambda\sigma\lambda^{-1}=\sigma,~\rho\sigma\rho^{-1}=\sigma,~\tau\lambda\tau^{-1}=\rho,~\tau\rho\tau^{-1}=\lambda\rho\rangle$
where $m,n\ge 1$, $r^n\equiv 1 \pmod{m}$, $\gcd\{m,n(r-1)\}=1$,
$n\equiv 1 \pmod{2}$ and $n\equiv 0 \pmod{3}$; \item[{\rm (IV)}]
$G=\langle \sigma,\tau,\lambda,\rho,\nu:
\sigma^m=\tau^n=\lambda^4=1,~\lambda^2=\rho^2=(\lambda\rho)^2=\nu^2,~\tau\sigma\tau^{-1}=\sigma^r,~
\lambda\sigma\lambda^{-1}=\sigma,~\rho\sigma\rho^{-1}=\sigma,~\tau\lambda\tau^{-1}=\rho,~\tau\rho\tau^{-1}=\lambda\rho,~
\nu\lambda\nu^{-1}=\rho\lambda,~\nu\rho\nu^{-1}=\rho^{-1},~\nu\sigma\nu^{-1}=\sigma^t,~\nu\tau\nu^{-1}=\tau^k\rangle$
where $m,n\ge 1$, $r^n\equiv r^{k-1}\equiv t^2\equiv 1 \pmod{m}$,
$\gcd\{m,n(r-1)\}=1$, $n\equiv 1 \pmod{2}$, $n\equiv k+1\equiv
0\pmod{3}$, $k^2\equiv 1\pmod{n}$.
\end{enumerate}
\end{theorem}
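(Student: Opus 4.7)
The plan is to stratify by the Sylow 2-subgroup $P_2$ of $G$, which by the $GZ$-hypothesis (Definition \ref{d2.2}) is either cyclic or generalized quaternion $Q_{2^{u+1}}$ with $u \ge 2$; every Sylow at an odd prime is already cyclic. If $P_2$ is cyclic, then all Sylow subgroups of $G$ are cyclic, so $G$ is a $Z$-group, and Burnside's theorem (Theorem \ref{t3.3}) yields case (I) immediately.

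Assume $P_2 \simeq Q_{2^{u+1}}$. Using solvability, the nilpotence of the Fitting subgroup $F(G)$, and the fact that $P_2$ has a unique involution (its center), one extracts a normal metacyclic $Z$-subgroup $N \triangleleft G$ that absorbs the odd-order part of $G$ together with any cyclic 2-torsion piece that survives. By Theorem \ref{t3.3}, $N$ admits a presentation $N = \langle \sigma,\tau : \sigma^m = \tau^n = 1,\; \tau\sigma\tau^{-1} = \sigma^r\rangle$ with $\gcd\{m,n(r-1)\}=1$. The quotient $G/N$ is essentially a 2-group (possibly extended by a small odd outer action in the exceptional cases), and its conjugation action on $N$ must preserve the presentation; this forces congruences of the shape $l^2 \equiv r^{k-1} \equiv 1 \pmod{m}$ for any lift $\lambda\in G$ sending $\sigma\mapsto \sigma^l$, $\tau\mapsto\tau^k$.

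Case (II) arises when $G/N$ is generated by an involution-lift $\lambda$ with $\lambda^2=\tau^{n/2}\in N$ and $k\equiv -1 \pmod{2^u}$, forcing $\lambda$ to invert the 2-part of $\tau$. Cases (III) and (IV) occur precisely when $u=2$, so $P_2\simeq Q_8$, and some element of order $3$ in $G$ acts on $P_2$ by the exceptional outer automorphism of $Q_8$ coming from $\fn{Out}(Q_8)=S_3$; case (IV) is distinguished by an additional generator $\nu$ of order $4$ that realizes the full $S_3$-action on $Q_8$ rather than only a $C_3$-action. At each step the $GZ$-condition, equivalently the cyclicity of every abelian subgroup (Theorem \ref{t2.4}(iii)), eliminates spurious semidirect products whose abelianizations would contain $(\bm{Z}/p\bm{Z})^2$ for some prime $p$.

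The main obstacle is exhaustiveness. Ruling out a fifth case hinges on the fact that $\fn{Out}(Q_{2^{u+1}})$ is abelian for $u\ge 3$, so no odd-order outer action enlarges the picture for larger $u$; only $Q_8$ admits a nontrivial outer automorphism of odd order, which is responsible for the exceptional families (III) and (IV). The converse direction, that each of the four listed groups is in fact a solvable $GZ$-group, reduces to checking that the centralizer of every element is cyclic, a routine verification using the coprimality condition $\gcd\{m,n(r-1)\}=1$ together with the quaternion relations.
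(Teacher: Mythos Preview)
The paper does not prove Theorem \ref{t2.7}; it is quoted as a classical structure theorem of Zassenhaus, with full proofs in Wolf \cite[Theorem 6.1.11]{Wo} and Johnson \cite[pages 164--166]{Jo}, and is used throughout Section 4 as a black box. So there is no in-paper proof to compare against.

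Your outline is recognizably the classical argument one finds in Wolf: stratify by the isomorphism type of the Sylow $2$-subgroup, dispose of the $Z$-group case via Burnside (Theorem \ref{t3.3}), and in the quaternion case exploit the contrast between $\fn{Out}(Q_8)\simeq S_3$ and the abelian $\fn{Out}(Q_{2^{u+1}})$ for $u\ge 3$. The strategic shape is right.

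Two points, however. First, a factual slip: in family (IV) the Sylow $2$-subgroup is $\langle\lambda,\rho,\nu\rangle$, which has order $16$ and is the generalized quaternion group $Q_{16}$ (compare the remark after Theorem \ref{t4.5}, where $\exp(G_2)=8\cdot 3^l$). So it is not true that $P_2\simeq Q_8$ in case (IV); rather, $Q_8=\langle\lambda,\rho\rangle$ sits inside $P_2$ as a normal subgroup on which the $3$-element $\tau$ acts nontrivially. Your heuristic about the $S_3$-action on $Q_8$ is still the correct mechanism, but the bookkeeping of which group is the full $2$-Sylow needs adjustment.

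Second, and more substantively, the passage from ``$P_2$ is generalized quaternion'' to the existence of your normal metacyclic $N\lhd G$ with the stated presentation is where essentially all the work lives, and your sketch elides it. You invoke the Fitting subgroup and say $N$ ``absorbs the odd-order part of $G$ together with any cyclic $2$-torsion piece that survives,'' but you have not argued why $G/N$ is as small as you claim, nor why the lifts $\lambda$, $\nu$ satisfy \emph{exactly} the listed relations (e.g.\ $\lambda^2=\tau^{n/2}$, $k\equiv -1\pmod{2^u}$) rather than some twisted variant. In Wolf's treatment this step occupies several pages of case analysis, tracking how the odd part of $G$ interacts with the quaternion part via the structure of $\fn{Aut}(Q_8)$ and repeated use of the $pq$-conditions. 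As written, your proposal is a plausible roadmap but not a proof: the exhaustiveness you yourself flag as ``the main obstacle'' is the heart of the theorem, and it is not discharged here.
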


%-------------------------t2.8
\begin{theorem}[{Suzuki \cite[Theorem E; Wo, pages 197--198; Jo, pages 169--170]{Su}}] \label{t2.8}
Let $G$ be a finite non-solvable group.
Then $G$ is a $GZ$-group if and only if $G$ is isomorphic to one of the following groups,
\begin{enumerate}
\item[{\rm (I)}]
$G=H\times SL_2(\bm{F}_p)$ where $H$ is a $Z$-group, $p$ is a prime number $\ge 5$ and $\gcd\{|H|$, $|SL_2(\bm{F}_p)|\}=1$;
\item[{\rm (II)}]
$G=\langle \sigma,\tau,\lambda,L\rangle$ where $L$ is isomorphic to $SL_2(\bm{F}_p)$ with $p$ being a prime number $\ge 5$ and with the relations
\begin{gather*}
\sigma^m=\tau^n=\lambda^4=1,~ \tau\sigma\tau^{-1}=\sigma^r,~ \lambda\sigma\lambda^{-1}=\sigma^{-1},~
\tau\lambda=\lambda\tau,~ \lambda^2=\varepsilon\in L, \\
\forall \rho\in L,~ \sigma\rho=\rho\sigma,~\tau\rho=\rho\tau,~\lambda\rho\lambda^{-1}=\theta(\rho)
\end{gather*}
where $\varepsilon\in L$ is the element corresponding to $\left(\begin{smallmatrix} -1 & 0 \\ 0 & -1 \end{smallmatrix}\right)\in SL_2(\bm{F}_p)$
under the isomorphism $L\simeq SL_2(\bm{F}_p)$ and $\theta$ is the automorphism on $L$ induced from the automorphism $\theta_0$ on $SL_2(\bm{F}_p)$ defined by
\begin{align*}
\theta_0:{} & SL_2(\bm{F}_p) \to SL_2(\bm{F}_p) \\
& \rho \mapsto \begin{pmatrix} 0 & -1 \\ \omega & 0 \end{pmatrix}
\rho
\begin{pmatrix} 0 & -1 \\ \omega & 0 \end{pmatrix}^{-1}
\end{align*}
where $\omega$ is a generator of the multiplicative group
$\bm{F}_p\backslash \{0\}$ and $r^n\equiv 1 \pmod{m}$,
$\gcd\{m,n(r-1)\}=\gcd\{mn,p(p^2-1)\}=1$.
\end{enumerate}
\end{theorem}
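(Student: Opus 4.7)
The plan is to reduce the classification to a combination of Burnside's normal $p$-complement theorem, the Brauer--Suzuki theorem on groups with generalized quaternion Sylow $2$-subgroups, and the Gorenstein--Walter classification of simple groups with dihedral Sylow $2$-subgroups, then to analyze the resulting extensions using the $GZ$-hypothesis on odd Sylows. First I would observe, via Theorem \ref{t2.4}, that the Sylow $p$-subgroups for odd $p$ are cyclic and the Sylow $2$-subgroup $P_2$ is cyclic or generalized quaternion. The cyclic case is impossible: by Burnside's normal $p$-complement theorem $G$ would have a normal subgroup $N$ of odd order with $G/N$ cyclic, and $N$ is then a $Z$-group, hence meta-cyclic (Theorem \ref{t3.3}), which would make $G$ solvable. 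So $P_2$ is generalized quaternion.

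Next, let $O(G)$ denote the largest normal subgroup of odd order and apply Brauer--Suzuki: $G/O(G)$ contains a unique involution $z$, which is central. Set $\bar{G}=G/(O(G)\langle z\rangle)$; since $P_2$ is generalized quaternion, $\bar{P_2}$ is dihedral (or Klein four). I would then use Gorenstein--Walter: a non-abelian simple section of $\bar{G}$ with dihedral Sylow $2$-subgroup is $PSL_2(\mathbb{F}_q)$ ($q$ odd) or $A_7$. The $GZ$-condition on odd Sylows rules out $A_7$ (its Sylow $3$-subgroup is elementary abelian of rank $2$), and for $PSL_2(\mathbb{F}_q)$ the same condition on the Sylow $\ell$-subgroups of order dividing $q^2-1$ forces $q=p$ to be prime with $p\ge 5$. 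Because the Schur multiplier of $PSL_2(\mathbb{F}_p)$ has order $2$ and $P_2$ is generalized quaternion, the preimage of this simple quotient in $G$ must be the universal central extension $SL_2(\mathbb{F}_p)$ (an ordinary split would force $P_2$ to have a $C_2\times C_2$ subgroup).

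At this point I would write $L\le G$ for this copy of $SL_2(\mathbb{F}_p)$ and study $C_G(L)$ and $N_G(L)$. Since $\gcd(|L|,|O(G)|)$ must be $1$ (the primes dividing $|SL_2(\mathbb{F}_p)|$ are forced to contribute to $L$ by the $p^2$-condition and the fact that $L$ contains a full Sylow for each such prime), we obtain $G=L\cdot C_G(L)$ with $L\cap C_G(L)=\langle z\rangle=Z(L)$. Set $H_0=C_G(L)$; this is a $GZ$-group containing $z$, and its image in $G/\langle z\rangle$ is a solvable $GZ$-group, hence meta-cyclic of type (I) of Theorem \ref{t2.7} by the solvable classification (the even types are excluded because $H_0/\langle z\rangle$ has odd-order Sylow $2$, since the generalized quaternion part of $P_2$ lies in $L$). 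Splitting off $\langle z\rangle$: if $H_0=\langle z\rangle\times H$ with $H$ a $Z$-group of order coprime to $|L|$, then $G=H\times L$ and we are in case (I); if the central extension $1\to\langle z\rangle\to H_0\to H\to 1$ does not split, or if the outer action of $H$ on $L$ is nontrivial via the outer automorphism $\theta_0$ coming from $\bigl(\begin{smallmatrix}0&-1\\ \omega&0\end{smallmatrix}\bigr)$, then a careful bookkeeping of the relations yields the presentation of case (II).

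The hardest step will be the last one: showing that the only nontrivial outer action compatible with the $GZ$-condition is $\theta_0$, and that the generators $\sigma,\tau,\lambda$ satisfy exactly the listed relations with the arithmetic conditions $r^n\equiv 1\pmod m$, $\gcd(m,n(r-1))=\gcd(mn,p(p^2-1))=1$. This requires computing $\operatorname{Out}(SL_2(\mathbb{F}_p))=\langle\theta_0\rangle\simeq C_2$, tracking how $\lambda$ (an element whose square is $z\in L$) conjugates $L$, and using the coprimality constraint $\gcd\{|N|,|G_0|\}=1$ from the Frobenius/meta-cyclic setup (Theorem \ref{t2.1}, Theorem \ref{t3.3}) to force the arithmetic relations among $m,n,r,p$.
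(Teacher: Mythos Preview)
The paper does not prove Theorem \ref{t2.8}; it is quoted as a known structure theorem with references to Suzuki, Wolf, and Johnson, and is used only as input to the retract-rationality arguments. So there is no proof in the paper to compare your proposal against.

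On the proposal itself: your outline via Burnside transfer, Brauer--Suzuki, and Gorenstein--Walter is a sensible modern route (heavier machinery than Suzuki's 1955 argument, which predates both of the latter), and the elimination of $A_7$ and of $PSL_2(\bm{F}_q)$ with $q$ a proper prime power by the $p^2$-condition is correct. However, two steps are not right as written. First, the assertion ``we obtain $G=L\cdot C_G(L)$'' is false in case (II): there $\lambda$ acts on $L$ by the outer automorphism $\theta$, so $\lambda\notin L\cdot C_G(L)$ and $[G:L\cdot C_G(L)]=2$. What you can say is that $L$ is normal and $G/(L\cdot C_G(L))$ embeds in $\operatorname{Out}(SL_2(\bm{F}_p))\simeq C_2$; the dichotomy (I)/(II) is exactly whether this index is $1$ or $2$. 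Your later case-split implicitly uses this, so the error is in the exposition rather than the strategy, but it must be fixed. Second, the passage from ``$G/(O(G)\langle z\rangle)$ has a composition factor $PSL_2(\bm{F}_p)$'' to ``there is a subgroup $L\le G$ with $L\simeq SL_2(\bm{F}_p)$'' is not free: you need to show first that a copy of $SL_2(\bm{F}_p)$ sits inside $G/O(G)$ (this uses that the extension by $\langle z\rangle$ is non-split, forced by the generalized quaternion Sylow), and then lift it through the odd-order kernel $O(G)$ via Schur--Zassenhaus. Your justification that $\gcd(|L|,|O(G)|)=1$ because ``$L$ contains a full Sylow for each such prime'' is circular at that point---the coprimality is what you need in order to split and identify $L$ inside $G$ in the first place; it should instead be argued from the cyclic-Sylow hypothesis on $G$ together with the known Sylow structure of $SL_2(\bm{F}_p)$.
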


%--------------------------t2.9
\begin{theorem} \label{t2.9}
Let $G_0$ be a non-solvable Frobenius complement. Then $G_0$ is
isomorphic to one of the groups $($I$)$ or $($II$)$ in Theorem
\ref{t2.8} with $p=5$.
\end{theorem}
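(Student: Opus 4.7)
The plan is to combine Suzuki's classification of non-solvable $GZ$-groups (Theorem \ref{t2.8}) with Zassenhaus's theorem on perfect Frobenius complements (Theorem \ref{t2.6}). By Theorem \ref{t1.6}(2) every Frobenius complement is a $GZ$-group, so Theorem \ref{t2.8} already pins $G_0$ down to be of type (I) or type (II) for some prime $p\ge 5$; the only thing left to show is that this prime must equal $5$.

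The key observation is that in both cases $G_0$ visibly contains a copy of $SL_2(\bm{F}_p)$: as the direct factor in type (I), and as the distinguished subgroup $L$ in type (II). I would then invoke the standard (and elementary) hereditary property of Frobenius complements: if $G=N\rtimes G_0$ is a Frobenius group with kernel $N$, then for any non-trivial $H\le G_0$ the subgroup $H$ still acts fixed-point freely on $N$, so $N\rtimes H$ is itself a Frobenius group with complement $H$. Applying this with $H=SL_2(\bm{F}_p)$ shows that $SL_2(\bm{F}_p)$ is a Frobenius complement in its own right.

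For any prime $p\ge 5$ the group $SL_2(\bm{F}_p)$ is perfect, because $PSL_2(\bm{F}_p)$ is a non-abelian simple group and hence coincides with its own commutator subgroup, which lifts to $[SL_2(\bm{F}_p),SL_2(\bm{F}_p)]=SL_2(\bm{F}_p)$. By Theorem \ref{t2.6} a perfect Frobenius complement is forced to be isomorphic to $SL_2(\bm{F}_5)$. Comparing orders via $|SL_2(\bm{F}_p)|=p(p^2-1)$ and noting that this function of $p$ is strictly increasing on primes $\ge 5$, the isomorphism $SL_2(\bm{F}_p)\simeq SL_2(\bm{F}_5)$ immediately forces $p=5$, completing the proof.

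There is really no serious obstacle here; the argument is essentially a one-line combination of Theorems \ref{t1.6}, \ref{t2.6} and \ref{t2.8} once one spots the copy of $SL_2(\bm{F}_p)$ sitting inside each of the two non-solvable Suzuki types and remembers that subgroups inherit the property of being a Frobenius complement. The only point that deserves a moment's attention is the perfectness of $SL_2(\bm{F}_p)$ for $p\ge 5$, which is exactly what lets Zassenhaus's rigidity result collapse the whole parameter family to the single value $p=5$.
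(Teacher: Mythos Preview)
Your proposal is correct and follows essentially the same argument as the paper: reduce to the classification in Theorem~\ref{t2.8} via Theorem~\ref{t1.6}, observe that a copy of $SL_2(\bm{F}_p)$ sits inside $G_0$ and inherits the Frobenius-complement property, then invoke Zassenhaus (Theorem~\ref{t2.6}) on this perfect subgroup to force $p=5$. Your write-up is in fact slightly more explicit than the paper's, spelling out the perfectness of $SL_2(\bm{F}_p)$ for $p\ge 5$ and the order comparison that yields $p=5$.
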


\begin{proof}
By Theorem \ref{t1.6}, $G_0$ is a non-solvable $GZ$-group.
Hence we may apply Theorem \ref{t2.8}.
It remains to show that $p=5$.

Since $L$ is a subgroup of $G_0$, $L$ is also a Frobenius
complement (reason: If $N\rtimes G_0$ is a Frobenius group, then
$N\rtimes L$ is also a Frobenius group by definition.).

Since $L\simeq SL_2(\bm{F}_p)$ is a perfect group (see, for example, \cite[page 74, 3.2.13]{Ro}),
by Theorem \ref{t2.6}, it is necessary that $p=5$.
\end{proof}

\medskip
Frobenius complements appear as the groups satisfying all the $pq$
conditions in the list of the above Theorem \ref{t2.7}, Theorem
\ref{t2.8} and Theorem \ref{t2.9}. Victor Mazurov informed us many
of his works about Frobenius groups; most of them were written in
Russian under the name V. D. Mazurov. For example, if $G$ is a
sovable group satisfying all the $pq$ conditions in the list of
Theorem \ref{t2.7}, then $G$ is a Frobenius complement. It is
known that $SL_2(\bm{F}_5)$ is a Frobenius complement \cite[page
500]{Hu}; Mazurov had another proof of it. However, it is still
unknown whether non-solvable groups in the list of Theorem
\ref{t2.8} and Theorem \ref{t2.9} satisfying all the $pq$
conditions are eligible Frobenius complements. The following
result is implicit in Zassenhaus's paper \cite{Za1} and is
contained in one of Mazurov's Russian papers.

\medskip
\begin{theorem} \label{t2.10}
Let $G$ be a finite group. Then $G$ is a Frobenius complement if
and only if the subgroup of $G$ generated by all elements of
$($various$)$ prime orders is isomorphic to $C_n \times H$ where
$n$ is a square-free integer, $\gcd\{n, |H| \}=1$ with $H=\{ 1 \},
SL_2(\bm{F}_3)$, or $SL_2(\bm{F}_5)$.
\end{theorem}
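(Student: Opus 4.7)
The plan is to prove Theorem \ref{t2.10} by invoking the classifications of $GZ$-groups in Theorems \ref{t2.7} and \ref{t2.8} and analyzing the characteristic subgroup $P := \langle g \in G : \fn{ord}(g) \text{ is a prime}\rangle$ case by case.

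For the forward direction, suppose $G$ is a Frobenius complement. Then by Theorems \ref{t1.6} and \ref{t2.5}, $G$ is a $GZ$-group satisfying all $pq$-conditions, so $G$ appears in Theorem \ref{t2.7} (solvable case) or, by Theorem \ref{t2.9}, in Theorem \ref{t2.8} with $p=5$ (non-solvable case). Since each odd Sylow of $G$ is cyclic (hence contains a unique $C_p$) and the Sylow-$2$ is cyclic or generalized quaternion (hence has a unique involution), the prime-order elements of $G$ are highly constrained. I would then compute $P$ for each type in turn. In types (I) and (II) of Theorem \ref{t2.7}, the unique involution (central in the generalized quaternion case) commutes with all odd prime-order elements, and the $pq$-conditions together with the coprimality $\gcd(m,n(r-1))=1$ force $P$ to be cyclic of squarefree order, so $H=\{1\}$. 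In types (III) and (IV), the relations $\lambda^2=\rho^2=(\lambda\rho)^2$ exhibit $\langle\lambda,\rho\rangle\simeq Q_8$, and together with a $C_3$ inside $\langle\tau\rangle$ (guaranteed by $3\mid n$) they generate $Q_8\rtimes C_3\simeq SL_2(\bm{F}_3)$; the centralizing relations $\lambda\sigma\lambda^{-1}=\sigma=\rho\sigma\rho^{-1}$ show this $SL_2(\bm{F}_3)$ commutes with the residual cyclic squarefree piece, giving $P\cong C_n\times SL_2(\bm{F}_3)$. For Theorem \ref{t2.8} type (I), $P\cong C_n\times SL_2(\bm{F}_5)$ is immediate from the direct product structure. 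For type (II), $\lambda$ has order $4$ with $\lambda^2=\varepsilon$ already the central involution of $L\simeq SL_2(\bm{F}_5)$, so $\lambda\notin P$; the relations $\sigma\rho=\rho\sigma$ and $\tau\rho=\rho\tau$ for $\rho\in L$ ensure $P$ again splits as $C_n\times SL_2(\bm{F}_5)$.

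For the backward direction, assume $P\cong C_n\times H$ with $n$ squarefree, $\gcd(n,|H|)=1$, and $H\in\{1,SL_2(\bm{F}_3),SL_2(\bm{F}_5)\}$. Every prime-order element of $G$ lies in $P$, and inspecting the Sylow subgroups of $P$ (cyclic for odd primes, cyclic or $Q_8$ for $p=2$) shows that $G$ contains a unique subgroup of order $p$ for each prime $p$; equivalently every abelian subgroup of $G$ is cyclic, so by Theorem \ref{t2.4}, $G$ is a $GZ$-group. For the $pq$-condition with $p\ne q$: any subgroup $K\leq G$ of order $pq$ is generated by an element of order $p$ and one of order $q$ (Cauchy), hence lies in $P$, and a short case split on whether each prime divides $n$ or $|H|$ (using $\gcd(n,|H|)=1$ and the explicit order-$pq$ subgroups of $SL_2(\bm{F}_3)$ and $SL_2(\bm{F}_5)$, which are all cyclic) forces $K$ to be cyclic. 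The $p^2$-condition is immediate from uniqueness of the order-$p$ subgroup. Matching $G$ against Theorems \ref{t2.7} and \ref{t2.8} and invoking Theorem \ref{t2.6} together with the Mazurov/Zassenhaus results cited just before Theorem \ref{t2.10}, I would identify $G$ as a Frobenius complement of the appropriate type.

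The main obstacle is verifying, in the forward direction for types (III)-(IV) of Theorem \ref{t2.7} and type (II) of Theorem \ref{t2.8}, that $P$ decomposes as a direct product rather than merely as a semidirect product; this requires tracing the centralizer of the $Q_8\rtimes C_3$ or $L$ subgroup through the iterated-semidirect presentations and using the coprimality hypotheses to show the remaining cyclic squarefree piece commutes with it. A secondary difficulty is the non-solvable backward direction, where satisfying all $pq$-conditions is not a priori known to suffice (as noted explicitly in the paragraph before the theorem), so the stronger assumption $P\cong C_n\times SL_2(\bm{F}_5)$ must be leveraged, together with Theorem \ref{t2.6}, to pin down the precise type of $G$ in Theorem \ref{t2.8}.
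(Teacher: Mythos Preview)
The paper does not supply a proof of Theorem \ref{t2.10}; it is stated as a result ``implicit in Zassenhaus's paper \cite{Za1} and \ldots\ contained in one of Mazurov's Russian papers,'' with no argument given. So there is no in-paper proof to compare your proposal against.

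Regarding the proposal itself: the forward direction is a plausible case-by-case computation, though several of the claimed commutation relations (that in types (III)--(IV) the residual cyclic squarefree piece centralizes the copy of $SL_2(\bm{F}_3)$, and that in type (II) of Theorem \ref{t2.8} the prime-order elements of $\langle\sigma,\tau\rangle$ centralize $L$) need to be extracted carefully from the presentations and the coprimality hypotheses; you correctly flag this as the main obstacle.

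The backward direction, however, has a genuine gap in the non-solvable case. You reduce to showing that a non-solvable $GZ$-group satisfying all $pq$-conditions with $P\cong C_n\times SL_2(\bm{F}_5)$ is a Frobenius complement, and then appeal to ``the Mazurov/Zassenhaus results cited just before Theorem \ref{t2.10}.'' But the paragraph you are invoking says precisely the opposite: for solvable groups the $pq$-conditions do suffice (Mazurov), whereas for non-solvable groups the paper explicitly states that ``it is still unknown whether non-solvable groups in the list of Theorem \ref{t2.8} and Theorem \ref{t2.9} satisfying all the $pq$ conditions are eligible Frobenius complements.'' Your proposal does not explain how the extra hypothesis on $P$ closes this gap; saying that it ``must be leveraged, together with Theorem \ref{t2.6}, to pin down the precise type of $G$'' is not an argument, since Theorem \ref{t2.6} only tells you that a perfect Frobenius complement is isomorphic to $SL_2(\bm{F}_5)$, not that a group of the right shape \emph{is} a Frobenius complement. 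This is exactly the step for which the paper defers to the external literature rather than giving a self-contained proof.
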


%------------------------------------------S3
\section{Preliminaries}

We recall several known results of rationality problems in this section,
which will be used later.

\begin{theorem}[{Saltman \cite[Theorem 3.1 and Theorem 3.5; Ka4, Theorem 3.5]{Sa1}}] \label{t1.11}
Let $k$ be an infinite field, $G=N\rtimes G_0$ where $N$ is a
normal subgroup of $G$ with $G_0$ acting on $N$.

{\rm (1)} If $k(G)$ is retract $k$-rational, so is $k(G_0)$.

{\rm (2)} Assume furthermore that $N$ is abelian and
$\gcd\{|N|,|G_0|\}=1$. If both $k(N)$ and $k(G_0)$ are retract
$k$-rational, so is $k(G)$.
\end{theorem}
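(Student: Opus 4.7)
The plan is to use the equivalence stated just after Definition~\ref{d1.1} between retract $k$-rationality of $k(G)$ and the existence of a generic $G$-Galois extension over $k$---a $G$-Galois ring extension $B/A$ with $A$ a localization of a polynomial $k$-algebra, such that every $G$-Galois extension of a field $F\supset k$ arises as $B\otimes_A F$ for some $k$-algebra specialization $A\to F$. Both parts of the theorem then reduce to producing such a generic object for $G_0$ (in part~(1)) or for $G$ (in part~(2)) out of the given data.

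For part~(1), I would start with a generic $G$-Galois extension $B/A$ supplied by the hypothesis on $k(G)$. Since $N\triangleleft G$, the $N$-fixed subring $B^N$ is naturally a $G/N\cong G_0$-Galois extension of $A$. To verify that $B^N/A$ is generic, I would take an arbitrary $G_0$-Galois field extension $L/F$ with $F\supset k$ and form the induced $G$-Galois étale $F$-algebra $\tilde L=\mathrm{Ind}_{G_0}^G L\cong\prod_{gG_0\in G/G_0}L$, on which $G$ permutes the factors and acts on each via the $G_0$-action on $L$. Genericity of $B/A$ then provides a specialization $A\to F$ with $B\otimes_A F\cong\tilde L$ as $G$-algebras, and taking $N$-invariants yields $B^N\otimes_A F\cong\tilde L^N\cong L$, as required.

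For part~(2), the plan is to glue generic $N$- and $G_0$-Galois extensions $C/D$ and $C'/D'$ (supplied by the hypotheses) into a generic $G$-Galois extension. The essential observation is that, because $N$ is abelian and $\gcd\{|N|,|G_0|\}=1$, any $G$-Galois extension $M/F$ of a field splits as a $G_0$-Galois extension $M^N/F$ together with an abelian $N$-Galois extension $M/M^N$, the $G_0$-action on the latter being prescribed by the structural homomorphism $G_0\to\mathrm{Aut}(N)$. Since $N$ is abelian, its Galois theory over $F$ is Kummer-theoretic and the parameter space carries a linear $G_0$-action; I would twist the generic $N$-extension $C/D$ by this action and then tensor, over a suitably combined base algebra, with $C'/D'$ to produce a candidate generic $G$-Galois extension.

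The main obstacle lies in part~(2): verifying that the glued object is genuinely generic, i.e.\ that \emph{every} $G$-Galois extension of a field $F\supset k$---not merely those already in visible product form---is realized by a specialization. The coprimality $\gcd\{|N|,|G_0|\}=1$ is indispensable, since it kills the cohomological obstructions to reconstructing $M$ from its $N$- and $G_0$-pieces (via Schur--Zassenhaus and vanishing of coprime-order cohomology), and the abelianness of $N$ is precisely what makes the $G_0$-twisting of the $N$-parameter space a linear, hence specializable, operation.
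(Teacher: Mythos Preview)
The paper does not supply its own proof of this theorem; it is listed in Section~3 (``Preliminaries'') as a known result of Saltman, with citations to \cite[Theorems~3.1 and~3.5]{Sa1} and \cite[Theorem~3.5]{Ka4}, and is used throughout Section~4 as a black box. So there is no ``paper's proof'' to compare against directly.

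That said, your strategy---passing through the equivalence between retract $k$-rationality of $k(G)$ and the existence of a generic $G$-Galois extension over $k$---is precisely the framework in which Saltman established these results in \cite{Sa1}. Your argument for part~(1) is essentially the correct one: taking $N$-invariants of a generic $G$-Galois extension yields a generic $G/N\cong G_0$-Galois extension over the same base, and the induction $L\mapsto\mathrm{Ind}_{G_0}^G L$ is the standard device for verifying the specialization property. This matches Saltman's Theorem~3.1.

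For part~(2) your outline names the right ingredients (coprimality via Schur--Zassenhaus, abelianness of $N$ making the twisting linear), but as you yourself flag, the actual construction of the glued generic extension and the verification that \emph{every} $G$-Galois field extension is hit by a specialization is where the substance lies. What you have written is an honest plan rather than a proof; filling it in amounts to reproducing Saltman's Theorem~3.5 (or the treatment in \cite{Ka4}), which is several pages of work. There is no wrong idea here, only an acknowledged gap between the sketch and a complete argument.
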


%--------------------------t3.1
\begin{theorem}[{\cite[Theorem 4.12; Ka4, Theorem 3.7]{Sa2}}] \label{t3.1}
Let $k$ be an infinite field and $G$ be a finite abelian group of
exponent $e=2^r s$ with $2\nmid s$. Then $k(G)$ is retract
$k$-rational if and only if $\fn{char}k=2$ or $k(\zeta_{2^r})$ is
a cyclic extension of $k$.
\end{theorem}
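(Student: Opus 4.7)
The plan is to reduce both directions to the single cyclic group $C_{2^r}$ and then handle that case via a cohomological invariant of the character lattice. First, write $G = G^{(2)} \times G^{(o)}$ where $G^{(2)}$ is the $2$-Sylow subgroup and $G^{(o)}$ has odd order. Since $\gcd\{|G^{(2)}|,|G^{(o)}|\} = 1$, Theorem \ref{t1.11}(1) applied to both semidirect factorizations of this direct product, together with Theorem \ref{t1.11}(2), gives that $k(G)$ is retract $k$-rational if and only if both $k(G^{(2)})$ and $k(G^{(o)})$ are. Iterating the same argument over the odd Sylow decomposition reduces the odd part to individual cyclic groups $C_{p^a}$ with $p$ odd.

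For such a cyclic $p$-group with $\fn{char}k \ne p$, the cyclotomic extension $k(\zeta_{p^a})/k$ has Galois group embedded in the cyclic group $(\bm{Z}/p^a\bm{Z})^{\times}$, so it is itself cyclic; the standard norm-one torus construction (equivalently, an explicit generic $C_{p^a}$-extension obtained by cyclic Galois descent) then shows $k(C_{p^a})$ is retract $k$-rational. When $\fn{char}k = p$, Artin--Schreier--Witt theory produces a rational generic extension directly. Thus the odd part is unconditionally retract $k$-rational and can be dropped, leaving the problem for $k(G^{(2)})$.

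Next, decompose $G^{(2)} = C_{2^r} \times H$ with $H$ an abelian $2$-group of exponent dividing $2^r$. The \emph{only if} direction follows from Theorem \ref{t1.11}(1) applied to the direct (hence semidirect) product $H \rtimes C_{2^r}$: retract rationality of $k(G^{(2)})$ forces retract rationality of $k(C_{2^r})$. For the \emph{if} direction, assume either $\fn{char}k = 2$, handled by Artin--Schreier--Witt for the whole $2$-group $G^{(2)}$, or $k(\zeta_{2^r})/k$ cyclic; in the latter case every subextension $k(\zeta_{2^{a_i}})/k$ inherits cyclicity, and one assembles a generic $G^{(2)}$-extension from the cyclic pieces using a lattice resolution of the character module of $G^{(2)}$ as a module over $\fn{Gal}(k(\zeta_{2^r})/k)$.

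The decisive step, and the main obstacle, is the equivalence for $G = C_{2^r}$ when $\fn{char}k \ne 2$. Here one identifies retract rationality of $k(C_{2^r})$ with the triviality of a specific cohomological invariant --- the flasque class of the character lattice viewed as a $\fn{Gal}(k(\zeta_{2^r})/k)$-module --- or equivalently, the vanishing of a distinguished class in the unramified Brauer group. When this Galois group is cyclic, a flasque permutation resolution exists and produces retract rationality directly. When it is non-cyclic (which for $r \ge 3$ forces a subquotient isomorphic to $C_2 \times C_2$), one restricts to the corresponding biquadratic subextension and exhibits an associated cyclic algebra representing a non-trivial class in the unramified Brauer group, thereby obstructing retract rationality. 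Matching cyclicity of the cyclotomic Galois group with the triviality of this flasque class is where the real content of the theorem resides.
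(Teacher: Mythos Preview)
The paper does not prove Theorem~\ref{t3.1}; it is quoted from Saltman \cite[Theorem~4.12]{Sa2} and \cite[Theorem~3.7]{Ka4}. So there is no ``paper's own proof'' to compare against, and your proposal should be read as an attempted reconstruction of Saltman's argument.

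Your reduction steps are sound. Splitting off the odd part via Theorem~\ref{t1.11} is correct (and you could streamline by invoking Theorem~\ref{t3.2}(1) for direct products, which needs no coprimality), and the odd-prime cyclic case is indeed unconditional because $(\bm{Z}/p^a\bm{Z})^\times$ is cyclic. The further reduction of the $2$-part to the single cyclic factor $C_{2^r}$ via Theorem~\ref{t1.11}(1) for the ``only if'' direction is also fine; for the ``if'' direction with non-coprime $2$-power factors you cannot use Theorem~\ref{t1.11}(2), and you correctly switch to assembling a generic extension (Theorem~\ref{t3.2}(1) would again suffice here once each cyclic $2$-factor is handled).

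The gap is in the decisive step, which you yourself flag as ``where the real content of the theorem resides.'' You assert that retract rationality of $k(C_{2^r})$ is equivalent to triviality of the flasque class of the character lattice, and that non-cyclicity of $\fn{Gal}(k(\zeta_{2^r})/k)$ produces a nonzero unramified Brauer class, but you do not actually carry out either computation. In Saltman's original treatment this is the heart of the matter: one must exhibit, when the Galois group contains $C_2\times C_2$, an explicit nontrivial element in $\fn{Br}_{v,k}(k(C_{2^r}))$ (or equivalently show the relevant flasque lattice is not invertible), and conversely produce a flasque--coflasque resolution by permutation lattices in the cyclic case. Your proposal names the right objects but does not supply the argument; as written it is an accurate plan, not a proof.
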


%-------------------------t3.2
\begin{theorem} \label{t3.2}
Let $G_1$ and $G_2$ be finite groups.
\begin{enumerate}
\item[{\rm (1)}] {\rm (Saltman \cite[Theorem 1.5]{Sa1})}
If $k$ is an infinite field and both $k(G_1)$ and $k(G_2)$ are retract $k$-rational,
then $k(G_1\times G_2)$ is also retract $k$-rational.
\item[{\rm (2)}] {\rm (Kang and Plans \cite[Theorem 1.3]{KP})}
For any field $k$, if both $k(G_1)$ and $k(G_2)$ are $k$-rational,
then $k(G_1\times G_2)$ is also $k$-rational.
\end{enumerate}
\end{theorem}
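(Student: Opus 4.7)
I would address the two parts of Theorem \ref{t3.2} by different routes.

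For part (1), the plan is to invoke the equivalence (recalled after Definition \ref{d1.1}) between retract $k$-rationality of $k(G)$ and the existence of a generic $G$-Galois extension over $k$, and then show that the ``product'' of two generic extensions is generic for the product group. Concretely, suppose $A_i \subset k(G_i)$ together with a polynomial ring $k[X_1^{(i)},\ldots,X_{n_i}^{(i)}][1/f_i]$ and $k$-algebra maps $\varphi_i, \psi_i$ witness retract $k$-rationality of $k(G_i)$ ($i=1,2$). Working inside a natural copy of $k(G_1) \cdot k(G_2) \subset k(G_1 \times G_2)$, the plan is to take $A = A_1 \cdot A_2$ (a suitable localization of $A_1 \otimes_k A_2$) together with the polynomial ring obtained by adjoining both sets of indeterminates and inverting $f_1 f_2$, and the product maps $\varphi = \varphi_1 \otimes \varphi_2$ and $\psi = \psi_1 \otimes \psi_2$. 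Since $\psi_i \circ \varphi_i = 1_{A_i}$, the composition $\psi \circ \varphi$ is the identity on $A$, giving the required retraction. The only conceptual subtlety is checking that $A$ does lie in $k(G_1 \times G_2)$ with quotient field equal to it, which follows from the obvious embedding of $k(G_1) \otimes_k k(G_2)$ into $k(G_1 \times G_2)$ via the factorization of the regular representation of $G_1 \times G_2$.

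For part (2), let $V_i$ denote the regular $k[G_i]$-module. Then $V_1 \otimes_k V_2$ is the regular $k[G_1 \times G_2]$-module, so $k(G_1 \times G_2) = k(V_1 \otimes V_2)^{G_1 \times G_2}$. The plan is to take $G_1$-invariants first and then $G_2$-invariants. As a $G_1$-module, $V_1 \otimes V_2$ is isomorphic to $V_1^{\oplus |G_2|}$ with $G_2$ permuting the summands through its regular action. The no-name lemma (Endo--Miyata), applied to the faithful $G_1$-module $V_1$, gives a $G_1$-equivariant transcendence basis making $k(V_1 \otimes V_2)^{G_1}$ a purely transcendental extension of $k(V_1)^{G_1} = k(G_1)$; moreover, this basis can be chosen compatibly with the $G_2$-action so that $G_2$ fixes $k(G_1)$ pointwise and permutes the new indeterminates through its regular representation on $|G_1|$ distinguished copies. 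Invoking the $k$-rationality of $k(G_1)$ then exhibits $k(V_1 \otimes V_2)^{G_1}$ as a purely transcendental extension of $k$ carrying a $G_2$-action whose $G_2$-invariants, via the $k$-rationality of $k(G_2)$ together with a Miyata-type step for the extra ``$G_2$-fixed'' indeterminates coming from $k(G_1)$, form a $k$-rational field. This field is $k(G_1 \times G_2)$.

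The main obstacle is in part (2): arranging the no-name transcendence basis for the $G_1$-quotient so that the remaining $G_2$-action is the standard regular one (possibly plus trivial coordinates), rather than some twisted multiplicative variant for which the hypothesis on $k(G_2)$ provides no leverage. A priori the no-name lemma only produces a basis of $G_1$-invariants, so enforcing simultaneous $G_2$-equivariance of the basis requires a careful refinement; this compatibility is the technical heart of the Kang--Plans argument. By contrast, part (1) is essentially formal once one has the generic-extension interpretation of retract rationality, reflecting the robustness of retract rationality under direct products compared to rationality itself.
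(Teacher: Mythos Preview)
The paper gives no proof of Theorem \ref{t3.2}; both parts are simply quoted from the cited references (Saltman for (1), Kang--Plans for (2)), so there is no in-paper argument to compare against. Your outlines are plausible reconstructions of those external proofs, but part (1) contains a genuine error in the ``concrete'' construction.

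You claim that the quotient field of $A = A_1 \cdot A_2$ (the image of $A_1 \otimes_k A_2$) inside $k(G_1 \times G_2)$ is all of $k(G_1 \times G_2)$. This fails on transcendence-degree grounds: $k(G_i)$ has transcendence degree $|G_i|$ over $k$, so the fraction field of $A_1 \otimes_k A_2$ has transcendence degree $|G_1| + |G_2|$, whereas $k(G_1 \times G_2)$ has transcendence degree $|G_1|\cdot |G_2|$. These coincide only in trivial cases, so the embedding of $k(G_1)\otimes_k k(G_2)$ into $k(G_1\times G_2)$ cannot have dense image, and your $A$ does not witness retract rationality of $k(G_1\times G_2)$. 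The fix is exactly what you mention in your opening sentence but then abandon: use Saltman's equivalence between retract $k$-rationality of $k(G)$ and the existence of a generic $G$-Galois extension over $k$. The tensor product (suitably localized) of generic $G_i$-extensions is a generic $(G_1\times G_2)$-extension, and the equivalence converts this back to retract rationality of $k(G_1\times G_2)$. That final conversion is itself a nontrivial theorem of Saltman, not the naive tensor computation you wrote down; the number of parameters in the resulting generic extension bears no direct relation to $|G_1\times G_2|$.

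For part (2), your sketch matches the shape of the Kang--Plans argument, and you correctly isolate the crux: producing a $G_1$-invariant transcendence basis on which the residual $G_2$-action is the standard regular one (plus fixed coordinates), rather than some twisted monomial action. You are honest that this $G_2$-equivariance is not supplied by the ordinary no-name lemma and constitutes the technical content of \cite{KP}; as a proof \emph{sketch} this is adequate, but it is not a self-contained argument.
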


%-----------------------t3.5
\begin{theorem}[{Ahmad, Hajja and Kang \cite[Theorem 3.1]{AHK}}] \label{t3.5}
Let $L$ be any field, $L(x)$ be the rational function field in one
variable over $L$, and $G$ be a finite group acting on $L(x)$.
Suppose that, for any $\sigma\in G$, $\sigma(L)\subset L$ and
$\sigma(x)=a_\sigma \cdot x+b_\sigma$ where $a_\sigma, b_\sigma
\in L$ and $a_\sigma \ne 0$. Then $L(x)^G=L^G(f)$ for some
polynomial $f\in L[x]$. In fact, if $m=\min \{\fn{deg}
g(x):g(x)\in L[x]^G\backslash L^G\}$, any polynomial $f\in L[x]^G$
with $\fn{deg} f=m$ satisfies the property $L(x)^G=L^G(f)$.
\end{theorem}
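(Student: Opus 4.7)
The plan is to show $L^G(f) = L(x)^G$ by comparing degrees, which forces the minimal degree $m$ to coincide with $|H|$, where $H := \ker(G \to \fn{Aut}(L))$. I first reduce to the case where $G$ acts faithfully on $L(x)$ by replacing $G$ with its quotient by that kernel. Then $H \trianglelefteq G$ and $L/L^G$ is Galois with group $G/H$, so $[L:L^G] = |G|/|H|$.

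For any $f \in L[x]^G$ of degree $m$ in $x$, the tower $L^G(f) \subseteq L(f) \subseteq L(x)$ yields $[L(x):L^G(f)] = [L(x):L(f)] \cdot [L(f):L^G(f)] = m \cdot [L:L^G]$, since $x$ is algebraic of degree $m$ over $L(f)$ and $f$ is transcendental over $L^G$. Combined with Artin's identity $[L(x):L(x)^G] = |G|$ and the inclusion $L^G(f) \subseteq L(x)^G$, this gives $m \ge |H|$, with equality precisely when $L^G(f) = L(x)^G$.

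To produce the reverse inequality $m \le |H|$ I exhibit a $G$-invariant polynomial of degree $|H|$ in two steps. First, since $H$ fixes $L$, the polynomial $f_H := \prod_{\sigma \in H} \sigma(x) \in L[x]^H$ has leading coefficient $\prod a_\sigma \ne 0$ and $x$-degree $|H|$, and the same degree count (with $L^H = L$) yields $L(x)^H = L(f_H)$ and $L[x]^H = L[f_H]$. Second, the quotient $G/H$ acts on $L(f_H)$ preserving $L$; since $H$ is normal, any $\tau \in G$ sends $L[x]^H = L[f_H]$ to itself, and because $\tau$ preserves $x$-degrees, $\tau(f_H) = \alpha_\tau f_H + \beta_\tau$ with $\alpha_\tau, \beta_\tau \in L$. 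Thus $G/H$ acts affinely on $f_H$ and faithfully on $L$; Hilbert 90 trivializes the multiplicative cocycle $\tau \mapsto \alpha_\tau$ as $\alpha_\tau = \alpha/\tau(\alpha)$ for some $\alpha \in L^\times$, and a parallel additive Hilbert 90 argument then supplies $\beta \in L$ making $\alpha f_H + \beta$ a $G$-invariant polynomial of $x$-degree $|H|$.

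The main obstacle is justifying the affine form of the $G/H$-action on $f_H$, which rests on the identity $L[x] \cap L(f_H) = L[f_H]$: a polynomial in $x$ that is a rational function of $f_H$ can have no finite poles in $x$, so (since $f_H(x) = c$ has solutions in the algebraic closure for every $c$) the denominator, as a polynomial in $f_H$, must be constant. With $m = |H|$ in hand, the degree identity $[L(x):L^G(f)] = m|G|/|H| = |G| = [L(x):L(x)^G]$ together with $L^G(f) \subseteq L(x)^G$ forces equality $L^G(f) = L(x)^G$ for every $f \in L[x]^G$ of minimal degree $m$, completing the proof.
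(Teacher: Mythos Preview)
The paper does not supply its own proof of this theorem; it is quoted as a known result with a citation to \cite{AHK}. So there is nothing in the present paper to compare your argument against.

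That said, your proof is correct and self-contained. The degree computation $[L(x):L^G(f)]=m\cdot[L:L^G]$ is justified exactly as you describe (using that $f$ is transcendental over $L$ and that $L\cap L^G(f)=L^G$), and combined with Artin's identity $[L(x):L(x)^G]=|G|$ it gives $m\ge |H|$ with equality precisely when $L^G(f)=L(x)^G$. Your construction of a $G$-invariant polynomial of degree $|H|$ is also sound: $f_H=\prod_{\sigma\in H}\sigma(x)$ has degree $|H|$ and is $H$-invariant; normality of $H$ together with the integrality argument $L[x]\cap L(f_H)=L[f_H]$ forces $\tau(f_H)=\alpha_\tau f_H+\beta_\tau$ for every $\tau\in G$; and then multiplicative and additive Hilbert~90 for the Galois extension $L/L^G$ (both valid in arbitrary characteristic, the additive version via the normal basis theorem) let you straighten this to a genuine $G$-invariant of the same degree. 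The only point worth making explicit is that your reduction to a faithful $G$-action on $L(x)$ preserves all three objects $L(x)^G$, $L^G$, and $L[x]^G$ simultaneously, so the statement for the quotient implies the statement for $G$; you note this but it deserves a sentence.
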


%-----------------------t3.6
\begin{theorem}[{Hajja and Kang \cite[Theorem 1]{HK}}] \label{t3.6}
Let $G$ be a finite group acting on $L(x_1,\ldots,x_n)$,
the rational function field in $n$ variables over a field $L$.
Suppose that
\begin{enumerate}
\item[{\rm (i)}]
for any $\sigma \in G$, $\sigma(L)\subset L$,
\item[{\rm (ii)}]
the restriction of the action of $G$ to $L$ is faithful,
\item[{\rm (iii)}]
for any $\sigma\in G$,
\[
\begin{pmatrix} \sigma(x_1) \\ \sigma(x_2) \\ \vdots \\ \sigma(x_n) \end{pmatrix}
=A(\sigma) \cdot \begin{pmatrix} x_1 \\ x_2 \\ \vdots \\ x_n \end{pmatrix}+B(\sigma)
\]
where $A(\sigma)\in GL_n(L)$ and $B(\sigma)$ is an $n\times 1$ matrix over $L$.
\end{enumerate}

Then there exist elements $z_1,\ldots,z_n\in L(x_1,\ldots,x_n)$ so that $L(x_1,\ldots,x_n)=L(z_1$, $\ldots,z_n)$ and $\sigma(z_i)=z_i$ for any $\sigma\in G$, any $1\le i\le n$.
\end{theorem}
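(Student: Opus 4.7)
The plan is to recognize the affine action as a pair of cocycles and to trivialize them in turn, using first the multiplicative and then the additive version of Hilbert's Theorem~90. By hypothesis (ii) the $G$-action on $L$ is faithful, so Artin's theorem makes $L/L^G$ a finite Galois extension with group $G$, and all the cohomological invocations below are legitimate. Writing $\underline{x}=(x_1,\ldots,x_n)^T$ and computing $(\sigma\tau)(\underline{x})$ in two ways — directly, and as $\sigma(\tau(\underline{x}))$, remembering that $\sigma$ acts on the coefficient matrices as well as on $\underline{x}$ — will yield the identities
\begin{align*}
A(\sigma\tau) &= \sigma(A(\tau))\cdot A(\sigma),\\
B(\sigma\tau) &= \sigma(A(\tau))\cdot B(\sigma)+\sigma(B(\tau)).
\end{align*}

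First I would use the first identity, which exhibits $\sigma\mapsto A(\sigma)$ as a $1$-cocycle with values in $GL_n(L)$ (with the twisted convention $c_{\sigma\tau}=\sigma(c_\tau)c_\sigma$ forced by the left action). Speiser's theorem, asserting $H^1(G,GL_n(L))=1$ for a finite Galois extension, then produces $Q\in GL_n(L)$ with $A(\sigma)=\sigma(Q)\cdot Q^{-1}$ for every $\sigma\in G$. Setting $\underline{y}:=Q^{-1}\underline{x}$ gives
\[
\sigma(\underline{y})=\sigma(Q^{-1})\bigl[A(\sigma)\underline{x}+B(\sigma)\bigr]=Q^{-1}\underline{x}+\sigma(Q^{-1})B(\sigma)=\underline{y}+D(\sigma),
\]
where $D(\sigma):=\sigma(Q^{-1})B(\sigma)\in L^n$. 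Feeding $A(\sigma)=\sigma(Q)Q^{-1}$ into the second displayed identity shows $D(\sigma\tau)=D(\sigma)+\sigma(D(\tau))$, i.e., $D$ is an additive $1$-cocycle.

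Next, to kill the translation, the normal basis theorem gives $H^1(G,L)=0$ and hence $H^1(G,L^n)=0$, so $D(\sigma)=\sigma(e)-e$ for some $e\in L^n$. Defining $\underline{z}:=\underline{y}-e=Q^{-1}\underline{x}-e$, a direct computation gives
\[
\sigma(\underline{z})=\sigma(\underline{y})-\sigma(e)=\underline{y}+D(\sigma)-\sigma(e)=\underline{y}-e=\underline{z},
\]
so each $z_i$ is $G$-invariant. Since $\underline{x}\mapsto\underline{z}$ is an $L$-affine change of coordinates whose linear part $Q^{-1}$ is invertible, the $z_i$ are algebraically independent over $L$ and $L(z_1,\ldots,z_n)=L(x_1,\ldots,x_n)$, which is exactly what the theorem asserts.

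The only substantive input is Speiser's generalization of Hilbert~90 to $GL_n$; without the faithfulness hypothesis (ii), $L/L^G$ need not be Galois with group $G$ and the cocycle $A$ need not split, so the whole scheme collapses. The rest is bookkeeping, but the place I expect to slow down is keeping the cocycle conventions straight: because $G$ acts simultaneously on the coefficient matrices and on the variables, the $1$-cocycle identity for $A$ picks up the reversed form $c_{\sigma\tau}=\sigma(c_\tau)c_\sigma$, and one must track this carefully to see that the corrected cocycle $D$ lands in the plain additive group $L^n$ rather than in some twisted module.
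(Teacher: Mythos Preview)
Your argument is correct: the cocycle identities you derive are right, Speiser's theorem (noncommutative Hilbert~90) trivializes the linear part exactly as you describe, and the additive Hilbert~90 then removes the translation. Your care with the ``reversed'' cocycle convention $A(\sigma\tau)=\sigma(A(\tau))A(\sigma)$ is well placed; one checks that $\sigma\mapsto A(\sigma)^{-1}$ is a cocycle in the standard convention, so Speiser applies and yields $A(\sigma)=\sigma(Q)Q^{-1}$ as you claim.

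As for comparison with the paper: there is nothing to compare. Theorem~\ref{t3.6} is stated in the preliminaries section as a quotation of \cite[Theorem~1]{HK} and is not proved in the present paper at all. Your proof is in fact the standard Galois-descent argument one would expect for this statement, and is essentially how the original source handles it.
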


%----------------------t3.8
\begin{theorem}[{Kang and Plans \cite[Theorem 1.1]{KP}}] \label{t3.8}
Let $k$ be a field with $\fn{char}k=p>0$ and $\widetilde{G}$ be a
group defined by $1\to \bm{Z}/p\bm{Z} \to \widetilde{G}\to G \to
1$ where $G$ is a finite group. Then $k(\widetilde{G})$ is
rational over $k(G)$. In particular, if $G$ is a $p$-group and $k$
is a field with $\fn{char} k=p>0$, then $k(G)$ is $k$-rational
$($Kuniyoshi's Theorem$)$.
\end{theorem}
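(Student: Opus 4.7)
The plan is to exploit the Jordan-block structure of the regular $kN$-module in characteristic $p$, then peel off variables by iterated Artin--Schreier descent (Theorem \ref{t3.5}) and finish with Theorem \ref{t3.6}. Fix a generator $\sigma$ of $N \simeq \bm{Z}/p\bm{Z}$ and coset representatives $g_1, \ldots, g_m$ of $N$ in $\widetilde{G}$ (where $m = |G|$), and define $e_{i,l} = (\sigma - 1)^l x_{g_i}$ for $1 \le i \le m$ and $0 \le l \le p-1$. Because $(\sigma - 1)^p = 0$ in $k[\sigma]/(\sigma^p - 1)$ when $\fn{char} k = p$, these form a Jordan basis of the regular $kN$-module: $\sigma e_{i,l} = e_{i,l} + e_{i,l+1}$ for $l < p-1$, and the top vector $u_i := e_{i,p-1} = (\sigma - 1)^{p-1} x_{g_i} = \sum_{n \in N} x_{n g_i}$ is $N$-fixed. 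Writing $h g_i = \sigma^{a_{h,i}} g_{\pi(h)(i)}$, a short computation using the normality of $N$ yields $h(u_i) = u_{\pi(h)(i)}$, so the $\widetilde{G}$-action on $L := k(u_1, \ldots, u_m)$ factors through $G = \widetilde{G}/N$ via the regular representation, whence $L^{\widetilde{G}} = L^G \simeq k(G)$.

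Next, I would normalize by setting $w_{i,l} := e_{i,l}/u_i$ for $0 \le l \le p-2$ and $w_{i,p-1} := 1$. Then $\tilde{E} := k(x_h : h \in \widetilde{G}) = L(w_{i,l})$ with $\{u_i, w_{i,l}\}$ algebraically independent over $k$, and $\sigma w_{i,l} = w_{i,l} + w_{i,l+1}$; in particular $\sigma w_{i,p-2} = w_{i,p-2} + 1$ is the standard Artin--Schreier shift. Expanding $\sigma^a = \sum_b \binom{a}{b}(\sigma-1)^b$ gives the explicit transformation rule
\[
h(w_{i,l}) = \sum_{b=0}^{p-1-l} \binom{a_{h,i}}{b}\, w_{\pi(h)(i),\, l+b}, \quad \text{with } w_{\pi(h)(i), p-1} := 1,
\]
so the $\widetilde{G}$-action on the $(p-1)m$ variables $(w_{i,l})$ is $\bm{F}_p$-affine and upper triangular with respect to the filtration by $l$. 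I would then compute $\tilde{E}^N$ by downward induction on $l$, applying Theorem \ref{t3.5} $m$ times at each level (adjoining $\{w_{i,l}\}$ one $i$ at a time over the field of higher-level invariants together with $L$). The top-level generators are $z_{i,p-2} = w_{i,p-2} - w_{1,p-2}$ for $i \ge 2$ and $z_{1,p-2} = w_{1,p-2}^p - w_{1,p-2}$, with analogous ``difference plus Artin--Schreier'' generators $z_{i,l}$ at each lower level, giving $\tilde{E}^N = L(z_{i,l} : 1 \le i \le m,\ 0 \le l \le p-2)$.

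For the final descent, observe that the $\bm{F}_p$-affine structure of the $\widetilde{G}$-action on the $w_{i,l}$, together with the identity $a^p = a$ for $a \in \bm{F}_p$ (which makes the Artin--Schreier brackets commute with the $G$-action modulo already-constructed generators), forces the induced $G$-action on the $z_{i,l}$ to be $L$-affine in upper-triangular form. Since $G$ acts faithfully on $L$ through the regular representation, Theorem \ref{t3.6} produces $G$-invariant generators $y_{i,l}$ with $L(z_{i,l}) = L(y_{i,l})$. Taking $G$-fixed fields,
\[
k(\widetilde{G}) = \tilde{E}^{\widetilde{G}} = (\tilde{E}^N)^G = L^G(y_{i,l}) = k(G)(y_{i,l}),
\]
which is rational of transcendence degree $(p-1)|G|$ over $k(G)$. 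The ``in particular'' statement (Kuniyoshi's theorem) then follows by induction on the composition length of a $p$-group, using the existence of a central $\bm{Z}/p\bm{Z}$-subgroup at each stage.

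The principal technical obstacle is verifying, level by level, that the $G$-action on the inductively constructed $z_{i,l}$ retains the affine form needed for Theorem \ref{t3.6}. The top level is transparent ($z_{i,p-2}$'s either transform among themselves or differ by $L$-constants), but at lower levels the Artin--Schreier corrections contributed from higher levels produce polynomial cross-terms, and one must exploit the $\bm{F}_p$-coefficient structure of the $\widetilde{G}$-action to see that these cross-terms all live in the subfield $L(z_{j,l'} : l' > l)$, so that the $G$-action remains affine over that subfield. This bookkeeping, rather than any single deep step, is the heart of the argument.
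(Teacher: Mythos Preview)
The paper does not supply its own proof of Theorem \ref{t3.8}; it is listed in Section 3 among the preliminary results and is quoted verbatim from \cite[Theorem 1.1]{KP}, so there is no in-paper argument to compare your attempt against.

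That said, your outline is the standard char-$p$ argument (Jordan filtration of the regular $kN$-module, iterated Artin--Schreier descent, then Theorem \ref{t3.6}) and is essentially what one finds in \cite{KP} and in the classical treatments of Kuniyoshi's theorem. One point to tighten: your displayed formula for $h(w_{i,l})$ tacitly assumes the extension is \emph{central}, i.e.\ that $h$ commutes with $\sigma-1$. For a general normal (non-central) $N=\bm{Z}/p\bm{Z}$ one has $h\sigma h^{-1}=\sigma^{c(h)}$ with $c(h)\in(\bm{Z}/p\bm{Z})^{\times}$, and then $h\cdot(\sigma-1)^{l}=(\sigma^{c(h)}-1)^{l}\cdot h=c(h)^{l}(\sigma-1)^{l}\cdot h+(\text{higher order in }(\sigma-1))\cdot h$. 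The leading coefficient $c(h)^{l}$ (and its higher-order companions) must be carried through; in particular the action on the $w_{i,l}$ is still $\bm{F}_{p}$-affine and upper triangular in $l$, so your strategy survives, but the explicit transformation rule you wrote is correct only when $c(h)\equiv 1$. Your computation $h(u_i)=u_{\pi(h)(i)}$ remains valid because $c(h)^{p-1}=1$ by Fermat. With this correction the rest of your sketch (including the honest caveat about verifying affineness of the $z_{i,l}$ level by level) is on target.
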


%---------------------------t3.7
\begin{theorem}[{Fischer \cite[Theorem 6.1]{Sw}}] \label{t3.7}
Let $G$ be a finite abelian group of exponent $e$, and let $k$ be
a field containing a primitive $e$-th root of unity. Then $k(G)$
is rational over $k$.
\end{theorem}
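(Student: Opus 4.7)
The plan is to reduce Fischer's theorem to the cyclic case by invoking the direct product theorem for rationality (Theorem \ref{t3.2}(2)), and then to handle the cyclic case by diagonalizing the regular representation via the roots of unity available in $k$. By the structure theorem for finite abelian groups, $G \cong C_{n_1}\times\cdots\times C_{n_r}$ with each $n_i$ dividing the exponent $e$; since $\zeta_e \in k$, we have $\zeta_{n_i}\in k$ for every $i$. Once the cyclic case is settled, an $r$-fold application of Theorem \ref{t3.2}(2) delivers the $k$-rationality of $k(G)$.

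For the cyclic case, let $G=C_n=\langle\sigma\rangle$ with $\zeta_n\in k$, and index the basis so that $\sigma(x_i)=x_{i+1\bmod n}$ for $0\le i\le n-1$. I would perform a discrete Fourier transform by setting $y_j=\sum_{i=0}^{n-1}\zeta_n^{-ij}x_i$ for $0\le j\le n-1$; this is an invertible $k$-linear change of variables, since its coefficient matrix is Vandermonde in the $n$ distinct roots $\zeta_n^{-j}$. Consequently $k(x_0,\ldots,x_{n-1})=k(y_0,\ldots,y_{n-1})$, and a direct computation yields $\sigma(y_j)=\zeta_n^j\,y_j$, simultaneously diagonalizing the action.

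It remains to exhibit invariant generators. I would set $z_0=y_0$, $z_1=y_1^n$, and $z_j=y_j\,y_1^{-j}$ for $2\le j\le n-1$; each $z_j$ is manifestly $\sigma$-fixed. Adjoining $y_1$ to $k(z_0,\ldots,z_{n-1})$ recovers every $y_j$, so $k(y_0,\ldots,y_{n-1})=k(z_0,\ldots,z_{n-1})(y_1)$ is an extension of degree at most $n$, because $y_1^n=z_1$. Since $\sigma(y_1)=\zeta_n y_1$ with $\zeta_n$ of exact order $n$, the $C_n$-action on $k(y_0,\ldots,y_{n-1})$ is faithful, hence $[k(y_0,\ldots,y_{n-1}):k(y_0,\ldots,y_{n-1})^{C_n}]=n$. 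The two degrees coincide, forcing $k(z_0,\ldots,z_{n-1})=k(y_0,\ldots,y_{n-1})^{C_n}$; the $z_j$ are then algebraically independent by a transcendence-degree count, and $k(C_n)=k(z_0,\ldots,z_{n-1})$ is $k$-rational. The only step meriting real attention is this final degree comparison; the rest is a mechanical diagonalization, so I anticipate no essential obstacle.
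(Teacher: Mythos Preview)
Your argument is correct, but there is nothing to compare it against: the paper does not supply its own proof of Theorem~\ref{t3.7}. Fischer's theorem is quoted there as a classical fact, with a citation to Swan's survey, and is used only as input to later arguments.

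That said, it is worth noting that your route is slightly roundabout relative to the standard proof. Rather than splitting $G$ into cyclic factors and then invoking the Kang--Plans product theorem (Theorem~\ref{t3.2}(2)), one usually diagonalizes the regular representation of the whole abelian group in one stroke: since $\zeta_e\in k$, every irreducible $k$-representation of $G$ is one-dimensional, so there is a basis $y_1,\ldots,y_{|G|}$ of $\bigoplus_{g\in G} k\cdot x_g$ on which $G$ acts by characters, and one then extracts a transcendence basis of monomial invariants exactly as you did in the cyclic case. This avoids appealing to Theorem~\ref{t3.2}(2), which is a considerably more recent (and deeper) result than Fischer's theorem itself; your reduction is logically sound within the paper's framework, but it inverts the historical and conceptual dependence. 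Your handling of the cyclic case---the Fourier change of variables, the explicit invariants $z_j$, and the degree comparison to identify the fixed field---is clean and correct.
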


%-----------------------t3.9
\begin{theorem}[Hajja \cite{Ha}] \label{t3.9}
Let $G$ be a finite group acting on the rational function field $k(x,y)$ by monomial $k$-automorphisms,
i.e.\ for any $\sigma\in G$, $\sigma(x)=\alpha\cdot x^a y^b$, $\sigma(y)=\beta\cdot x^c y^d$ where $\alpha,\beta \in k\backslash \{0\}$,
$a,b,c,d\in \bm{Z}$ and $\alpha$, $\beta$, $a$, $b$, $c$, $d$ are parameters depending on $\sigma$.
Then $k(x,y)^G$ is $k$-rational.
\end{theorem}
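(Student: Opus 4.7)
My plan is a two-step reduction via the integer-exponent homomorphism $\rho \colon G \to GL_2(\bm{Z})$, combined with repeated use of Theorem \ref{t3.5}. For $\sigma \in G$ with $\sigma(x) = \alpha x^a y^b$, $\sigma(y) = \beta x^c y^d$, let $\rho(\sigma) = \left(\begin{smallmatrix} a & b \\ c & d \end{smallmatrix}\right) \in GL_2(\bm{Z})$ and put $N = \ker \rho$. Elements of $N$ act diagonally, and the assignment $\sigma \mapsto (\alpha_\sigma, \beta_\sigma)$ embeds $N$ into $(k^\times)^2$, so $N$ is a finite abelian group acting through characters. Let $M \subset \bm{Z}^2$ be the sublattice of $N$-invariant weights --- those $(p, q)$ with $\alpha_\sigma^p \beta_\sigma^q = 1$ for every $\sigma \in N$; it has rank $2$ and index $|N|$. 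For any $\bm{Z}$-basis $(p_1, q_1), (p_2, q_2)$ of $M$, the monomials $u_i = x^{p_i} y^{q_i}$ satisfy $k(x, y)^N = k(u_1, u_2)$ by the degree count $[k(x, y) : k(u_1, u_2)] = |p_1 q_2 - p_2 q_1| = |N|$. Because $N \triangleleft G$, the lattice $M$ is $G$-stable, so $G/N$ still acts on $k(u_1, u_2)$ monomially (with new constants in $k^\times$). Replacing $(G, x, y)$ by $(G/N, u_1, u_2)$, we reduce to $N = 1$, i.e., $G \hookrightarrow GL_2(\bm{Z})$.

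Now invoke the classification: up to conjugacy, the finite subgroups of $GL_2(\bm{Z})$ are cyclic of order $1, 2, 3, 4, 6$ or dihedral of order $4, 6, 8, 12$. If $G$ stabilizes a primitive vector $(p, q) \in \bm{Z}^2$ --- equivalently, each element of $G$ has order $\leq 2$, so $G$ is $\{1\}$, $C_2$, or $C_2 \times C_2$ --- set $w = x^p y^q$, extend $(p, q)$ to a $\bm{Z}$-basis $(p, q), (p', q')$ of $\bm{Z}^2$, and let $w' = x^{p'} y^{q'}$, so that $k(x, y) = k(w, w')$. Each $\sigma \in G$ then sends $w$ to $\gamma_\sigma w^{\pm 1}$ and $w'$ to $\gamma'_\sigma w^{i_\sigma} (w')^{\pm 1}$. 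The subfield $k(w)^G$ is rational (one of $k(w^2)$, $k(w)$, or $k(w + \gamma/w)$), and over it Theorem \ref{t3.5} handles the cosets acting affinely on $w'$, while the cosets acting by $w' \mapsto (\cdot)/w'$ are dealt with elementarily through the invariant $w' + (\cdot)/w'$. In the remaining cases --- $G \cong C_3, C_4, C_6, D_3, D_4, D_6$ --- the generator of the cyclic normal subgroup of order $3, 4$, or $6$ acts irreducibly on $\bm{Q}^2$, only finitely many matrices arise, and for each conjugacy class one writes down explicit Noether-style invariants (orbit sums and products of $x, y$ twisted by the $k^\times$-constants) and verifies $k$-rationality by a transcendence-degree calculation.

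The main obstacle lies in the irreducible sub-cases, especially $G \cong C_6$ and $D_6$: because no $G$-stable rank-$1$ sublattice of $\bm{Z}^2$ exists, Theorem \ref{t3.5} cannot be invoked directly, and the rationality must be read off from the global orbit structure of $G$ on monomials in $x, y$. Fortunately each such $G$ contributes only a finite list of explicit cases. A further, largely bookkeeping, complication is tracking the twist constants $\alpha_\sigma, \beta_\sigma \in k^\times$ throughout the successive changes of variable; these cannot in general be eliminated by rescaling $x, y$ over $k$, so the invariants must be constructed keeping the twists explicit, which is straightforward but tedious.
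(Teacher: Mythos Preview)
The paper does not prove Theorem \ref{t3.9}; it is quoted without proof in Section 3 (Preliminaries) as a known result of Hajja \cite{Ha}. So there is no in-paper argument to compare against. Your two-step strategy --- reduce via the exponent homomorphism $\rho\colon G\to GL_2(\bm{Z})$ to a faithful action, then invoke the classification of finite subgroups of $GL_2(\bm{Z})$ and treat each conjugacy class separately --- is exactly the route taken in \cite{Ha}, so at the level of outline your proposal matches the original source.

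That said, what you have written is a plan, not a proof, and you correctly identify where the real content lies. Two points deserve tightening. First, in the reducible case your appeal to Theorem \ref{t3.5} does not apply directly: that theorem requires each $\sigma$ to act on the distinguished variable by an affine substitution $w'\mapsto a_\sigma w'+b_\sigma$, whereas your action is $w'\mapsto \gamma'_\sigma w^{i_\sigma}(w')^{\pm 1}$, which is not affine when the sign is $-1$. You need to pass to the invariant $w'+(\cdot)/w'$ first (for a suitable normal subgroup) and then check that the residual action is genuinely affine before invoking Theorem \ref{t3.5}. Second, in the irreducible cases ($C_3$, $C_4$, $C_6$, $D_3$, $D_4$, $D_6$) the phrase ``verifies $k$-rationality by a transcendence-degree calculation'' is not enough: exhibiting two algebraically independent invariants does not show they generate the fixed field. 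One must produce explicit generators and check the degree, and with the twist constants $\alpha_\sigma,\beta_\sigma\in k^\times$ present this is precisely the case-by-case work carried out in \cite{Ha}; it is finite but not automatic, particularly for $C_6$ and $D_6$.
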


\begin{theorem}[{Kang \cite[Corollary 3.2]{Ka2}}] \label{t3.4}
Let $k$ be a field and $G$ be a finite group. Assume that {\rm
(i)} $G$ contains an abelian normal subgroup $H$ so that $G/H$ is
cyclic of order $n$, {\rm (ii)} $\bm{Z}[\zeta_n]$ is a unique
factorization domain, and {\rm (iii)} $\zeta_{e'}\in k$ where
$e'=\fn{lcm}\{\fn{ord}(\tau),\exp(H)\}$ and $\tau$ is an element
of $G$ whose image generates $G/H$. Then $k(G)$ is $k$-rational.
\end{theorem}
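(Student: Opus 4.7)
The plan is to use the hypothesis $\zeta_{e'}\in k$ to diagonalize the $H$-action on the regular representation, and then invoke the $\bm{Z}[\zeta_n]$-UFD hypothesis (equivalently PID, since $\bm{Z}[\zeta_n]$ is Dedekind) to rationalize the residual cyclic $G/H\simeq C_n$-action. Let $\tau\in G$ project to a generator of $G/H$, with $h_0:=\tau^n\in H$, and set $V=\bigoplus_{g\in G}k\cdot x_g$, so that $k(G)=k(V)^G$. Because $H$ is abelian and $\zeta_{\exp H}\in k$ (note $\exp H\mid e'$), the $k[H]$-module $V$ splits as $V=\bigoplus_{\chi\in\widehat H}V_\chi$, each isotypic component of dimension $n=[G:H]$; applying the central idempotents $e_\chi=\frac{1}{|H|}\sum_h\chi(h^{-1})h$ (which makes sense since $\fn{char}k\nmid e'$ forces $|H|$ invertible in $k$) to the coset-representative vectors $x_{\tau^{i-1}}$ yields explicit bases $\{w_{\chi,i}\}_{1\le i\le n}$ of $V_\chi$ with $h\cdot w_{\chi,i}=\chi(h)\,w_{\chi,i}$.

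Since $\tau$ normalizes $H$, it sends $V_\chi$ to $V_{\chi^\tau}$ where $\chi^\tau(h):=\chi(\tau^{-1}h\tau)$; a direct computation gives $\tau\cdot w_{\chi,i}=w_{\chi^\tau,i+1}$ for $i<n$ and $\tau\cdot w_{\chi,n}=\chi^\tau(h_0)\,w_{\chi^\tau,1}$, so the full $G$-action on $\{w_{\chi,i}\}$ is monomial (a permutation twisted by scalars in $k^\times$). I would then compute the fixed field in two stages. First, $k(V)^H$ is generated by the $H$-invariant Laurent monomials in $\{w_{\chi,i}\}$, i.e.\ by monomials whose exponent vector lies in the sublattice $\ker\phi$ of $M=\bigoplus_{\chi,i}\bm{Z}\cdot e_{\chi,i}$ (where $\phi\colon M\to\widehat H$ sends $e_{\chi,i}$ to $\chi$); a standard Fischer-type argument then shows $k(V)^H$ is $k$-rational.

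Second, on $k(V)^H$ the cyclic group $\langle\bar\tau\rangle$ of order $n$ acts as a $\bm{Z}$-linear permutation of the lattice $\ker\phi$ (via $e_{\chi,i}\mapsto e_{\chi^\tau,i+1}$) twisted by a $k^\times$-valued cocycle (the scalars $\chi^\tau(h_0)$). The lattice $\ker\phi$ decomposes into $\langle\tau\rangle$-orbit blocks indexed by the $\tau$-orbits $\mathcal O\subset\widehat H$; modulo a permutation summand, each block carries a module structure over $\bm{Z}[\zeta_{n_\mathcal O}]$ where $n_\mathcal O=|\mathcal O|$ divides $n$. The UFD hypothesis on $\bm{Z}[\zeta_n]$ forces each $\bm{Z}[\zeta_{n_\mathcal O}]$ to be a PID (via divisibility of cyclotomic class numbers), so every such module is free, and the lattice action is, up to stable equivalence, a permutation action. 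The scalar cocycles $\chi^\tau(h_0)$ can then be absorbed into a multiplicative rescaling because $\zeta_{e'}\in k$ supplies all the needed roots of unity. At that point $k(G)=(k(V)^H)^{\langle\bar\tau\rangle}$ is $k$-rational by applying the Hajja--Kang linearization theorem (Theorem \ref{t3.6}) together with Fischer's theorem (Theorem \ref{t3.7}) to the resulting permutation $C_n$-action on the freed rational generators.

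The principal difficulty lies in this second stage: simultaneously realizing each orbit block as a free cyclotomic module and absorbing the scalar cocycle into a basis change. Both hypotheses --- the UFD assumption on $\bm{Z}[\zeta_n]$ and the full strength $\zeta_{e'}\in k$ (not merely $\zeta_n,\zeta_{\exp H}\in k$ individually) --- are used in an essential way; their failure is precisely what leaves the rationality question open in Example \ref{ex1.12}.
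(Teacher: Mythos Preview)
The paper does not prove Theorem \ref{t3.4}; it is quoted in Section 3 (Preliminaries) from \cite[Corollary 3.2]{Ka2} and used as a black box (in Example \ref{ex1.12} and Lemma \ref{l4.4}). So there is no in-paper argument to compare your sketch against.

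Your outline is the right strategy and is in the spirit of \cite{Ka2}: diagonalize the $H$-action on the regular representation using $\zeta_{\exp H}\in k$, obtain a monomial $\langle\bar\tau\rangle$-action on a lattice, and use the arithmetic hypothesis on $\bm{Z}[\zeta_n]$ to show the relevant lattice is (stably) permutation so that the scalar cocycle can be absorbed. One step, however, is not justified as written. You assert that the UFD hypothesis on $\bm{Z}[\zeta_n]$ forces each $\bm{Z}[\zeta_{n_{\mathcal O}}]$ with $n_{\mathcal O}\mid n$ to be a PID ``via divisibility of cyclotomic class numbers''. There is no clean general theorem $h_d\mid h_n$ for $d\mid n$: the minus parts do satisfy $h_d^{-}\mid h_n^{-}$, but the plus parts require a separate argument. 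The conclusion you want is nevertheless true, because the $n$ with $h(\bm{Q}(\zeta_n))=1$ form the explicit finite Masley--Montgomery list and one checks this list is closed under divisors; but you should either cite that finite verification or supply the class-field-theoretic argument, rather than invoke a divisibility that is not standard. Alternatively, reorganize the lattice analysis (as \cite{Ka2} effectively does) so that the modules you need to control are genuinely $\bm{Z}[\zeta_n]$-modules and the issue of proper divisors does not arise. A minor point: the concluding appeal to Theorems \ref{t3.6} and \ref{t3.7} is a bit loose --- once the action is monomial with permutation underlying lattice, rationality follows, but neither of those two statements is literally what you apply; you may want to state the needed lemma on monomial permutation actions separately.
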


%---------------------------------------------S4
\section{Main results}

%-------------------t4.1
\begin{lemma} \label{t4.1}
Let $G$ be a group isomorphic to the group $(I)$ in Theorem
\ref{t2.7}. Suppose that $exp(G)=2^r s$ with $2\nmid s$. If $k$ is
an infinite field such that either $\fn{char}k=2$ or
$k(\zeta_{2^r})$ is a cyclic extension of $k$, then $k(G)$ is
retract $k$-rational.
\end{lemma}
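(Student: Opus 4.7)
The plan is to reduce to the two abelian factors $C_m$ and $C_n$ and then invoke Saltman's criterion (Theorem \ref{t3.1}).

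First I would recall that the group in question is
$G = \langle \sigma,\tau : \sigma^m = \tau^n = 1,\ \tau\sigma\tau^{-1} = \sigma^r\rangle$
with $r^n\equiv 1\pmod m$ and $\gcd\{m,n(r-1)\}=1$. The defining presentation exhibits $G$ as a split extension $G = \langle\sigma\rangle \rtimes \langle\tau\rangle \simeq C_m \rtimes C_n$, with normal abelian kernel $N=C_m$ and complement $G_0=C_n$. From $\gcd\{m,n(r-1)\}=1$ one extracts the divisibility $\gcd\{m,n\}=1$, which is precisely the coprimality hypothesis of Theorem \ref{t1.11}(2). Hence to conclude that $k(G)$ is retract $k$-rational, it suffices to prove that both $k(C_m)$ and $k(C_n)$ are retract $k$-rational.

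Next I would apply Theorem \ref{t3.1} to each factor. Write $m = 2^{r_1} s_1$ and $n = 2^{r_2} s_2$ with $s_1, s_2$ odd. Since $\sigma$ has order $m$ in $G$ and $\tau$ has order $n$ in $G$, both $m$ and $n$ divide $\exp(G) = 2^r s$; in particular $r_1 \le r$ and $r_2 \le r$. Therefore $k(\zeta_{2^{r_i}}) \subseteq k(\zeta_{2^r})$ for $i=1,2$. In characteristic $2$ the hypothesis of Theorem \ref{t3.1} is automatic; otherwise $k(\zeta_{2^r})/k$ is cyclic by assumption, and any intermediate field of a cyclic extension is again a cyclic extension of $k$. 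Either way, Theorem \ref{t3.1} yields the retract $k$-rationality of both $k(C_m)$ and $k(C_n)$.

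Combining these two ingredients via Theorem \ref{t1.11}(2) gives the retract $k$-rationality of $k(G)$, completing the proof. There is essentially no hard step here: this lemma is a bookkeeping exercise that packages Saltman's semidirect product theorem together with his criterion for abelian groups, and the only thing worth verifying carefully is the inheritance of cyclicity by subextensions and the extraction of $\gcd\{m,n\}=1$ from the given coprimality condition on the presentation.
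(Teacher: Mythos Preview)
Your proof is correct and follows exactly the same approach as the paper: decompose $G\simeq C_m\rtimes C_n$ with $\gcd\{m,n\}=1$, apply Theorem \ref{t3.1} to each cyclic factor, and conclude via Theorem \ref{t1.11}(2). You have simply filled in the details (the subextension argument for cyclicity and the extraction of $\gcd\{m,n\}=1$) that the paper leaves implicit.
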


\begin{proof} Note that $G= G_1 \rtimes G_2$ where $G_1\simeq C_m$, $G_2\simeq C_n$ with $\gcd\{m,n\}=1$.
By Theorem \ref{t3.1}, both $k(G_1)$ and $k(G_2)$ are retract
$k$-rational. Apply Theorem \ref{t1.11}. Done.
\end{proof}

%----------------------c4.2
\begin{theorem} \label{c4.2}
Let $G=N\rtimes G_0$ be a Frobenius group with kernel $N$ and
complement $G_0$. Let $exp(G)=2^r s$ where $2\nmid s$. Assume that
{\rm (i)} $G_0$ is a $Z$-group, and {\rm (ii)} $N$ is an abelian
group. If $k$ is an infinite field such that either $\fn{char}k=2$
or $k(\zeta_{2^r})$ is a cyclic extension of $k$, then $k(G)$ is
retract $k$-rational.
\end{theorem}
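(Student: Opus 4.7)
The plan is to apply Saltman's reduction (Theorem \ref{t1.11}(2)) to split the problem into retract rationality of $k(N)$ and of $k(G_0)$, then handle each factor with tools already at hand.

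First I would verify the hypotheses of Theorem \ref{t1.11}(2). By assumption $N$ is abelian and normal in $G = N \rtimes G_0$, and Theorem \ref{t2.1}(1) gives $\gcd\{|N|,|G_0|\} = 1$. Hence it is enough to prove that both $k(N)$ and $k(G_0)$ are retract $k$-rational.

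For $k(N)$: $N$ is a finite abelian group whose exponent divides $\exp(G) = 2^r s$. Write $\exp(N) = 2^{r_1} s_1$ with $2 \nmid s_1$, so $r_1 \le r$. Apply Theorem \ref{t3.1}: the desired retract $k$-rationality of $k(N)$ reduces to checking that $\fn{char} k = 2$ or that $k(\zeta_{2^{r_1}})$ is cyclic over $k$. Since $k(\zeta_{2^{r_1}}) \subseteq k(\zeta_{2^r})$ and any subfield of a cyclic extension is cyclic, this follows from our hypothesis on $k$.

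For $k(G_0)$: since $G_0$ is a $Z$-group, Theorem \ref{t3.3} tells us that $G_0$ is split meta-cyclic, which is precisely the form of the groups in family (I) of Theorem \ref{t2.7}. Write $\exp(G_0) = 2^{r_2} s_2$ with $2 \nmid s_2$, so $r_2 \le r$; the same subfield argument shows $k(\zeta_{2^{r_2}})$ is cyclic over $k$ (or $\fn{char} k = 2$). Lemma \ref{t4.1} then yields that $k(G_0)$ is retract $k$-rational. Combining the two via Theorem \ref{t1.11}(2) completes the proof.

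I do not anticipate a real obstacle here: the argument is essentially a bookkeeping reduction assembled from Theorems \ref{t1.11}, \ref{t3.1}, \ref{t3.3}, \ref{t2.1}, and Lemma \ref{t4.1}. The only mildly delicate point is making sure the cyclicity hypothesis on $k(\zeta_{2^r})/k$ descends to $k(\zeta_{2^{r_i}})/k$ for $i=1,2$; this is immediate from the Galois correspondence, so no calculation is required.
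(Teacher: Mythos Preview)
Your proposal is correct and follows essentially the same route as the paper: reduce via Theorem~\ref{t1.11}(2) using $\gcd\{|N|,|G_0|\}=1$ from Theorem~\ref{t2.1}, handle $k(N)$ with Theorem~\ref{t3.1}, and handle $k(G_0)$ by identifying it as a type~(I) group via Theorem~\ref{t3.3} and then applying Lemma~\ref{t4.1}. Your explicit check that the cyclicity condition descends from $k(\zeta_{2^r})$ to $k(\zeta_{2^{r_i}})$ is a detail the paper leaves implicit, but otherwise the arguments coincide.
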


\begin{proof}
By Theorem \ref{t3.3}, $G_0$ is a group isomorphic to the group
(I) in Theorem \ref{t2.7}. Moreover, $\gcd\{|N|,|G_0|\}=1$ by
Theorem \ref{t2.1}.

Apply Theorem \ref{t3.1} and Lemma \ref{t4.1}. Both $k(N)$ and
$k(G_0)$ are retract $k$-rational. Thus $k(G)$ is also retract
$k$-rational by Theorem \ref{t1.11}.
\end{proof}

%---------------------l4.4
\begin{lemma} \label{l4.4}
Let $G$ be a group isomorphic to the group $(II)$ in Theorem
\ref{t2.7} with the parameters $m$, $n$, $r$, ... defined there
such that $n=2^uv$ $($where $u\ge 2, 2\nmid v)$. If $k$ is an
infinite satisfying that either $\fn{char}k=2$ or $\zeta_{2^u}\in
k$ $($when $\fn{char}k\ne 2)$, then $k(G)$ is retract
$k$-rational.
\end{lemma}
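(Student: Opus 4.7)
The plan is to invoke Saltman's semidirect product criterion (Theorem \ref{t1.11}(2)) twice, first to peel off the cyclic normal subgroup $N:=\langle\sigma\rangle\cong C_m$, and then to peel off the Hall $2'$-part of the cyclic normal subgroup $\langle\tau\rangle$ of the complement $G_0:=\langle\tau,\lambda\rangle$. From $\gcd\{m,n(r-1)\}=1$ together with $u\ge 2$ (so $n$ is even) I would first read off that $m$ is odd and hence $\gcd\{|N|,|G_0|\}=\gcd\{m,2n\}=1$; this sets up the outer semidirect product $G=N\rtimes G_0$ for Theorem \ref{t1.11}(2). Retract $k$-rationality of $k(N)=k(C_m)$ is then immediate from Theorem \ref{t3.1}, since $m$ odd reduces that criterion to the trivial cyclic extension $k(\zeta_1)=k$.

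The real work lies in showing that $k(G_0)$ is retract $k$-rational. I would set $\tau_2:=\tau^{2^u}$ of order $v$ and $\tau_1:=\tau^v$ of order $2^u$, so that $\langle\tau\rangle=\langle\tau_1\rangle\times\langle\tau_2\rangle$. A brief conjugation check using $k\equiv -1\pmod{2^u}$ (so $k\cdot 2^u\equiv -2^u\pmod n$ and $vk\equiv -v\pmod n$) shows both that $\langle\tau_2\rangle$ is normal in $G_0$ and that $\lambda\tau_1\lambda^{-1}=\tau_1^{-1}$. Combined with $\lambda^2=\tau^{n/2}=\tau_1^{2^{u-1}}$, the subgroup $Q:=\langle\tau_1,\lambda\rangle$ has exactly the standard presentation of the generalized quaternion group $Q_{2^{u+1}}$. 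Since $|Q|=2^{u+1}$ and $|\langle\tau_2\rangle|=v$ are coprime and intersect trivially in $G_0$, I obtain the inner semidirect decomposition $G_0=\langle\tau_2\rangle\rtimes Q$.

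The remaining ingredients for Theorem \ref{t1.11}(2) applied to $G_0$ are that $k(\langle\tau_2\rangle)=k(C_v)$ (retract $k$-rational by Theorem \ref{t3.1}, $v$ being odd) and that $k(Q)=k(Q_{2^{u+1}})$ is retract $k$-rational. For the latter I would actually prove the stronger claim that $k(Q_{2^{u+1}})$ is $k$-rational. When $\fn{char}k=2$, this is Kuniyoshi's theorem (Theorem \ref{t3.8}) since $Q$ is a $2$-group. When $\fn{char}k\ne 2$ and $\zeta_{2^u}\in k$, I would apply Theorem \ref{t3.4} with $H=\langle\tau_1\rangle\cong C_{2^u}$ and cyclic quotient $Q/H\cong C_2$: the UFD condition holds since $\bm{Z}[\zeta_2]=\bm{Z}$, a lifting of the generator of $Q/H$ can be taken to have order $4$ (namely $\lambda$), so $e'=\fn{lcm}(4,2^u)=2^u$ for $u\ge 2$, which is exactly the hypothesis. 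Assembling the two applications of Theorem \ref{t1.11}(2) gives retract $k$-rationality of $k(G)$. The step I anticipate as most delicate is the identification of $Q$ as $Q_{2^{u+1}}$, which requires careful combined use of the congruences $k\equiv -1\pmod{2^u}$ and $k^2\equiv 1\pmod n$; this is also what explains why the comparatively weak hypothesis $\zeta_{2^u}\in k$ (rather than $\zeta_{\exp(G_0)}\in k$) is sufficient.
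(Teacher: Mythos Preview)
Your proof is correct and follows essentially the same strategy as the paper's: two applications of Theorem \ref{t1.11} (first peeling off $\langle\sigma\rangle$, then the odd-order part of $\langle\tau\rangle$), followed by Theorem \ref{t3.4} or Kuniyoshi's theorem for the residual $2$-group $\langle\tau^v,\lambda\rangle$, with only cosmetic differences (your $\tau_1,\tau_2$ are the paper's $\tau_2,\tau_1$, and you deduce that $m$ is odd from the arithmetic condition $\gcd\{m,n\}=1$ rather than from the $GZ$-property). One small slip: the parenthetical claim $k\cdot 2^u\equiv -2^u\pmod{n}$ does not follow from $k\equiv -1\pmod{2^u}$ alone and is in general false (e.g.\ $n=12$, $k=7$), but it is also unnecessary, since $\lambda\tau^{2^u}\lambda^{-1}=\tau^{2^uk}=(\tau^{2^u})^k\in\langle\tau^{2^u}\rangle$ already gives normality of $\langle\tau_2\rangle$.
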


\begin{proof}
Define $G_1=\langle \sigma \rangle \subset G$, $G_2=\langle
\tau,\lambda \rangle$. Then $G\simeq G_1 \rtimes G_2$ with
$G_1\lhd G$.

We claim that $m$ is an odd integer. Otherwise, the subgroup
$\langle \sigma^{m/2}, \lambda^2 \rangle$ is isomorphic to $C_2
\times C_2$. This is impossible because $G$ is a $GZ$-group.

Thus $\gcd\{|G_1|,|G_2|\}=1$ and $k(G_1)$ is retract $k$-rational
by Theorem \ref{t3.1}. If $k(G_2)$ is retract $k$-rational, then
$k(G)$ is retract $k$-rational by Theorem \ref{t1.11}.

Define $\tau_1=\tau^{2^u}$, $\tau_2=\tau^v$, $H_1=\langle \tau_1\rangle$, $H_2=\langle \tau_2,\lambda\rangle$.
Then $G_2\simeq H_1 \rtimes H_2$ and $H_2$ is a 2-group,
while $2\nmid |H_1|$.
By Theorem \ref{t1.11}, if $k(H_2)$ is retract $k$-rational,
then $k(G_2)$ is also retract $k$-rational.

Note that $H_2=\langle \tau_2,\lambda\rangle$ is a 2-group with $\tau_2^{2^u}=\lambda^4=1$,
$\lambda\tau_2\lambda^{-1}=\tau_2^k$ and $\lambda^2=\tau_2^{2^{u-1}}$.

If $\fn{char}k\ne 2$ and $\zeta_{2^u}\in k$, by Theorem \ref{t3.4},
$k(H_2)$ is $k$-rational.
Hence $k(H_2)$ is retract $k$-rational.

If $\fn{char}k=2$, $k(H_2)$ is $k$-rational by Kuniyoshi's Theorem (see Theorem \ref{t3.8}).
Hence the result.
\end{proof}

%------------------------t4.5
\begin{theorem} \label{t4.5}
Define two finite groups $G_1$ and $G_2$ by
\[
G_1=\langle \tau,\lambda,\rho: \tau^{3^l}=\lambda^4=1,~ \lambda^2=\rho^2=(\lambda\rho)^2,~ \tau\lambda\tau^{-1}=\rho,~ \tau\rho\tau^{-1}=\lambda\rho\rangle
\]
where $l\ge 1$, and
\[
G_2=\langle G_1,\nu: \lambda^2=\nu^2,~ \nu\lambda\nu^{-1}=\rho\lambda,~\nu\rho\nu^{-1}=\rho^{-1},~\nu\tau\nu^{-1}=\tau^k\rangle
\]
where $k+1\equiv 0 \pmod{3}$ and $k^2\equiv 1 \pmod{3^l}$.

Let $k$ be a field with $\zeta_e \in k$ where exp$(G_2)=e$. Then
both $k(G_1)$ and $k(G_2)$ are $k$-rational.
\end{theorem}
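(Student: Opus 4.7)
The plan is to construct faithful two-dimensional $k$-linear representations of $G_1$ and $G_2$, apply the no-name lemma (Theorem \ref{t3.6}) to reduce the invariant computation for the regular representation to a two-variable problem, and then compute the two-variable invariant fields explicitly using a direct calculation for the $Q_8$-invariants and a Hilbert 90 argument for the subsequent cyclic quotients.

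First I would verify that $\zeta_e \in k$ supplies enough roots of unity. A direct computation using the defining relations gives $(\nu\lambda)^2 = \nu\lambda\nu^{-1}\cdot\nu^2\cdot\lambda = \rho\lambda\cdot\lambda^2\cdot\lambda = \rho$, so $\nu\lambda$ has order $8$; together with $\tau$ of order $3^l$ and $\lambda\tau^3$ of order $4\cdot 3^{l-1}$ inside $G_1$, this shows $\exp(G_1) = 4\cdot 3^l$ and $\exp(G_2) = 8\cdot 3^l$. Hence $\zeta_4, \zeta_8, \zeta_{3^l}$ and $\sqrt{2}$ all lie in $k$. These roots of unity are exactly what is needed to construct a faithful two-dimensional representation $V = k^2$ of $G_1$ over $k$: take $\lambda = \operatorname{diag}(\zeta_4, -\zeta_4)$, $\rho = \bigl(\begin{smallmatrix} 0 & 1 \\ -1 & 0 \end{smallmatrix}\bigr)$, and $\tau = \zeta_{2\cdot 3^l}\cdot T_0$, where $T_0 \in GL_2(k)$ is an explicit matrix with $T_0^3 = -I$ realizing, by conjugation, the cyclic outer automorphism $\lambda\mapsto\rho\mapsto\lambda\rho$ of $Q_8$ (so $\tau^{3^l}=I$ since $\zeta_{2\cdot 3^l}^{3^l} = -1$). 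For $G_2$ one further sets $\nu = \alpha\bigl(\begin{smallmatrix} 1 & -1 \\ 1 & 1 \end{smallmatrix}\bigr)$ with $\alpha^2 = -1/2$ (available because $\sqrt{2},\zeta_4\in k$), and checks all the relations of $G_2$. Theorem \ref{t3.6}, applied to a $G_i$-equivariant complement of $V$ in the regular representation $k[G_i]$, then reduces the problem to showing $k(V)^{G_1}$ and $k(V)^{G_2}$ are $k$-rational.

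To compute $k(V)^{Q_8}$ I use coordinates $x, y$ on $V$: the $Q_8$-invariants $s = x^4 + y^4$, $r = x^2 y^2$, $c = xy(x^4 - y^4)$ satisfy $c^2 = r(s^2 - 4r^2)$, a conic over $k(r)$ with the obvious rational point $(s, c) = (2r, 0)$; parametrization yields $k(V)^{Q_8} = k(r, t')$ with $t' = (s - 2r)/c$. Tracking the induced $\tau$-action, one finds that in the coordinates $(p,t')$ with $p = t'^2 r$ the action takes the form $\tau(p) = (p-1)/p$ and $\tau(t') = t'(1-p)/(\zeta_{2\cdot 3^l}^2\,p)$, while $\tau^3$ fixes $p$ and scales $t'$ by a primitive $3^{l-1}$-th root of unity. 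Thus the fixed field of $\langle\tau^3\rangle \cong C_{3^{l-1}}$ is $k(p, s')$ with $s' = t'^{3^{l-1}}$, and the residual $C_3 = \langle\bar\tau\rangle$ acts as a cyclic Galois action on $k(p)$ (with $k(p)^{C_3}$ rational in one variable by L\"uroth) twisted by a multiplicative action on $s'$. Since the resulting cocycle in $k(p)^\times$ has trivial norm, Hilbert 90 produces an $f(p)\in k(p)^\times$ with $s'' = f(p)\cdot s'$ invariant; hence $k(V)^{G_1} = k(p)^{C_3}(s'')$ is $k$-rational in two variables. For $G_2$ the same strategy applies with one more $C_2$-quotient by the image of $\nu$, handled either by an analogous Hilbert 90 argument or by direct application of Theorem \ref{t3.5}.

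The main technical obstacle is the explicit Hilbert 90 construction of $f(p)$. Checking that the twist cocycle has trivial norm is a straightforward but somewhat tedious computation in $k(p)$, and then producing $f$ explicitly (rather than merely abstractly) is what certifies genuine $k$-rationality of $k(V)^{G_1}$ rather than only stable or retract rationality. In the $G_2$ step one must additionally control the interaction between the newly-added $\nu$-action and the already-rationalized coordinates, which may require a further change of variables before Theorem \ref{t3.5} or a second Hilbert 90 invocation applies.
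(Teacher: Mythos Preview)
There is a genuine gap in the $G_2$ part: for $l\ge 2$ no faithful two-dimensional representation of $G_2$ exists, so your construction cannot work as stated. In your matrices $\tau=\zeta_{2\cdot 3^l}T_0$ with $T_0^3=-I$, hence $\tau^3$ is a scalar matrix and therefore central in the image. But in $G_2$ one has $\nu\tau\nu^{-1}=\tau^{-1}$ (the hypotheses $3\mid k+1$ and $3^l\mid k^2-1$ force $3^l\mid k+1$), so $\nu\tau^3\nu^{-1}=\tau^{-3}\ne\tau^3$ whenever $l\ge 2$; thus $\tau^3$ is \emph{not} central in $G_2$. Equivalently, $Z(G_2)=\langle\lambda^2\rangle$ and $G_2/Z(G_2)$ has order $8\cdot 3^l$, which for $l\ge 2$ is not among the finite subgroups of $PGL_2(k)$. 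Concretely, equating $\nu\tau\nu^{-1}$ with $\tau^{-1}$ forces $\nu T_0\nu^{-1}=\zeta_{2\cdot 3^l}^{-2}T_0^{-1}$; taking determinants gives $\det(T_0)^2=\zeta_{2\cdot 3^l}^{-4}$, while $T_0^3=-I$ forces $\det(T_0)^3=1$, and these are incompatible for $l\ge 2$. Your claimed verification of ``all the relations of $G_2$'' must therefore fail at $\nu\tau\nu^{-1}=\tau^k$.

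The paper handles $G_2$ with a faithful \emph{four}-dimensional representation, the direct sum of a two-dimensional block carrying the $Q_8$-action (on which $\tau$ acts through its order-$3$ image) and a second two-dimensional block on which $\tau$ acts as $\operatorname{diag}(\zeta,\zeta^{-1})$ with $\zeta=\zeta_{3^l}$ and $\nu$ swaps the coordinates. After the no-name reduction (Theorem~\ref{t3.6}) and two applications of Theorem~\ref{t3.5} one is left with $k(y_1,y_2)^{G_2}$; taking $\langle\lambda,\rho\rangle$- and $\langle\tau^3\rangle$-invariants and then diagonalizing the residual M\"obius action of $\tau$ brings the remaining $\langle\tau,\nu\rangle$-action into purely monomial form, and Theorem~\ref{t3.9} finishes. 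For $G_1$ your approach is correct but heavier than needed: once the faithful two-dimensional representation is in hand, setting $x=x_1/x_2$ and applying Theorem~\ref{t3.5} reduces to $k(x)^{G_1}$, which is $k$-rational by L\"uroth with no explicit invariant computation or Hilbert~90 required.
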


Remark. Let $G_1$ and $G_2$ be the groups defined above. When
$l=1$ (recall $\tau^{3^l}=1$), it can be shown that $G_1 \simeq
\widetilde{A}_4 \simeq SL_2(\bm{F}_3)$ and $G_2 \simeq
\widehat{S}_4$ (see Definition \ref{d4.9}); the rationality
problem of $k(SL_2(\bm{F}_3))$ and $k(\widehat{S}_4)$ has been
solved in \cite{Ri} and \cite[Theorem 1.4]{KZ} respectively. When
$l \ge 2$ and $k$ is a field with $\fn{char}k=2$, we don't know
whether $k(G_1)$ and $k(G_2)$ are $k$-rational or not.

\begin{proof}
Note that $\langle \lambda,\rho \rangle \simeq \{\pm 1,\pm i,\pm
j,\pm k\}$ the quaternion group of order 8. Thus the $2$-Sylow
subgroups of $G_1$ and $G_2$ are quaternion group and the
generalized quaternion group of order $16$ respectively. It
follows that exp$(G_1)=4 \cdot 3^l$ and exp$(G_2)=8 \cdot 3^l$.

\medskip
\begin{Case}{1} The group $G_1$ and $\fn{char}k\ne 2,3$
with $\zeta_{3^l},\zeta_8\in k$.
\end{Case}

Write $\zeta=\zeta_{3^l}$. Define $\eta=\zeta_8$ with
$\eta^2=\sqrt{-1}$. Define a representation of $G_1$ by
\begin{align*}
\Phi:{} & G_1 \to GL_2(k) \\
& \lambda \mapsto \begin{pmatrix} \sqrt{-1} & 0 \\ 0 & -\sqrt{-1} \end{pmatrix}, \\
& \rho \mapsto \begin{pmatrix} 0 & -1 \\ 1 & 0 \end{pmatrix}, \\
& \tau \mapsto \frac{\zeta}{\sqrt{2}} \begin{pmatrix} -\eta & \eta \\ \eta^3 & \eta^3 \end{pmatrix}.
\end{align*}

$\Phi$ is a faithful irreducible representation of $G_1$. Let
$k\cdot x_1\oplus k\cdot x_2$ be its representation space. We can
embed $k\cdot x_1 \oplus k\cdot x_2$ into the regular
representation space $\oplus_{g\in G_1} k\cdot x_g$. Thus the
$G_1$-field $k(x_1,x_2)$ can be embedded into the $G_1$-field
$k(x_g:g\in G_1)$. Applying Theorem \ref{t3.6}, we get
$k(G_1)=k(x_g:g\in G_1)^{G_1}=k(x_1,x_2)^{G_1}(u_1,\ldots,u_t)$
where $t=|G_1|-2=8\cdot 3^l-2$ and $u_1,\ldots,u_t$ are elements
fixed by $G_1$.

Define $x=x_1/x_2$. Then $k(x_1,x_2)=k(x,x_2)$ and, for any $g\in
G_1$, $g\cdot x\in k(x)$, $g(x_2)=\alpha_g x_2$ for some $\alpha_g
\in k(x)$. Applying Theorem \ref{t3.5}, we get
$k(x,x_2)^{G_1}=k(x)^{G_1}(f)$ for some element $f$ fixed by
$G_1$. By L\"uroth's Theorem, $k(x)^{G_1}$ is $k$-rational. Hence
$k(G_1)$ is $k$-rational.

\medskip
\begin{Case}{2}
The group $G_2$ and $\fn{char}k\ne 2,3$ with $\zeta_{3^l}, \zeta_8\in k$.
\end{Case}

Since $3^l\mid k^2-1$ and $3\mid k+1$, it follows that $3^l\mid k+1$,
i.e. $\nu\tau\nu^{-1}=\tau^{-1}$.

\medskip
Step 1. As before, write $\zeta=\zeta_{3^l}$. Define a
representation of $G_2$ by
\begin{align*}
\Psi:{} & G_2 \to GL_4(k) \\
& \lambda \mapsto \left(\begin{array}{@{}cc;{2pt/2pt}cc@{}} \sqrt{-1} & 0 & & \\ 0 & -\sqrt{-1} & & \\ \hdashline[2pt/2pt]
& & 1 & 0 \\ & & 0 & 1 \end{array}\right) \\
& \rho \mapsto \left(\begin{array}{@{}cc;{2pt/2pt}cc@{}} 0 & -1 & & \\ 1 & 0 & & \\ \hdashline[2pt/2pt]
& & 1 & 0 \\ & & 0 & 1 \end{array}\right) \\
& \tau \mapsto \left(\begin{array}{@{}cc;{2pt/2pt}cc@{}} \frac{-1-\sqrt{-1}}{2} & \frac{1+\sqrt{-1}}{2} & & \\[1mm]
\frac{-1+\sqrt{-1}}{2} & \frac{-1+\sqrt{-1}}{2} & & \\[1mm] \hdashline[2pt/2pt]
& & \zeta & 0 \\ & & 0 & \zeta^{-1} \end{array}\right) \\
& \nu \mapsto \left(\begin{array}{@{}cc;{2pt/2pt}cc@{}} \frac{1}{\sqrt{-2}} & \frac{1}{\sqrt{-2}} & & \\[1mm]
\frac{1}{\sqrt{-2}} & \frac{-1}{\sqrt{-2}} & & \\[1mm] \hdashline[2pt/2pt]
& & 0 & 1 \\ & & 1 & 0 \end{array}\right)
\end{align*}

It is not difficult to see that $\Psi$ is a faithful
representation of $G_2$. Let $\oplus_{1\le i\le 4} k\cdot x_i$ be
its representation space. Thus we may embed $\oplus_{1\le i\le 4}
k\cdot x_i$ into the regular representation space $\oplus_{g\in
G_2} k\cdot x_g$. By the same method as in Case 1, we embed the
$G_2$-field $k(x_1,x_2,x_3,x_4)$ into the $G_2$-field $k(x_g:g\in
G_2)$ and apply Theorem \ref{t3.6}. We find that $k(G_2)$ is
rational over $k(x_1,x_2,x_3,x_4)^{G_2}$.

\medskip
Step 2.
Define $y_1=x_1/x_2$, $y_2=x_3/x_4$.
Then $k(x_1,x_2,x_3,x_4)=k(y_1,y_2,x_2,x_4)$ and,
for all $g\in G_2$, $g(y_1),g(y_2)\in k(y_1,y_2)$, $g(x_2)=\alpha_gx_2$, $g(x_4)=\beta_g x_4$ for some $\alpha_g,\beta_g\in k(y_1,y_2)$.
Applying Theorem \ref{t3.5} twice,
we find that $k(y_1,y_2,x_2,x_4)^{G_2}=k(y_1,y_2)^{G_2}(v_1,v_2)$ for some elements $v_1,v_2$ fixed by $G_2$.

Note that $\tau(x_1)= [(-1-\sqrt{-1})x_1/2]+
[(-1+\sqrt{-1})x_2/2]$, $\tau(x_2)= [(1+\sqrt{-1})x_1/2]+
[(-1+\sqrt{-1})x_2/2]$, etc. Thus the action of $G_2$ on
$k(y_1,y_2)$ is given by
\begin{align*}
\lambda &: y_1 \mapsto -y_1,~ y_2 \mapsto y_2, \\
\rho &: y_1\mapsto -1/y_1,~ y_2\mapsto y_2, \\
\tau &: y_1\mapsto (-y_1+\sqrt{-1})/(y_1+\sqrt{-1}),~ y_2\mapsto \zeta^2 y_2, \\
\nu &: y_1\mapsto (y_1+1)/(y_1-1),~ y_2\mapsto 1/y_2.
\end{align*}

Define $y_3=y_1^2+(1/y_1^2)$.
Then $k(y_1,y_2)^{\langle \lambda,\rho\rangle}=k(y_2,y_3)$ and
\[
\tau: y_3\mapsto (2y_3-12)/(y_3+2), \quad \nu: y_3\mapsto (2y_3+12)/(y_3-2).
\]

Note that $\tau^3(y_3)=y_3$. Define $y_4=y_2^{3^{l-1}}$. Then
$k(y_2,y_3)^{\langle \tau^3\rangle} =k(y_3,y_4)$.

The action of $\langle \tau,\nu\rangle$ on $k(y_3,y_4)$ is given by
\begin{align*}
\tau &: y_3\mapsto (2y_3-12)/(y_3+2),~ y_4\mapsto \omega^{\prime} y_4 \\
\nu &: y_3\mapsto (2y_3+12)/(y_3-2),~ y_4\mapsto 1/y_4
\end{align*}
where $\omega^{\prime}=\zeta^{2\cdot 3^{l-1}}$ and therefore
$\omega^{\prime}$ is a primitive 3rd root of unity.

\medskip
Step 3. Note that, for a $k$-automorphism $\sigma$ on $k(x)$ with
$\sigma(x)=(ax+b)/(cx+d)$ where $a,b,c,d\in k$ and $ad-bc\ne 0$,
if $y=(\alpha x+\beta)/(\gamma x+\delta)$ with
$\alpha,\beta,\gamma,\delta \in k$ and $\alpha\delta-\beta\gamma
\ne 0$, then $\sigma(y)=(Ay+B)/(Cy+D)$ where $A,B,C,D$ are defined
by
\[
\begin{pmatrix} \alpha & \beta \\ \gamma & \delta \end{pmatrix} \begin{pmatrix} a & b \\ c & d \end{pmatrix}
=\begin{pmatrix} A & B \\ C & D \end{pmatrix}.
\]

In particular, if
\[
\begin{pmatrix} \alpha & \beta \\ \gamma & \delta \end{pmatrix} \begin{pmatrix} a & b \\ c & d \end{pmatrix}
\begin{pmatrix} \alpha & \beta \\ \gamma & \delta \end{pmatrix}^{-1}
=\begin{pmatrix} tw & 0 \\ 0 & t \end{pmatrix}
\]
where $ad-bc \ne 0$, $\alpha\delta-\beta\gamma\ne 0$ and $\sigma(x)=(ax+b)/(cx+d)$,
then $\sigma(y)=wy$ if $y$ is defined as $y=(\alpha x+\beta)/(\gamma x+\delta)$.

\medskip
Step 4.
Return to the actions of $\tau$ and $\nu$ on $y_3$.
Since $\tau^3(y_3)=1$, it follows that the order of the matrix $\left(\begin{smallmatrix} 2 & -12 \\ 1 & 2 \end{smallmatrix}\right)\in PGL_2(k)$ is 3.
Regard this matrix as a $2\times 2$ matrix over $k$.
Its characteristic polynomial is $(X-2)^2+12=(X-2-2\sqrt{-3})(X-2+2\sqrt{-3})$.
Since $\zeta_3\in k$ (because $\zeta_{3^l} \in k$),
it follows the eigenvalues $2\pm \sqrt{-3} \in k$.
Hence this matrix can be diagonalized over $k$.
In other words, there is an invertible $2\times 2$ matrix $\left(\begin{smallmatrix} \alpha & \beta \\ \gamma & \delta \end{smallmatrix}\right) \in GL_2(k)$ such that
\[
\begin{pmatrix} \alpha & \beta \\ \gamma & \delta \end{pmatrix} \begin{pmatrix} 2 & -12 \\ 1 & 2 \end{pmatrix}
\begin{pmatrix} \alpha & \beta \\ \gamma & \delta \end{pmatrix}^{-1}
=\begin{pmatrix} a & 0 \\ 0 & b \end{pmatrix}.
\]

Since the order of $\left(\begin{smallmatrix} 2 & -12 \\ 1 & 2
\end{smallmatrix}\right) \in PGL_2(k)$ is 3, it is necessary that
$a/b=\omega$ is a primitive 3rd root of unity (where
$\omega^{\prime} =\omega$ or $\omega^2$).

Apply the result of Step 3. Define $y_5=(\alpha y_3+\beta)/(\gamma
y_3+\delta)$. Then $\tau(y_5)=\omega y_5$. Note that
$\nu(y_5)=(Ay_5+B)/(Cy_5+D)$ for some $A,B,C,D \in k$ with
$AD-BC\ne 0$.

Because $\nu\tau\nu^{-1}=\tau^{-1}$, we find $\tau\nu\tau=\nu$ and $\tau\nu\tau(y_5)=\nu(y_5)$.
Plugging in the formula $\nu(y_5)=(Ay_5+B)/(Cy_5+D)$,
we get the identity
\[
(\omega^2 Ay_5+\omega B)/(\omega Cy_5+D)=(Ay_5+B)/(Cy_5+D).
\]

The above identity implies that $A=D=0$.
In other words, $\nu(y_5)=d/y_5$ for some $d\in k\backslash \{0\}$.

In summary, we have $k(y_3,y_4)=k(y_4,y_5)$ and
\begin{align*}
\tau &: y_4\mapsto \omega^{\prime} y_4,~ y_5\mapsto \omega y_5, \\
\nu &: y_4\mapsto 1/y_4,~ y_5\mapsto d/y_5.
\end{align*}

By Theorem \ref{t3.9}, $k(y_4,y_5)^{\langle \tau,\nu\rangle}$ is $k$-rational.
Hence $k(G_2)$ is $k$-rational.
\end{proof}

%---------------------t4.6
\begin{theorem} \label{t4.6}
Let $G$ be a group isomorphic to the group $(III)$ in Theorem
\ref{t2.7} with the parameters $m$, $n$, $r$, ... defined there.
Write $n=3^l n'$ with $l\ge 1$ and $3\nmid n'$. If $k$ is an
infinite field satisfying that either $\fn{char}k=2$ or
$\zeta_{3^l},\zeta_8\in k$ $($when $\fn{char}k\ne 2,3)$, then
$k(G)$ is retract $k$-rational.
\end{theorem}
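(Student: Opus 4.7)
The plan is to peel off abelian pieces via Saltman's Theorem~\ref{t1.11}(2) until landing on the group $G_1\cong Q_8\rtimes C_{3^l}$ of Theorem~\ref{t4.5}, which is the combinatorial core. First I would observe that $\lambda,\rho$ centralize $\sigma$ while $\tau\sigma\tau^{-1}=\sigma^r$, so $\langle\sigma\rangle\cong C_m$ is normal in $G$ with complement $H=\langle\tau,\lambda,\rho\rangle$ of order $8n$, giving $G=\langle\sigma\rangle\rtimes H$. The condition $\gcd\{m,n(r-1)\}=1$ forces $m$ to be odd (otherwise $2\mid\gcd(m,r-1)$), so $\gcd(m,|H|)=\gcd(m,8n)=1$, and Theorem~\ref{t1.11}(2) reduces the problem to retract $k$-rationality of $k(\langle\sigma\rangle)$ and $k(H)$. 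The first is immediate from Theorem~\ref{t3.1} since $m$ is odd.

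The next step is to decompose $H$ as a direct product. A short calculation with $\tau\lambda\tau^{-1}=\rho$ and $\tau\rho\tau^{-1}=\lambda\rho$ inside $Q_8=\langle\lambda,\rho\rangle$ yields $\tau^3\lambda\tau^{-3}=\lambda$ and $\tau^3\rho\tau^{-3}=\rho$, hence $\tau^{3^l}$ is central in $H$. Setting $G_1'=\langle\tau^{n'},\lambda,\rho\rangle$ (of order $8\cdot 3^l$) and $C_{n'}=\langle\tau^{3^l}\rangle$, both subgroups are normal in $H$, intersect trivially, and together generate $H$, yielding $H\cong G_1'\times C_{n'}$. Since $\tau^{n'}$ acts on $Q_8$ via an order-$3$ automorphism and any two order-$3$ subgroups of $\operatorname{Aut}(Q_8)\cong S_4$ are conjugate, $G_1'$ is isomorphic to the group $G_1$ of Theorem~\ref{t4.5} (if necessary after replacing $\tau^{n'}$ by its inverse). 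Theorem~\ref{t3.2}(1) then reduces the problem further to retract $k$-rationality of $k(G_1')$ and $k(C_{n'})$; the latter is again handled by Theorem~\ref{t3.1} as $n'$ is odd.

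It remains to treat $k(G_1')$, which I would do by splitting on the characteristic. When $\operatorname{char}k\neq 2,3$ with $\zeta_{3^l},\zeta_8\in k$, then $\zeta_{8\cdot 3^l}\in k$ (because $\gcd(8,3^l)=1$), and Theorem~\ref{t4.5} yields the stronger conclusion that $k(G_1')$ is $k$-rational, hence retract $k$-rational. When $\operatorname{char}k=2$, Theorem~\ref{t4.5} is unavailable and one must instead invoke Kuniyoshi's theorem: the subgroup $Z(Q_8)=\langle\lambda^2\rangle$ remains central in $G_1'$ (as $\tau\lambda^2\tau^{-1}=\rho^2=\lambda^2$), and Theorem~\ref{t3.8} applied to $1\to Z(Q_8)\to G_1'\to G_1'/Z(Q_8)\to 1$ shows $k(G_1')$ is rational over $k(G_1'/Z(Q_8))$. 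But $G_1'/Z(Q_8)\cong V_4\rtimes C_{3^l}$, with $V_4$ abelian normal and $\gcd(4,3^l)=1$, so Theorem~\ref{t1.11}(2) together with Theorem~\ref{t3.1} (applied to $V_4$ and $C_{3^l}$ in characteristic $2$) gives retract $k$-rationality of $k(V_4\rtimes C_{3^l})$, and therefore of $k(G_1')$. The main technical obstacle is organizing the direct-product decomposition $H\cong G_1'\times C_{n'}$ (which hinges on the $\tau^3$-centralizing calculation in $Q_8$) and locating the right substitute for Theorem~\ref{t4.5} in characteristic $2$, where Kuniyoshi's theorem does the job after one quotients $G_1'$ by $Z(Q_8)$.
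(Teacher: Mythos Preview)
Your proposal is correct and follows essentially the same route as the paper: peel off $\langle\sigma\rangle$ via Theorem~\ref{t1.11}, split off the $3'$-part $\langle\tau^{3^l}\rangle$ of $\langle\tau\rangle$ to reduce to the group $G_1\cong Q_8\rtimes C_{3^l}$ of Theorem~\ref{t4.5}, and in characteristic~$2$ apply Kuniyoshi's theorem to the central $\langle\lambda^2\rangle$ followed by Theorem~\ref{t1.11} on $V_4\rtimes C_{3^l}$. The only cosmetic differences are that you phrase the second reduction as a direct product and invoke Theorem~\ref{t3.2}(1) (the paper writes it as a semidirect product and invokes Theorem~\ref{t1.11}, which is equivalent since $\langle\tau^{3^l}\rangle$ is central in $H$), and you verify that $m$ is odd arithmetically from $\gcd\{m,n(r-1)\}=1$ together with $\gcd(r,m)=1$, whereas the paper argues via the $GZ$-property that $\langle\sigma^{m/2},\lambda^2\rangle$ cannot be $C_2\times C_2$.
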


\begin{proof}
Step 1. Note that the parameter $m$ is an odd integer as in the
proof of Lemma \ref{l4.4}.

Define $H_1=\langle \sigma\rangle$, $H_2=\langle
\tau,\lambda,\rho\rangle$. Then $G\simeq H_1\rtimes H_2$, $H_1$ is
an abelian normal subgroup of $G$, and $\gcd\{|H_1|,|H_2|\}=1$.

Since $|H_1|=m$ is odd, $k(H_1)$ is retract $k$-rational by Theorem \ref{t3.1}.
We will prove that $k(H_2)$ is also retract $k$-rational.
Once it is proved, we may apply Theorem \ref{t1.11} to conclude that $k(G)$ is retract $k$-rational.

\medskip
Step 2. Note that $n=3^l\lambda'$ and $n$ is odd by assumption.
Define $\tau_1=\tau^{3^l}$ and $\tau_2=\tau^{n'}$. Then
$H_2=\langle \tau_1,\tau_2,\lambda,\rho\rangle$ with
$\tau_1^{n'}=\tau_2^{3^l}=1$ and
$\tau_1\lambda\tau_1^{-1}=\lambda$, $\tau_1\rho\tau_1^{-1}=\rho$.

Define $H_3=\langle \tau_1\rangle$, $H_4=\langle \tau_2,\lambda,\rho\rangle$.
Then $H_2\simeq H_3\rtimes H_4$, $H_3$ is an abelian normal subgroup of $H_2$,
and $\gcd\{|H_3|,|H_4|\}=1$.
$k(H_3)$ is retract $k$-rational by Theorem \ref{t3.1}.
We will prove that $k(H_4)$ is retract $k$-rational in the next step.
Once it is proved, we obtain that $k(H_2)$ is retract $k$-rational by Theorem \ref{t1.11}.

\medskip
Step 3. We will show that $k(H_4)$ is retract $k$-rational.

Note that $H_4$ is isomorphic to the group $G_1$ in Theorem \ref{t4.5}.
In fact, if $n'\equiv 1 \pmod{3}$, then $\tau_2\lambda\tau_2^{-1}=\rho$, $\tau_2\rho\tau_2^{-1}=\lambda\rho$;
thus $H_4$ is identical to $G_1$.
If $n'\equiv 2 \pmod{3}$, then $\tau_2\lambda\tau_2^{-1}=\lambda\rho$, $\tau_2\lambda\rho\tau_2^{-1}=\rho$;
replace the generators $\lambda$, $\rho$ by $\lambda$, $\lambda\rho$ and we find $H_4\simeq G_1$.

Case 1 \, $\fn{char}k\ne 2,3$ and $\zeta_{3^l},\zeta_8\in k$.

By Theorem \ref{t4.5} $k(G_1)$ is $k$-rational. Thus $k(H_4)$ is
$k$-rational. In particular, it is retract $k$-rational.

Case 2 \, $\fn{char}k=2$.

Consider the group extension $1\to \langle \lambda^2\rangle\to
H_4\to H_5\to 1$. Applying Theorem \ref{t3.7}, we find that
$k(H_4)$ is rational over $k(H_5)$.

Note that $H_5=\langle
\bar{\lambda},\bar{\rho},\bar{\tau}_2\rangle \simeq V\rtimes
C_{3^l}$ where $V=\langle\bar{\lambda},\bar{\rho}\rangle \simeq
C_2\times C_2$. Since both $k(V)$ and $k(C_{3^l})$ are retract
$k$-rational Theorem \ref{t3.1}, We find that $k(H_5)$ is also
retract $k$-rational.

Since retract rationality is stable under rational extension
\cite[Proposition 3.6; Ka4, Lemma 3.4]{Sa2}, we conclude that
$k(H_4)$ is also retract $k$-rational.
\end{proof}

%------------------------------t4.7
\begin{theorem} \label{t4.7}
Let $G$ be a group isomorphic to the group $(IV)$ in Theorem
\ref{t2.7} with the parameters $m$, $n$, $r$, ... defined there.
Write $n=3^l n'$ with $l\ge 1$ and $3\nmid n'$. If $k$ is an
infinite field with $\fn{char}k\ne 2,3$ and $\zeta_{3^l},
\zeta_8\in k$, then $k(G)$ is retract $k$-rational. In other
words, if exp$(G)=2^u3^lt$ where $u,l\ge 1$, $2\nmid t$, $3\nmid
t$, then $k(G)$ is retract $k$-rational provided that
$\zeta_{2^u}, \zeta_{3^l} \in k$.
\end{theorem}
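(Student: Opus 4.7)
The plan is to mimic the inductive reduction used for group (III) in Theorem~\ref{t4.6}: peel off the cyclic normal subgroup $\langle\sigma\rangle$, then the $n'$-part of $\langle\tau\rangle$, reducing at each stage via Theorem~\ref{t1.11} until we are left with a group isomorphic to $G_2$ of Theorem~\ref{t4.5}, whose fixed field is $k$-rational under the present hypotheses.

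First I observe that $m$ is odd: since $\lambda\sigma\lambda^{-1}=\sigma$, if $2\mid m$ the elements $\sigma^{m/2}$ and $\lambda^{2}$ would generate a subgroup $C_2\times C_2$ inside the $GZ$-group $G$, which is impossible. Set $H_1=\langle\sigma\rangle$ and $H_2=\langle\tau,\lambda,\rho,\nu\rangle$; the relations show $G=H_1\rtimes H_2$, and the hypothesis $\gcd\{m,n(r-1)\}=1$ together with $m$ odd forces $\gcd\{|H_1|,|H_2|\}=1$. By Theorem~\ref{t3.1} $k(H_1)$ is retract $k$-rational (the $2$-part of $\exp(H_1)$ is trivial), so Theorem~\ref{t1.11} reduces matters to proving $k(H_2)$ is retract $k$-rational.

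Next I split off the part of $\langle\tau\rangle$ of order prime to $3$. Let $\tau_1=\tau^{3^l}$ (of order $n'$) and $\tau_2=\tau^{n'}$ (of order $3^l$). A short computation in $Q_8=\langle\lambda,\rho\rangle$, using $\tau\lambda\tau^{-1}=\rho$ and $\tau\rho\tau^{-1}=\lambda\rho$, shows that $\tau$ acts on $Q_8$ with order exactly $3$; hence $\tau_1$ centralizes $\langle\lambda,\rho\rangle$, and $\nu\tau_1\nu^{-1}=\tau_1^{k}$ since $k^2\equiv 1\pmod n$. Thus $H_3=\langle\tau_1\rangle$ is a normal cyclic subgroup of $H_2$ of odd order $n'$ coprime to $3$, and $H_2\simeq H_3\rtimes H_4$ with $H_4=\langle\tau_2,\lambda,\rho,\nu\rangle$. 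Theorem~\ref{t3.1} gives retract $k$-rationality of $k(H_3)$, so a further application of Theorem~\ref{t1.11} reduces us to proving $k(H_4)$ is retract $k$-rational.

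Finally I would identify $H_4$ with the group $G_2$ of Theorem~\ref{t4.5}. The defining relations of $H_4$ read $\tau_2^{3^l}=\lambda^4=1$, $\lambda^2=\rho^2=(\lambda\rho)^2=\nu^2$, $\nu\lambda\nu^{-1}=\rho\lambda$, $\nu\rho\nu^{-1}=\rho^{-1}$, $\nu\tau_2\nu^{-1}=\tau_2^{k}$, together with $\tau_2\lambda\tau_2^{-1}=\tau^{n'}\lambda\tau^{-n'}$, which equals $\rho$ when $n'\equiv 1\pmod 3$ and $\lambda\rho$ when $n'\equiv 2\pmod 3$. In the first case $H_4$ is presented exactly as $G_2$; in the second, as in the treatment of group (III) in Theorem~\ref{t4.6}, I would apply an outer automorphism of $Q_8$ that rewrites $\tau_2$'s action in the $G_2$-form while preserving the $\nu$-relations. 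Once $H_4\simeq G_2$ is established, Theorem~\ref{t4.5} yields $k$-rationality of $k(H_4)$, because $\exp(G_2)=8\cdot 3^l$ and the hypothesis $\zeta_8,\zeta_{3^l}\in k$ (with $\gcd(8,3^l)=1$) gives $\zeta_{8\cdot 3^l}\in k$; the ``In other words'' formulation is immediate, since the $2$-Sylow of $G$ is $\langle\lambda,\rho,\nu\rangle\simeq Q_{16}$ of exponent $8$, so $2^u=8$. The only delicate step is producing, when $n'\equiv 2\pmod 3$, an automorphism of $Q_8$ that simultaneously rectifies the $\tau_2$-action and respects the $\nu$-relations; the remaining verifications are mechanical.
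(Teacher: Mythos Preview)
Your proposal is correct and follows essentially the same two-step reduction as the paper's proof: peel off $\langle\sigma\rangle$, then $\langle\tau^{3^l}\rangle$, and identify the remaining $H_4$ with the group $G_2$ of Theorem~\ref{t4.5}. The paper is terser and simply asserts $H_4\simeq G_2$ without the $n'\bmod 3$ case distinction; your added care there is warranted, and the ``delicate step'' you flag when $n'\equiv 2\pmod 3$ is resolved most cleanly not by changing generators of $Q_8$ but by replacing $\tau_2$ with $\tau_2^{-1}$, which restores the relations $\tau_2\lambda\tau_2^{-1}=\rho$, $\tau_2\rho\tau_2^{-1}=\lambda\rho$ while leaving $\nu\tau_2\nu^{-1}=\tau_2^{k}$ and the $\nu$-action on $Q_8$ untouched.
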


\begin{proof}
The proof is similar to that of Theorem \ref{t4.6}.

Choose $H_1=\langle\sigma\rangle$, $H_2=\langle \tau,\lambda,\rho,\nu\rangle$.
Then $G\simeq H_1\rtimes H_2$.
To prove that $k(G)$ is retract $k$-rational,
it suffices to show that so is $k(H_2)$.

As $n=3^ln'$, define $\tau_1=\tau^{3^l}$, $\tau_2=\tau^{n'}$.
Define $H_2=\langle \tau_1\rangle$, $H_4=\langle
\tau_2,\lambda,\rho,\nu\rangle$. Then $H_2\simeq H_3\rtimes H_4$.
To prove that $k(H_2)$ is retract $k$-rational, it suffices to
show that so is $k(H_4)$.

Since $H_4$ is isomorphic to the group $G_2$ in Theorem \ref{t4.5}
and exp$(G_2)=8 \cdot 3^l$, it follows that $k(H_4)$ is
$k$-rational by Theorem \ref{t4.5}. Hence $k(H_4)$ is retract
$k$-rational.

For final assertion note that, if exp$(G)=2^u3^lt$, then
exp$(G_2)=2^u3^l$.
\end{proof}

%-----------------------------t4.8
\begin{theorem} \label{t4.8}
Let $G$ be a solvable $GZ$-group of exponent $2^u3^lt$ where
$u,l\ge 0$, $2\nmid t$, $3\nmid t$. If $k$ is an infinite field
such that $\fn{char}k\ne 2,3$, $\zeta_{3^l}\in k$ and
$\zeta_{2^{u'}}$ $($where $u'=\max\{3,u\})$, then $k(G)$ is
retract $k$-rational.
\end{theorem}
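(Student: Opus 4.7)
The plan is a case analysis via Zassenhaus's classification of solvable $GZ$-groups (Theorem \ref{t2.7}): since $G$ is a solvable $GZ$-group, it must be isomorphic to one of the groups (I), (II), (III), or (IV) listed there, and in each case I would invoke the corresponding retract-rationality result already established, namely Lemma \ref{t4.1}, Lemma \ref{l4.4}, Theorem \ref{t4.6}, and Theorem \ref{t4.7}.

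The preliminary observation is that our hypothesis $\zeta_{2^{u'}}\in k$ with $u'=\max\{3,u\}$ supplies both $\zeta_{2^u}\in k$ and $\zeta_8\in k$, so in particular $k(\zeta_{2^r})$ is trivially cyclic over $k$ for any $r\le u$. The remaining task is purely bookkeeping: in each of the four presentations, I would extract the 2-part and the 3-part of $\exp(G)$ from the given parameters and verify that the roots of unity required by the corresponding earlier lemma all lie in $k$. For case (I), the group is split metacyclic and Lemma \ref{t4.1} applies immediately. For case (II), the proof of Lemma \ref{l4.4} shows that the parameter $m$ is odd, so the 2-part of $\exp(G)$ equals $2^{u_0}$ where $n=2^{u_0}v$ in the presentation; since $u_0\le u$, $\zeta_{2^{u_0}}\in k$, and Lemma \ref{l4.4} gives the conclusion. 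For case (III) the 2-Sylow is $Q_8$ so the 2-part of $\exp(G)$ is $4$, but our choice $u'\ge 3$ still guarantees $\zeta_8\in k$, which together with $\zeta_{3^l}\in k$ is exactly what Theorem \ref{t4.6} needs; since $\gcd(m,n(r-1))=1$ forces $3\nmid m$, the 3-part of $\exp(G)$ really is $3^l$. For case (IV) the 2-Sylow is generalized quaternion of order $16$, so the 2-part of $\exp(G)$ is $8$, and again $\zeta_8,\zeta_{3^l}\in k$ allow Theorem \ref{t4.7} to apply.

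There is no essential obstacle, since all the hard work (the faithful representations constructed in the proof of Theorem \ref{t4.5}, the semidirect-product reductions furnished by Theorem \ref{t1.11}, and the coprimality consequences from Theorem \ref{t2.1}) has already been packaged into the four earlier statements. The only place where one must be careful is the bookkeeping for case (III): the subtle point is that the 2-part of $\exp(G)$ is only $4$, so one might naively ask only $\zeta_4\in k$; the reason the theorem is stated with $u'=\max\{3,u\}$ is precisely to force $\zeta_8\in k$ in this case, as needed by the representation appearing in the proof of Theorem \ref{t4.5} (and illustrated as sharp by Example \ref{ex1.16}).
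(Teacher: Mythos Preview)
Your proposal is correct and follows exactly the same approach as the paper: invoke the Zassenhaus classification (Theorem \ref{t2.7}) and dispatch cases (I)--(IV) via Lemma \ref{t4.1}, Lemma \ref{l4.4}, Theorem \ref{t4.6}, and Theorem \ref{t4.7} respectively. Your added bookkeeping on the $2$-part and $3$-part of $\exp(G)$ in each case is a welcome elaboration of what the paper leaves implicit; the only inessential slip is the passing reference to Theorem \ref{t2.1}, which concerns Frobenius groups and is not actually used in the proofs of the four cited results (the needed coprimality comes directly from the presentation conditions and the $GZ$-property).
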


\begin{proof}
By Theorem \ref{t2.7}, $G$ is isomorphic to the groups (I)--(IV).
Apply Lemma \ref{t4.1}, Lemma \ref{l4.4}, Theorem \ref{t4.6} and
Theorem \ref{t4.7}.
\end{proof}

%--------------------------d4.9
\begin{defn} \label{d4.9}
For $n \ge 4$, there are two inequivalent non-split central
extensions of $S_n$ by $\bm{Z}/2\bm{Z}$. We follow the notations
of Serre \cite[pages 58, 88, 90]{GMS}. The non-split central
extension $1\to\{\pm 1\}\to \widehat{S}_n \to S_n\to 1$ defines a
double cover $\widehat{S}_n$ of $S_n$ in which the transposition
and the product of two disjoint transpositions in $S_n$ lift to
elements of order 4 of $\widehat{S}_n$. The non-split central
extension $1\to \{\pm 1\} \to \widetilde{S}_n \to S_n\to 1$
defines a double cover $\widetilde{S}_n$ of $S_n$ in which a
transposition in $S_n$ lifts to an element of order 2 of
$\widetilde{S}_n$, but a product of two disjoint transpositions
lifts to an element of order 4. The non-split central extension
$1\to \{\pm 1\}\to \widetilde{A}_n \to A_n\to 1$ defines the
(unique) double cover $\widetilde{A}_n$ of $A_n$ \cite[page
88]{Se}.
\end{defn}

%---------------------------l4.10
\begin{lemma} \label{l4.10}
{\rm (1)} $SL_2(\bm{F}_5)\simeq \widetilde{A}_5$.

{\rm (2)} Let $G$ be the group (II) in Theorem \ref{t2.8} with
$p=5$. Let $G_+$ be the subgroup of $G$ defined by $G_+=\langle
\lambda,L\rangle$. Then $G_+\simeq \widehat{S_5}$.
\end{lemma}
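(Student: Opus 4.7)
The plan is to establish (1) by recognizing $SL_2(\bm{F}_5)$ as the unique non-split double cover of $A_5$, and to establish (2) by first computing $|G_+|$ and identifying $G_+/\langle\varepsilon\rangle$ with $S_5$, and then distinguishing $\widehat{S}_5$ from $\widetilde{S}_5$ by tracking how a transposition lifts.

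For (1), recall that $SL_2(\bm{F}_5)$ is perfect, its center equals $\{\pm I\}\simeq\bm{Z}/2\bm{Z}$, and the quotient $PSL_2(\bm{F}_5)$ is the simple group of order $60$, i.e.\ $A_5$. Thus $SL_2(\bm{F}_5)$ is a central extension of $A_5$ by $\bm{Z}/2\bm{Z}$; it is non-split because $A_5\times\bm{Z}/2\bm{Z}$ is not perfect. Since the Schur multiplier of $A_5$ is $\bm{Z}/2\bm{Z}$, there is a unique non-split such extension up to isomorphism, and by Definition \ref{d4.9} this is $\widetilde{A}_5$; hence $SL_2(\bm{F}_5)\simeq\widetilde{A}_5$.

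For (2), I first verify $|G_+|=240$ and identify the quotient $G_+/\langle\varepsilon\rangle$. The element $\varepsilon$ corresponds to $-I$, so it is central in $L$, and it commutes with $\lambda$ because $\lambda^2=\varepsilon$; hence $\langle\varepsilon\rangle\simeq\bm{Z}/2\bm{Z}$ lies in the center of $G_+$. As $\lambda\notin L$ but $\lambda^2\in L$, one has $[G_+:L]=2$ and $|G_+|=240$. Now $L/\langle\varepsilon\rangle\simeq PSL_2(\bm{F}_5)\simeq A_5$, while the conjugation action of the coset $\lambda\langle\varepsilon\rangle$ on it is, via $\theta_0$, induced by the matrix $M=\bigl(\begin{smallmatrix} 0 & -1 \\ \omega & 0\end{smallmatrix}\bigr)\in GL_2(\bm{F}_5)$. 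Its determinant $\omega$ is a non-square in $\bm{F}_5^{\times}$, so $M$ represents the non-identity coset of $PSL_2(\bm{F}_5)$ in $PGL_2(\bm{F}_5)$ and thus induces an outer automorphism of $A_5$. Under the classical isomorphism $PGL_2(\bm{F}_5)\simeq S_5$, this gives $G_+/\langle\varepsilon\rangle\simeq S_5$, so $G_+$ fits in a central extension $1\to\bm{Z}/2\bm{Z}\to G_+\to S_5\to 1$.

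To distinguish $\widehat{S}_5$ from $\widetilde{S}_5$ in Definition \ref{d4.9}, I examine the lift of a transposition. The image of $\lambda$ in $S_5$ lies in the non-identity coset of $A_5$ and is an involution (since $\lambda^2=\varepsilon$ is trivial modulo the center), hence is an odd involution; in $S_5$ the only odd involutions are transpositions, as $(2,2,1)$-cycles are even. Both lifts $\lambda$ and $\varepsilon\lambda$ of this transposition satisfy $(\varepsilon\lambda)^2=\varepsilon^2\lambda^2=\varepsilon\neq 1$, so each has order $4$. Since transpositions lift to order-$2$ elements in $\widetilde{S}_5$ but to order-$4$ elements in $\widehat{S}_5$, this forces $G_+\simeq\widehat{S}_5$. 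The main obstacle in the argument is this last step: one must correctly identify $\theta_0$ as inducing the outer automorphism of $A_5$ (not an inner one) and then verify that the image of $\lambda$ is a genuine transposition, after which the $\widehat{S}_5$ versus $\widetilde{S}_5$ distinction follows from a short order computation.
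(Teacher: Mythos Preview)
Your proof is correct. Part (1) coincides with the paper's Step 1 almost verbatim. For part (2), however, you take a genuinely different route. The paper proceeds computationally: it introduces the binary icosahedral group $H$ via an explicit $2$-dimensional matrix representation over a field containing $\zeta_5$, writes down a presentation for $H$, exhibits an explicit isomorphism $SL_2(\bm{F}_5)\to H$ by matching three specific matrices $A,B,C$ to the generators $a,b,c$, computes $\theta$ on these generators, and then constructs by hand an isomorphism $G_+/\langle\varepsilon\rangle\to S_5$ sending $\bar\lambda$ to the transposition $(3~4)$. Your argument is more structural: you identify $G_+/\langle\varepsilon\rangle$ directly with $PGL_2(\bm{F}_5)$ (using that $\theta_0$ is conjugation by a matrix of non-square determinant, hence represents the nontrivial coset of $PSL_2$ in $PGL_2$), invoke the classical exceptional isomorphism $PGL_2(\bm{F}_5)\simeq S_5$, and then pin down the image of $\lambda$ as a transposition via the observation that odd involutions in $S_5$ are exactly the transpositions. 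Your route is shorter and avoids all the matrix bookkeeping of Steps 2--5, at the price of quoting the isomorphism $PGL_2(\bm{F}_5)\simeq S_5$ and the fact that the only central $\bm{Z}/2\bm{Z}$-extensions of $S_5$ up to isomorphism are $S_5\times C_2$, $\widetilde{S}_5$, and $\widehat{S}_5$; the paper's route is more self-contained but considerably longer. One small point worth making explicit: your order-$4$ computation for $\lambda$ also rules out the split extension $S_5\times C_2$ (where every lift of a transposition has order $2$), not only $\widetilde{S}_5$.
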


\begin{proof}
Step 1. Note that $1\to\{\pm 1\}\to \widetilde{A}_5 \to A_5\to 1$
is the unique non-split extension of $A_5$ by $\{\pm 1\}$
\cite[page 94; Se, page 88]{Kar}.

Since $PSL_2(\bm{F}_5)$ is a simple group of order 60,
we find that $PSL_2(\bm{F}_5)\simeq A_5$.
Hence the group extension $1\to\{\pm 1\}\to SL_2(\bm{F}_5)\to PSL_2(\bm{F}_5)\simeq A_5 \to 1$ gives a Schur covering group of $A_5$.
By the uniqueness, we conclude that $SL_2(\bm{F}_5)\simeq \widetilde{A}_5$.

\medskip
Step 2. The binary icosahedral group $H$ is defined in \cite[page
93]{Sp} as follows. For a field $k$ with $\fn{char}k\ne 2,5$ and
$\zeta_5\in k$, $H$ is the subgroup of $GL_2(k)$ defined by
$H=\langle a,b,c\rangle$ where
\[
a=-\begin{pmatrix} \zeta^3 & 0 \\ 0 & \zeta^2 \end{pmatrix}, \quad
b=\begin{pmatrix} 0 & 1 \\ -1 & 0 \end{pmatrix}, \quad
c=\frac{1}{\zeta^2-\zeta^{-2}} \begin{pmatrix} \zeta+\zeta^{-1} & 1 \\ 1 & -(\zeta+\zeta^{-1}) \end{pmatrix}
\]
with $\zeta=\zeta_5$.

Also by \cite[page 93]{Sp}, $H$ may be presented by exhibiting
generators and relations as $H=\langle a,b,c\rangle$ where
$Z(H)=\langle \varepsilon\rangle$ with $\varepsilon=a^5$ and
\[
\varepsilon^2=1,~ a^5=b^2=c^2=\varepsilon,~ bab^{-1}=a^{-1},~ bcb^{-1}=\varepsilon c,~ cac=acba,~ ca^2c=a^{-2}ca^{-2}.
\]

It follows that $H=\{a^i,ba^i,a^ica^j,a^icba^j:0\le i\le 9,0\le
j\le 4\}$ is a group of order 120.

Note that the group homomorphism
\begin{align*}
\pi:{} & H \to A_5 \\
& a \mapsto (1~2~3~4~5) \\
& b \mapsto (1~4) (2~3) \\
& c \mapsto (1~3) (2~4)
\end{align*}
defines a central extension $1\to \{\pm 1\} \to H \xrightarrow{\pi} A_5 \to 1$.
Hence $H\simeq \widetilde{A}_5$.

\medskip
Step 3. By Step 1 and Step 2, $SL_2(\bm{F}_5)\simeq
\widetilde{A}_5 \simeq H$. We will exhibit an isomorphism from
$SL_2(\bm{F}_5)$ onto $H$.

Define $A,B,C \in SL_2(\bm{F}_5)$ by
\[
A=\begin{pmatrix} -1 & 1 \\ 0 & -1 \end{pmatrix}, \quad
B=\begin{pmatrix} 2 & 1 \\ 0 & -2 \end{pmatrix}, \quad
C=\begin{pmatrix} 2 & 0 \\ 2 & -2 \end{pmatrix}.
\]

It is not difficult to verify that $SL_2(\bm{F}_5)=\langle A,B,C\rangle$ and the group homomorphism $\varphi:SL_2(\bm{F}_5) \to H$ defined by $\varphi(A)=a$,
$\varphi(B)=b$, $\varphi(C)=c$ is an isomorphism.

\medskip
Step 4. We will study the automorphism $\theta:SL_2(\bm{F}_5)\to
SL_2(\bm{F}_5)$ defined in Theorem \ref{t2.8}. Choose $\omega
=2\in \bm{F}_5$. Then $\theta$ is given by
\[
\rho\mapsto \begin{pmatrix} 0 & -1 \\ 2 & 0 \end{pmatrix} \rho
\begin{pmatrix} 0 & -1 \\ 2 & 0 \end{pmatrix}^{-1}
\]
for any $\rho \in SL_2(\bm{F}_5)$.
It follows that $\theta(A)=-A^3CA$, $\theta(B)=-C$, $\theta(C)=-B$.

\medskip
Step 5.
Now we turn to the group $G_+=\langle \lambda,L\rangle$ of this theorem.

Because $L\simeq SL_2(\bm{F}_5)\simeq H$,
we may use the presentation of $H$ in Step 2 for a presentation of $L$,
i.e.\ we write $L=\langle a,b,c \rangle$ with the relations given there.
It follows that $\theta(a)=\varepsilon a^3ca$, $\theta(b)=\varepsilon c$, $\theta(c)=\varepsilon b$ by Step 4.
Define a group $G_-$ by $1\to \{1,\varepsilon\} \to G_+ \to G_-\to 1$.
Then $G_-=\langle \bar{a},\bar{b},\bar{c},\bar{\lambda}\rangle$ with the relations induced from $a$, $b$, $c$ and the relation
$\bar{\lambda}\bar{g}\bar{\lambda}^{-1}=\theta(\bar{g})$ for any $\bar{g}\in \langle \bar{a},\bar{b},\bar{c}\rangle$.

It is easy to check the following map is a well-defined group homomorphism
\begin{align*}
\psi :{} & G_- \to S_5 \\
& \bar{a} \mapsto (1~2~3~4~5) \\
& \bar{b} \mapsto (1~4)(2~3) \\
& \bar{c} \mapsto (1~3)(2~4) \\
& \bar{\lambda} \mapsto (3~4).
\end{align*}

Moreover, $\psi$ is an onto map.
Since $|G_-|=120$, it follows that $\psi$ is an isomorphism.

Thus we get a group extension $1\to \{1,\varepsilon\}\to G_+\to
G_- \simeq S_5\to 1$. Since $\lambda\in G_+$ is mapped to
$(3~4)\in S_5$ and $\lambda$ is an element of order $4$, it
follows that $G_+\simeq \widehat{S}_5$ by Definition \ref{d4.9}.
\end{proof}

%-------------------------t4.11
\begin{theorem} \label{t4.11}
Let $G=G_1\times G_2$ be a $GZ$-group where $G_1$ is a $Z$-group,
$G_2\simeq SL_2(\bm{F}_5)$, i.e.\ $G$ is the group $(I)$ in
Theorem \ref{t2.8} with $p=5$.

Assume that $k$ is a field satisfying at least one of the
following conditions : $\fn{char}k=0$, $\fn{char}k=2$, or
$\fn{char} k=l > 0$ with $l \equiv \pm 1 \pmod{5}$. Then $k(G_2)$
is $k$-rational. If it is assumed furthermore that $k$ is an
infinite field, then $k(G)$ is retract $k$-rational.
\end{theorem}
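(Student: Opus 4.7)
My plan has two stages: a direct-product reduction, and the core proof that $k(SL_2(\bm{F}_5))$ is $k$-rational under the stated hypothesis on $\fn{char} k$. By Theorem \ref{t2.8}(I) we have $\gcd\{|G_1|,120\}=1$, so $G_1$ is a $Z$-group of odd order. Applying Lemma \ref{t4.1} (the $2$-part of $\exp(G_1)$ is $1$, so the condition on $k(\zeta_{2^r})$ is vacuously satisfied), I get that $k(G_1)$ is retract $k$-rational for any infinite $k$. Once $k(G_2)$ is shown to be $k$-rational, it is in particular retract $k$-rational, and Theorem \ref{t3.2}(1) yields retract $k$-rationality of $k(G)=k(G_1\times G_2)$. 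So the real content is proving $k$-rationality of $k(G_2)=k(SL_2(\bm{F}_5))$.

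For the rationality of $k(G_2)$, I would mimic the strategy in the proof of Theorem \ref{t4.5}. Use Lemma \ref{l4.10} to identify $G_2\simeq SL_2(\bm{F}_5)\simeq H$ (binary icosahedral), together with the explicit faithful $2$-dimensional representation $\Phi\colon H\to GL_2(k(\zeta_5))$ recorded there. Embed the space $k(\zeta_5)\cdot x_1\oplus k(\zeta_5)\cdot x_2$ into the regular representation space $\bigoplus_{g\in G_2} k(\zeta_5)\cdot x_g$ and apply Theorem \ref{t3.6} to get
\[
k(\zeta_5)(G_2)=k(\zeta_5)(x_1,x_2)^{G_2}(u_1,\ldots,u_t)
\]
with each $u_j$ fixed by $G_2$. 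Setting $x=x_1/x_2$, Theorem \ref{t3.5} reduces the invariant field to $k(\zeta_5)(x)^{G_2}(f)$ for one further fixed element $f$, and L\"uroth's theorem finishes the $k(\zeta_5)$-rationality.

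The real work is descending from $k(\zeta_5)$ to $k$ under each of the three characteristic hypotheses. When $\fn{char}k=l\equiv\pm 1\pmod 5$, we have $\sqrt 5\in\bm{F}_l\subset k$ so that $[k(\zeta_5):k]\le 2$; Wedderburn gives the triviality of the Brauer group of $\bm{F}_l$, hence the $2$-dimensional representation descends to $k$ itself and the previous argument runs over $k$ verbatim. When $\fn{char}k=0$, I would either descend directly (if $\sqrt 5\in k$) or pass to the $4$-dimensional $k$-representation obtained by restriction-of-scalars from $k(\sqrt 5)$, redoing the no-name and projective reductions of Theorems \ref{t3.6} and \ref{t3.5} in that larger-dimensional setting. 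The $\fn{char}k=2$ case is the most delicate: applying Theorem \ref{t3.8} to the central extension $1\to\{\pm I\}\to SL_2(\bm{F}_5)\to A_5\to 1$ reduces $k$-rationality of $k(SL_2(\bm{F}_5))$ to that of $k(A_5)$, which I would then attack via a faithful modular representation of $A_5$ over $\bm{F}_2$ and a direct invariant computation modeled on Case~$2$ of the proof of Theorem \ref{t4.5}.

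The main obstacle will be the $\fn{char}k=2$ case, where Maschke's theorem fails, and even after reducing to $A_5$ via Theorem \ref{t3.8} the simplicity of $A_5$ prevents any further reduction by a normal subgroup. A secondary technical hurdle is verifying that, for $\fn{char}k=0$ with $\sqrt 5\notin k$, the $4$-dimensional restriction-of-scalars representation still has the affine form required by Theorem \ref{t3.5}, and ultimately by Hajja's Theorem \ref{t3.9}, at the final step of the reduction.
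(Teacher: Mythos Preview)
Your reduction from $k(G)$ to $k(G_2)$ via $\gcd\{|G_1|,120\}=1$, Lemma~\ref{t4.1}, and Theorem~\ref{t3.2}(1) is exactly what the paper does, and your treatment of the case $\fn{char}k=l\equiv\pm 1\pmod 5$ (faithful $2$-dimensional representation over $k$, then Theorems~\ref{t3.6} and \ref{t3.5} plus L\"uroth) also coincides with the paper's argument.

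The gaps are in the other two cases, and they are not minor. For $\fn{char}k=2$ you correctly reduce to $k(A_5)$ via Theorem~\ref{t3.8}, as the paper does, but the paper then simply invokes Maeda's theorem \cite{Ma} that $k(A_5)$ is $k$-rational over any field. Your plan to prove this from scratch via a modular representation of $A_5$ over $\bm{F}_2$ and ``direct invariant computation'' is, in effect, a proposal to reprove Maeda's theorem; that is a substantial paper in its own right, and the methods of Theorem~\ref{t4.5} (which rely on diagonalising a linear action and landing in a monomial situation) do not carry over to a simple group with no useful normal subgroup.

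For $\fn{char}k=0$ with $\sqrt 5\notin k$, your restriction-of-scalars idea does not close. The resulting $4$-dimensional $k$-representation is \emph{irreducible} over $k$, so after Theorem~\ref{t3.6} you are left with $k(x_1,x_2,x_3,x_4)^{G_2}$; a single application of Theorem~\ref{t3.5} brings you at best to three projective variables, and there is no reason the residual action is monomial, so Theorem~\ref{t3.9} is inapplicable. The paper sidesteps this entirely by quoting Plans \cite[Theorem~14]{Pl}: $\bm{Q}(\widetilde{A}_5)$ is $\bm{Q}$-rational, hence $k(\widetilde{A}_5)$ is $k$-rational for every field of characteristic~$0$. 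Both external inputs (Maeda and Plans) are essential to the paper's proof; without them, the $\fn{char}k=0$ and $\fn{char}k=2$ cases remain open in your proposal.
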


\begin{proof}
Consider $k(G_2)$ first.

Note that $G_2\simeq SL_2(\bm{F}_5)\simeq \widetilde{A}_5$ by
Lemma \ref{l4.10}.

Suppose $\fn{char}k=0$. By \cite[Theorem 14]{Pl},
$\bm{Q}(\widetilde{A}_5)$ is rational. Thus for any field $k$ with
$\fn{char}k=0$, $k(\widetilde{A}_5)$ is also $k$-rational.

If $\fn{char}k=2$, consider the central extension $1\to \{\pm 1\}
\to \widetilde{A}_5\to A_5\to 1$. Applying Theorem \ref{t3.8}, we
find that $k(\widetilde{A}_5)$ is rational over $k(A_5)$. By
Maeda's Theorem \cite{Ma}, $k(A_5)$ is $k$-rational for any field.
Hence $k(\widetilde{A}_5)$ is $k$-rational.

Finally consider the case when $k$ is field with $\fn{char} k=l >
0$ with $l^2 \equiv 1 \pmod{5}$.

By \cite[page 500]{Hu}, there is a faithful representation $\Phi:
\widetilde{A}_5 \simeq SL_2(\bm{F}_5) \to GL_2(k)$ (in case $l
\equiv 1 \pmod{5}$, we may use the representation in Step 2 of the
proof of Lemma \ref{l4.10}). Let $k\cdot x_1\oplus k\cdot x_2$ be
its representation space. We can embed $k\cdot x_1\oplus k\cdot
x_2$ into the regular representation space $\oplus_{g\in
\widetilde{A}_5} k\cdot x_g$. Thus the $\widetilde{A}_5$-field
$k(x_1,x_2)$ can be embedded into $k(x_g: g\in \widetilde{A}_5)$.
Applying Theorem \ref{t3.6}, we find that
$k(\widetilde{A}_5)=k(x_g:g\in \widetilde{A}_5)^{\widetilde{A}_5}$
is rational over $k(x_1,x_2)^{\widetilde{A}_5}$. Write
$x=x_1/x_2$. We get $k(x_1,x_2)=k(x,x_2)$. The proof is similar to
the last paragraph of Case 1 in the proof of Theorem \ref{t4.5}.
We get $k(x_1,x_2)^{A_5}=k(x,x_2)^{A_5}$ is $k$-rational.

Now consider $k(G)$. Note that $\gcd\{|G_1|,|G_2|\}=1$. Thus
$|G_1|$ is an odd integer. $k(G_1)$ is retract $k$-rational by
Lemma \ref{t4.1} (with the aid of Theorem \ref{t3.3}). Using the
result that $k(G_2)$ is $k$-rational, we conclude that $k(G)$ is
retract $k$-rational by Theorem \ref{t3.2}.
\end{proof}

%---------------------------t4.12
\begin{theorem} \label{t4.12}
Let $G$ be the group $(II)$ in Theorem \ref{t2.8} with $p=5$.
Define a subgroup $G_+$ of $G$ by $G_+=\langle \lambda,L\rangle$.
If $k$ is a field with $\fn{char}k=2$ or $\fn{char}k=0$ such that
$k(\zeta_8)$ is a cyclic extension of $k$, then $k(G_+)$ is
$k$-rational. If it is assumed furthermore that $k$ is an infinite
field, then $k(G)$ is retract rational.
\end{theorem}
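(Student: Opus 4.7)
The plan has two parts: reduce retract $k$-rationality of $k(G)$ to $k$-rationality of $k(G_+)=k(\widehat{S}_5)$, and then establish the latter.

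\emph{Reduction to $k(\widehat{S}_5)$.} From the presentation of $G$ in Theorem \ref{t2.8}(II) with $p=5$, the cyclic subgroup $N:=\langle\sigma\rangle$ is normal in $G$. Because $\tau$ commutes with both $\lambda$ and every $\rho\in L$, and because $L\cap N=\{1\}$ (as $\gcd(|L|,m)=\gcd(120,m)=1$), the quotient $G/N$ decomposes as $\langle\bar\tau\rangle\times\bar G_+\simeq C_n\times\widehat{S}_5$, where $\bar G_+\simeq G_+\simeq\widehat{S}_5$ by Lemma \ref{l4.10}(2). Since $\gcd(m,n\cdot 240)=1$ follows from $\gcd(mn,120)=\gcd(m,n(r-1))=1$, Schur--Zassenhaus produces a complement, which one checks to be $H:=\langle\tau,\lambda,L\rangle=\langle\tau\rangle\times G_+$, giving $G=N\rtimes H$. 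Because $m$ and $n$ are coprime to $120$ and hence odd, Theorem \ref{t3.1} makes $k(C_m)$ and $k(C_n)$ retract $k$-rational. Assuming $k(\widehat{S}_5)$ is $k$-rational, Theorem \ref{t3.2}(1) makes $k(H)=k(C_n\times\widehat{S}_5)$ retract $k$-rational, and Theorem \ref{t1.11}(2) then gives retract $k$-rationality of $k(G)$.

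\emph{Rationality of $k(\widehat{S}_5)$.} In characteristic $2$, exploit the central extension $1\to\{\pm 1\}\to\widehat{S}_5\to S_5\to 1$: since the kernel has order equal to $\fn{char}k$, Theorem \ref{t3.8} shows that $k(\widehat{S}_5)$ is rational over $k(S_5)$, and one then invokes $k$-rationality of $k(S_5)$ (obtained by combining Maeda's theorem for $k(A_5)$ with a further Kang--Plans-type analysis of the index-$2$ extension $A_5\lhd S_5$ in characteristic $2$). In characteristic $0$ with $k(\zeta_8)/k$ cyclic, I would construct an explicit faithful representation of $\widehat{S}_5$ in the spirit of the proof of Theorem \ref{t4.5}, Case 2: using the presentation of $\widehat{S}_5$ coming from Step 5 of Lemma \ref{l4.10}, write down a faithful $4$-dimensional representation of $\widehat{S}_5$ with entries in $k(\zeta_8)$; embed its representation space into the regular representation and apply Theorem \ref{t3.6} to reduce $k(\widehat{S}_5)$ to the fixed field of a linear action on four variables; projectivize via Theorem \ref{t3.5} to pass to two variables; factor out the central $\{\pm 1\}$ so the residual action is by a quotient of $S_5$; and reduce further to a monomial action of an abelian subquotient on two variables, to which Hajja's theorem (Theorem \ref{t3.9}) applies.

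The main obstacle is the characteristic-$0$ step: the $4$-dimensional spin representation of $\widehat{S}_5$ naturally has matrix entries involving $\sqrt{2}$, that is $\zeta_8$, so the construction initially lives over $k(\zeta_8)$. The cyclicity of $k(\zeta_8)/k$---rather than the stronger $\zeta_8\in k$---is precisely what enables a Galois-equivariant descent of the full chain of reductions (projectivization, central quotient, and the final monomial reduction) back to $k$. Keeping the action of $\fn{Gal}(k(\zeta_8)/k)$ compatible with each reduction step, and in particular verifying that the diagonalization used to extract the relevant roots of unity is Galois-stable, is the delicate part of the argument.
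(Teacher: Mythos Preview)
Your reduction of $k(G)$ to $k(G_+)$ matches the paper's argument: decompose $G=\langle\sigma\rangle\rtimes H$ with $H=\langle\tau\rangle\times G_+$, then apply Theorems \ref{t3.1}, \ref{t3.2}(1), and \ref{t1.11}(2). (The paper phrases the second step as $H\simeq\langle\tau\rangle\rtimes G_+$ and uses Theorem \ref{t1.11} again, but since $\tau$ centralizes $G_+$ this is the same thing.) For $\fn{char}k=2$, your appeal to Theorem \ref{t3.8} on the extension $1\to\{\pm 1\}\to\widehat{S}_5\to S_5\to 1$ also matches the paper. One correction: your justification of the $k$-rationality of $k(S_5)$ via Maeda plus a ``Kang--Plans-type analysis of $A_5\lhd S_5$'' is misdirected---Theorem \ref{t3.8} needs the \emph{kernel}, not the quotient, to be $\bm{Z}/p\bm{Z}$---and in any case unnecessary, since $k(S_n)$ is $k$-rational over every field by the elementary symmetric function theorem.

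Where you genuinely diverge is the characteristic-zero step. The paper does not compute anything here; it simply invokes \cite[Theorem 1.4]{KZ}, which already proves that $k(\widehat{S}_5)$ is $k$-rational whenever $k(\zeta_8)/k$ is cyclic. Your proposed direct route---a faithful $4$-dimensional representation, reduction via Theorems \ref{t3.6} and \ref{t3.5} to a two-variable action, then Theorem \ref{t3.9}---is essentially the strategy carried out in \cite{KZ}, and you are right that the descent from $k(\zeta_8)$ to $k$ under the cyclicity hypothesis is the heart of the matter. But as written your sketch is not yet a proof: after projectivizing you are left with a genuinely non-abelian $S_5$-action on $k(y_1,y_2)$, and the phrase ``reduce further to a monomial action of an abelian subquotient'' conceals all of the actual work. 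One must compute explicit generators of successive fixed subfields and check that the residual action really becomes monomial in suitable coordinates; this is the substance of \cite{KZ} and cannot be replaced by a bare appeal to Theorem \ref{t3.9}.
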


\begin{proof}
By Lemma \ref{l4.10}, $G_+\simeq \widehat{S}_5$. We will show that
$k(G_+)$ is $k$-rational.

If $\fn{char}k=0$ and $k(\zeta_8)$ is cyclic over $k$, then
$k(G_+)$ is $k$-rational by \cite[Theorem 1.4]{KZ}.

If $\fn{char}k=2$, consider the group extension $1\to \{\pm 1\}\to
\widehat{S}_5 \simeq G_+\to S_5\to 1$. By Theorem \ref{t3.8},
$k(\widehat{S}_5)$ is rational over $k(S_5)$. But $k(S_5)$ is
$k$-rational. Hence so is $k(\widehat{S}_5)$.

Now we will show that $k(G)$ is retract $k$-rational.
The proof is similar to that of Theorem \ref{t4.6}.

Define $H_1=\langle \sigma\rangle$, $H_2=\langle \tau,\lambda,L\rangle$.
We obtain $G\simeq H_1\rtimes H_2$.
Since $k(H_1)$ is retract $k$-rational by Theorem \ref{t3.1},
we find that $k(G)$ is retract $k$-rational if and only if $k(H_2)$ is retract $k$-rational by Theorem \ref{t1.11}.

Define $H_3=\langle \tau \rangle$ and $G_+=\langle
\lambda,L\rangle$. Then $H_2\simeq H_3\rtimes G_+$. Using the same
arguments, we find that $k(H_2)$ is retract $k$-rational if and
only so is $k(G_+)$.

Under the assumption that (i) $\fn{char}k=0$ with $k(\zeta_8)$
cyclic over $k$, or (ii) $\fn{char}k=2$ and $k$ is infinite, it is
clear that $k(G_+)$ is retract $k$-rational because it is
$k$-rational.
\end{proof}

%----------------------------t4.13
\begin{example} \label{ex4.13}
Let $G$ and $G_+$ be the same as in Theorem \ref{t4.12}, and $k$
be a field with $\fn{char} k=0$. Checking the proof of Theorem
\ref{t4.12}, we find that $k(G)$ is retract $k$-rational if and
only if so is $k(G_+)$.

By Serre's Theorem, $\bm{Q}(\widehat{S}_5)$ is not retract
$\bm{Q}$-rational \cite[Example 33.27, page 90; KZ, Theorem
1.2]{GMS}. In particular, $\bm{Q}(G)$ is not retract
$\bm{Q}$-rational.

Let $\widehat{G}$ be a Frobenius group with Frobenius complement
$G$ defined above. We claim that $\bm{Q}(\widehat{G})$ is not
retract $\bm{Q}$-rational.

Suppose that $\bm{Q}(\widehat{G})$ is retract $\bm{Q}$-rational.
By Theorem \ref{t1.11}, we find that $\bm{Q}(G)$ would be retract
$\bm{Q}$-rational, which is a contradiction.

This example shows that the assumption $k(\zeta_8)/k$ being cyclic
is crucial in Theorem \ref{t1.8}.

By the same method, it is possible to find a solvable Frobenius
group $\widehat{G}$ whose Frobenius complement is the group
$(III)$ or $(IV)$ in Theorem \ref{t2.7} such that
$\bm{Q}(\widehat{G})$ is not retract $\bm{Q}$-rational. For, the
$2$-Sylow subgroup of $\widehat{G}$ is a generalized quaternion
group of order $\ge 16$. Apply Serre's Theorem that
$\bm{Q}(G_{16})$ is not retract $\bm{Q}$-rational (where $G_{16}$
is the generalized quaternion group of order $16$) \cite[Theorem
34.7, page 92; Ka4, Section 1]{GMS}. The details are omitted.
\end{example}

\bigskip
Now we give the proof for results in Section 1.

\begin{proof}[Proof of Theorem \ref{t1.9}]
Write $G=N\rtimes G_0$ where $N$ is abelian and $G_0$ is solvable.
By Theorem \ref{t1.6} and Theorem \ref{t2.7}, $G_0$ is isomorphic
to the groups (I)--(IV) listed in Theorem \ref{t2.7}. Thus we may
apply Theorem \ref{t4.8} to show that $k(G_0)$ is retract
$k$-rational.

As to $k(N)$, $k(N)$ is retract $k$-rational by Theorem \ref{t3.1}
if $|N|$ is odd. If $|N|$ is even and the exponent of $N$ is $2^u
n'$ (where $2 \nmid n'$), then the exponent of $G$ is $2^u m$
(where $2 \nmid m$). Since $\zeta_{2^{u'}} \in k$, it follows that
$k(N)$ is also retract $k$-rational by Theorem \ref{t3.1}.

Applying Theorem \ref{t1.11}, we find the $k(G)$ is retract $k$-rational.
\end{proof}

\begin{proof}[Proof of Theorem \ref{t1.8}]
Write $G=N\rtimes G_0$ where $G_0$ is non-solvable. By Theorem
\ref{t1.6}, Theorem \ref{t2.8} and Theorem \ref{t2.9}, $G_0$ is
isomorphic to the groups (I) or (II) listed in Theorem \ref{t2.8}
with $p=5$. Applying Theorem \ref{t4.11} and Theorem \ref{t4.12},
we find that $k(G_0)$ is retract $k$-rational.

By Theorem \ref{t2.1}, $N$ is of odd order and is abelian, because
$|G_0|$ is even and $\gcd\{|N|,|G_0|\}=1$. By Theorem \ref{t3.1},
$k(N)$ is retract $k$-rational.

By Theorem \ref{t1.11}, we find that $k(G)$ is retract
$k$-rational.
\end{proof}

\begin{proof}[Proof of Theorem \ref{t1.14}]
The proof is similar to that of Theorem \ref{t1.8}. This time we
apply only Theorem \ref{t4.11} because we are working on the
groups $(I)$ in Theorem \ref{t2.8}. Hence the result.
\end{proof}

%-------------------------------------S5
\section{Remarks about Bogomolov multipliers}

First of all, let us recall the notions of unramified Brauer
groups and Bogomolov multipliers.

Let $k\subset K$ be an extension of fields. The unramified Brauer
group of $K$ over $k$, denoted by $\fn{Br}_{v,k}(K)$ was defined
as $\fn{Br}_{v,k}(K)=\bigcap_R \fn{Image} \{ \fn{Br}(R)\to
\fn{Br}(K)\}$ where $\fn{Br}(R)\to \fn{Br}(K)$ is the natural map
of Brauer groups and $R$ runs over all the discrete valuation
rings $R$ such that $k\subset R\subset K$ and $K$ is the quotient
field of $R$.

By \cite{Sa3}, $\fn{Br}_{v,k}(K)$ is an obstruction for $K$ to be
$k$-rational. In particular, if $k$ is an algebraically closed
field and $K$ is retract $k$-rational, then $\fn{Br}_{v,k}(K)=0$.
The following result shows that $\fn{Br}_{v,k}(k(G))$ depends only
on the group $G$.

%---------------------t1.4
\begin{theorem}[{Bogomolov, Saltman \cite[Theorem 12]{Bo,Sa4}}] \label{t1.4}
Let $G$ be a finite group, $k$ be an algebraically closed field
with $\gcd \{|G|,\fn{char}k\}=1$.  Then $\fn{Br}_{v,k}(k(G))$ is
isomorphic to the group $B_0(G)$ defined by
\[
B_0(G)=\bigcap_A \fn{Ker} \{\fn{res}_G^A: H^2(G,\bm{Q}/\bm{Z})\to
H^2(A,\bm{Q}/\bm{Z})\}\
\]
where $A$ runs over all the bicyclic subgroups of $G$ $($a group
$A$ is called bicyclic if $A$ is either a cyclic group or a direct
product of two cyclic groups$)$.
\end{theorem}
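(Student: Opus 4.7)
The plan is to follow the Bogomolov--Saltman strategy, which splits the identification $\fn{Br}_{v,k}(k(G))\simeq B_0(G)$ into three conceptual stages: a birational reduction to a linear model, an identification of the relevant piece of the Brauer group with $H^2(G,\bm{Q}/\bm{Z})$, and a ramification analysis that cuts out exactly the unramified classes.

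First I would pick a faithful finite-dimensional linear $G$-representation $V$ over $k$. By the no-name lemma (in the same spirit as Theorem \ref{t3.6}) the fields $k(G)$ and $k(V)^G$ are stably $k$-isomorphic, and $\fn{Br}_{v,k}$ is a stable $k$-birational invariant by Saltman, so one may compute $\fn{Br}_{v,k}(k(V)^G)$ instead. The $G$-module $k(V)^*/k^*$ is the free abelian group on the irreducible polynomials of $k[V]$, hence a direct limit of permutation $G$-modules, and a standard analysis of the Hochschild--Serre spectral sequence for the Galois cover $k(V)/k(V)^G$ then produces a canonical injection $H^2(G,k^*)\hookrightarrow\fn{Br}(k(V)^G)$ whose image contains the unramified part. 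Since $k$ is algebraically closed with $\gcd\{|G|,\fn{char}\,k\}=1$, the torsion subgroup $\mu_\infty(k)\simeq\bm{Q}/\bm{Z}$ sits in $k^*$ with trivial $G$-action and yields $H^2(G,k^*)\simeq H^2(G,\bm{Q}/\bm{Z})$.

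The main step is then the ramification analysis. For a discrete valuation $v$ of $k(V)^G$ trivial on $k$, extend $v$ to a valuation $w$ of $k(V)$ with decomposition subgroup $D=D_w\subset G$ and inertia $I=I_w$; tameness (from $\gcd\{|G|,\fn{char}\,k\}=1$) forces $I$ to be cyclic. A direct computation identifies the residue $\partial_v$ of the class attached to $\alpha\in H^2(G,\bm{Q}/\bm{Z})$ with the transgression of $\fn{res}_G^D(\alpha)$ across the inertia extension $1\to I\to D\to D/I\to 1$; this gives the easier direction, namely that $\fn{res}_G^A(\alpha)=0$ for every abelian $A$ implies $\alpha$ is unramified. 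The hard direction, and what I expect to be the main obstacle, is the converse: one must realise every abelian subgroup $A\subset G$ as (a subgroup of) a decomposition group for some valuation in such a way that the residue detects any nonvanishing of $\fn{res}_G^A(\alpha)$. I would handle this by enlarging $V$ with enough copies of the regular representation and locating $A$-stable smooth divisors in an equivariant compactification; the linearity of the action provides abundant candidates, and by combining pairs of independent valuations one reduces the general abelian case to the rank-two situation.

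To conclude, I would descend from abelian to bicyclic subgroups. For a finite abelian group $A$, the K\"unneth formula yields $H^2(A,\bm{Q}/\bm{Z})\simeq\bigwedge^2\fn{Hom}(A,\bm{Q}/\bm{Z})$, and any nonzero element of this exterior square pairs nontrivially with some pair of characters of $A$, hence restricts nontrivially to the corresponding rank-two bicyclic subquotient. Consequently $\bigcap_A\ker(\fn{res}_G^A)=\bigcap_B\ker(\fn{res}_G^B)$ with $B$ ranging over bicyclic subgroups, and combining this with the ramification analysis gives $\fn{Br}_{v,k}(k(G))\simeq B_0(G)$.
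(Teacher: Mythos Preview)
The paper does not supply its own proof of this theorem: it is quoted as a known result of Bogomolov and Saltman, with citations \cite{Bo,Sa4}, and no proof environment follows the statement. So there is nothing in the paper to compare your argument against line by line.

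That said, your sketch is a faithful outline of the Bogomolov--Saltman argument as it appears in those references: the birational reduction via the no-name lemma, the Hochschild--Serre identification of the image of $H^2(G,\bm{Q}/\bm{Z})$ inside $\fn{Br}(k(V)^G)$, the ramification analysis relating residues to restrictions to decomposition/inertia subgroups, and the final K\"unneth reduction from abelian to bicyclic subgroups. The point you flag as the ``main obstacle'' --- realising an arbitrary bicyclic subgroup as (a subgroup of) a decomposition group at some valuation so that a nontrivial restriction is detected by a residue --- is exactly where the substance lies in \cite{Bo,Sa4}, and your description of how to handle it (enlarging the representation, locating equivariant divisors) is correct in spirit but would need the explicit toric/monomial valuations that Bogomolov constructs to be made precise. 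If you intend this as a proof rather than a summary, that step should be written out; otherwise your outline matches the cited literature and nothing more is required here since the paper itself only quotes the result.
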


Thus, if $\bm{C}(G)$ is retract $\bm{C}$-rational, then
$B_0(G)=0$.

By Theorem \ref{t1.9} and Theorem \ref{t1.8}, $\bm{C}(G)$ is
retract $\bm{C}$-rational for any Frobenius group $G$ with abelian
Frobenius kernel. In particular, $B_0(G)=0$, a phenomenon observed
by Moravec \cite[Corollary 6.6]{Mo}. In fact, our result of the
retract rationality of $\bm{C}(G)$ also implies
$H_{nr,\bm{C}}^q(\bm{C}(G), \bm{Q}/\bm{Z})=0$ for all $q \ge 3$
where $H_{nr,\bm{C}}^q(\bm{C}(G), \bm{Q}/\bm{Z})$ are the higher
unramified cohomology groups defined by Colliot-Th\'el\`ene and
Ojanguren \cite{CTO}.

\medskip
We remark that the assumption of abelian Frobenius kernels can be
waived by the following lemma.

%------------------------l5.1
\begin{lemma} \label{l5.1}
Let $G$ be a Frobenius group with Frobenius kernel $N$.
If $B_0(N_p)=0$ for all Sylow subgroups $N_p$ of $N$, then $B_0(G)=0$.
\end{lemma}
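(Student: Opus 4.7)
The plan is to reduce the statement to Sylow subgroups of $G$ and then use the structure of a Frobenius complement. The key preparatory observation is that for any prime $p$ and any Sylow $p$-subgroup $P$ of $G$, the restriction map $\operatorname{res}_G^P\colon H^2(G,\bm{Q}/\bm{Z})\to H^2(P,\bm{Q}/\bm{Z})$ is injective on the $p$-primary part (because $\operatorname{cor}_G^P\circ\operatorname{res}_G^P$ is multiplication by $[G\colon P]$, which is prime to $p$), and it carries $B_0(G)$ into $B_0(P)$ since every bicyclic subgroup of $P$ is also a bicyclic subgroup of $G$. Consequently, the $p$-primary part of $B_0(G)$ embeds into $B_0(P)$ for each prime $p$. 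Thus it suffices to prove $B_0(P)=0$ for each Sylow subgroup $P$ of $G$.

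The next step is to identify these Sylow subgroups. By Theorem \ref{t2.1} we have $\gcd\{|N|,|G_0|\}=1$, so for each prime $p$ dividing $|G|$ exactly one of the factors $|N|$ or $|G_0|$ is divisible by $p$. In the first case, a Sylow $p$-subgroup of $G$ is a Sylow $p$-subgroup $N_p$ of $N$, and the hypothesis of the lemma gives $B_0(N_p)=0$. In the second case, a Sylow $p$-subgroup of $G$ is a Sylow $p$-subgroup of $G_0$.

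For this remaining case, since $G_0$ is a Frobenius complement, Theorem \ref{t1.6}(2) (Burnside) tells us that the Sylow $p$-subgroup of $G_0$ is cyclic when $p\ge 3$ and cyclic or generalized quaternion when $p=2$. The Schur multiplier $H^2(-,\bm{Q}/\bm{Z})$ vanishes for all such groups (trivially for cyclic groups, and by a classical computation for generalized quaternion groups), so $B_0$ of them is zero a fortiori. This disposes of every Sylow subgroup of $G$, and we conclude $B_0(G)=0$.

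The only real obstacle is the passage from $B_0(G)$ to the Sylow subgroups, i.e.\ the claim that $B_0(G)_{(p)}\hookrightarrow B_0(\operatorname{Syl}_p(G))$; however, this is a standard restriction-corestriction argument combined with the obvious compatibility of Bogomolov's definition with restriction, and is already implicit in the proof of Theorem \ref{t1.4}, so it should not require significant additional work.
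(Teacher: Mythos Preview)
Your proof is correct, but it takes a genuinely different route from the paper's. The paper argues via the Frobenius kernel: it cites Moravec \cite[Corollary 6.6]{Mo} to get that the restriction $B_0(G)\to B_0(N)$ is injective, and then uses the standard Sylow injection $B_0(N)\hookrightarrow \bigoplus_p B_0(N_p)$ to conclude. You instead bypass $N$ entirely and work with the Sylow subgroups of $G$ itself: the restriction--corestriction argument gives $B_0(G)_{(p)}\hookrightarrow B_0(\mathrm{Syl}_p(G))$, and you then split into the cases $p\mid |N|$ (handled by the hypothesis) and $p\mid |G_0|$ (handled by Burnside's theorem on the Sylow structure of Frobenius complements together with the classical vanishing of the Schur multiplier for cyclic and generalized quaternion groups).

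Your approach has the advantage of being self-contained: it does not depend on the external input from \cite{Mo}, and all the ingredients---the restriction--corestriction trick, Theorem \ref{t1.6}(2), and the triviality of $H^2$ for cyclic and generalized quaternion groups---are classical. The paper's approach is shorter on the page precisely because it outsources the key injectivity statement $B_0(G)\hookrightarrow B_0(N)$ to Moravec; that fact uses the Frobenius structure in a more specific way than your Sylow argument does. In effect, your argument shows that the lemma holds for \emph{any} extension $G=N\rtimes G_0$ with $\gcd\{|N|,|G_0|\}=1$ in which $G_0$ is a $GZ$-group, not just for Frobenius groups.
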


\begin{proof}
Let $\fn{res}:B_0(G)\to B_0(N)$ be the map induced from the
restriction map $\fn{res} : H^2(G, \bm{Q}/\bm{Z}) \to H^2(N,
\bm{Q}/\bm{Z})$. It is shown in the proof of \cite[Corollary
6.6]{Mo} that $\fn{res}:B_0(G)\to B_0(N)$ is injective. Since
$B_0(N)\to \oplus_p B_0(N_p)$ is also injective where $p$ runs
over all the prime divisors of $|N|$, thus $B_0(G)\to \oplus_p
B_0(N_p)$ is injective. Done.
\end{proof}

From Lemma \ref{l5.1}, it is important to know the situations for
which $B_0(H)=0$ (where $H$ is a $p$-group). The following example
is a partial answer to it.

%-------------------------ex5.2
\begin{example} \label{ex5.2}
If $H$ is a $p$-group, we list several sufficient conditions to
ensure that $B_0(H)=0$.

First, if $H$ is abelian, then $\bm{C}(H)$ is $\bm{C}$-rational by
Theorem \ref{t3.7}. Hence $B_0(H)=0$.

For any $p$-group $H$, if $H$ is meta-cyclic or $H$ contains a cyclic subgroup of index $p^2$,
then $\bm{C}(H)$ is $\bm{C}$-rational by \cite{Ka1,Ka3}.
Hence $B_0(H)=0$.

If $H$ has an abelian normal subgroup $H_0$ such that $H/H_0$ is a
cyclic group, then $B_0(H)=0$ by \cite[Theorem 5.10]{Bo,Ka4}.

If $H=H_0\rtimes B$ where $H_0$ is abelian normal in $H$ and $B$ is bicyclic,
then $B_0(H)=0$ by \cite[Theorem 4.2]{Ba,HKK}.

If $H$ is a 2-group of order $\le 32$, then $\bm{C}(H)$ is $\bm{C}$-rational by \cite{CK,CHKP}.
Hence $B_0(H)=0$.
If $H$ is a group of order 64 and $H$ is not in the 13rd isoclinism family,
then $B_0(H)=0$ \cite[Theorem 1.14]{CHKK,HKK}.
(For the numbering of the isoclinism families, see \cite[Section 1]{HKK} for details.)

If $p$ is an odd prime number and $H$ is a $p$-group of order $\le
p^4$, then $\bm{C}(H)$ is $\bm{C}$-rational by \cite{CK}. Thus
$B_0(H)=0$. If $H$ is a group of order $p^5$ and $H$ is not in the
10th isoclinism family, then $B_0(H)=0$ \cite{HKK}.
\end{example}

\newpage
%----------------------------------------References
\renewcommand{\refname}{\centering{References}}

\end{document}